\newtheorem{theorem}{Theorem}[section] 
\newtheorem{proposition}[theorem]{Proposition} 
\newtheorem{corollary}[theorem]{Corollary} 
\newtheorem{lemma}[theorem]{Lemma}
\theoremstyle{definition}
\newtheorem{definition}[theorem]{Definition}
\newtheorem{remark}[theorem]{Remark}
\newcommand{\ir}{\mathrm{ir}}
\newcommand{\id}{\mathrm{dr}}
\newcommand{\n}{{\color{red} n}}
\newcommand{\Tam}{\operatorname{Tam}}
\newcommand{\Ri}{\operatorname{Ri}}
\newcommand{\Fa}{\operatorname{Fa}}
\newcommand{\NC}{\operatorname{NC}}
\title{Exceptional and modern intervals of the Tamari lattice}
\author{Baptiste Rognerud\footnote{Cet auteur a b\'en\'efici\'e d'un financement de l'IDEX BMM/PN/AM/N\textsuperscript{o} 2016-096c.}}
\date{}
\newcolumntype{P}[1]{>{\centering\arraybackslash}p{#1}}
\begin{document}

\maketitle


\begin{abstract}
In this article we use the theory of interval-posets recently introduced by Châtel and Pons in order to describe some interesting families of intervals in the Tamari lattices. These families are defined as interval-posets avoiding specific configurations. At first, we consider what we call \emph{exceptional} interval-posets and show that they correspond to the intervals which are obtained as images of noncrossing trees in the Dendriform operad. We also show that the exceptional intervals are exactly the intervals of the Tamari lattice induced by intervals in the poset of noncrossing partitions. In the second part we introduce the notion of \emph{modern} and \emph{infinitely modern} interval-posets. We show that the modern intervals are in bijection with the new intervals of the Tamari lattice in the sense of Chapoton. We deduce an intrinsic characterization of the new intervals in the Tamari lattice. Finally, we consider the family of what we call infinitely modern intervals and we  we prove that there are as many infinitely modern interval-posets of size $n$ as there are ternary trees with $n$ inner vertices. 
 \end{abstract}
\section{Introduction}

The family of the Tamari lattices is extremely rich from the point of view of combinatorial algebra. The Tamari lattices have two interpretations as posets of type $A$. First, it is isomorphic to the poset of tilting modules over a linearly oriented quiver of type $A$ (See \cite{happel_tilting} for more details. It seems that part of this was already observed by Gabriel \cite{gabriel}). On the other hand, Tamari lattices are part of the Cambrian lattices of type $A$ (See \cite{reading} for more details). Finally let $\mathrm{DW}_{n}$ be the distributive lattice of upper ideals in the poset of positive roots of the root system of type $A_{n-1}$. Then, the Tamari lattice of size $n$ is conjecturally deeply related to $\mathrm{DW}_{n}$ (See Conjecture $5.3$ of \cite{chapoton_modules} for more details). 

As another intriguing feature of this lattice, we have its poset of intervals. It was proved by Chapoton that there is a beautiful formula for the number of intervals in the Tamari lattice.
\[
\hbox{ Number of intervals in $\Tam_n = $} \frac{2(4n+1)!}{(n+1)!(3n+2)!}. 
\] 
It is remarkable that this formula has such a simple factorized form. More recently, in \cite{chapoton_interval}, Chapoton associated to any finite poset $P$ a polynomial in $4$ variables that enumerate the intervals of $P$ and he proved that the polynomial of the Tamari lattice has a very particular behavior (this particular behavior is not shared with generic posets). 

In this article, we continue to investigate the set of intervals of the Tamari lattices. We use the theory of interval-posets introduced by Châtel and Pons in \cite{pons_chatel} in order to study two families of intervals in the Tamari lattice. In terms of intervals, these families seem to have a rather complicated description. However,  they have a very simple description in terms of interval-posets avoiding specific configurations.

In the first part of the article we consider the family appearing as images of noncrossing trees in the dendriform operad. These objects were introduced by Chapoton in \cite{chapoton_mould}, and it was proved in \cite{chapoton_etal_mould} that they are intervals in the Tamari lattice. In Theorem \ref{image_noncrossing}, we complete this result by giving a precise description of these intervals in terms of interval-posets. By construction, they are in bijection with the noncrossing trees. In particular in the Tamari lattice of size $n$, there are $\frac{1}{2n+1} {{3n}\choose{n}}$ such intervals. We call them \emph{exceptional} because they are also in bijection with the set of exceptional sequences (up to an equivalence relation) in the bounded derived category of a linearly oriented quiver of type $A$ (See \cite{araya} and Section $3$ of \cite{chapoton_stokes} for more informations). We would need to introduce too many algebraic objects to really explain what we have in mind here, but we expect this relation with the exceptional sequences to be much more than a bijection. 

At an elementary level, the exceptional intervals turn out to have another nice description in terms of noncrossing partitions. It is well-known that the Tamari lattice is a refinement of the poset of noncrossing partitions. More precisely, if $\Tam_n$ denotes the Tamari lattice of size $n$ and $\mathrm{NC}_n$ denotes the poset of noncrossing partitions, then there is a bijection $\phi : \mathrm{NC}_n \to \Tam_n$ which is a morphism of posets. In Theorem \ref{theo_nc}, we prove that an interval of $\Tam_n$ is of the form $[\phi(\pi_1),\phi(\pi_2)]$ for an interval $[\pi_1,\pi_2]$ of noncrossing partitions if and only if it is exceptional. 

In the second part of the article, we consider the family of new intervals of the Tamari lattices. It was shown by the first author that there is a structure of operad on the set of intervals of the Tamari lattice (see \cite{chapoton_interval} for more details). The new intervals are exactly the intervals that cannot be obtained as compositions of smaller intervals. There is also a nice formula for the number of such intervals:

\[ \hbox{Number of new intervals in $\Tam_n$ }= 3 \cdot \frac{2^{n-2}(2n-2) !}{(n-1) ! (n+1) !}.\]

In Section $4$, we find the description of the interval-poset corresponding to a new interval and we deduce an intrinsic characterization of these intervals. Our main tool is what we call the rise of an interval-poset. This operation increases the size of an interval-poset by $1$, and shifts by $1$ all the increasing relations of the poset. After shifting the increasing relations by $1$, the result is not necessarily a poset since the new increasing relations may contradict the decreasing ones. We introduce the family of modern interval-posets and show that they are exactly the interval-posets for which the rise is also an interval-poset. Then, we prove that an interval is new if and only if its interval-poset is the rise of a modern interval-poset. In terms of interval, the rise sends an interval $[S_1,T_1]$ to the interval $[S,T]$ where $S$ (resp. $T$) is obtained by grafting the root of $S_1$ (resp. $T_1$) on the first (resp. second) leaf of $Y$ the binary tree of size $1$.

In the last section, we consider the interval-posets for which all the successive risings are interval-posets. We call them \emph{infinitely modern}. It seems that this family of intervals have not been considered before. Using a double statistic on the set of interval-posets, we recover the triangular decomposition of the Fuss-Catalan number $\frac{1}{2n+1} {{3n}\choose{n}}$ introduced by Aval in \cite{aval}. As corollary, we prove in Theorem \ref{theo_inf} that there are as many infinitely modern interval-posets of size $n$ as there are ternary trees with $n$ inner vertices.

\paragraph{Acknowledgement}
This work was done when I was a postdoc at the University of Strasbourg and I am grateful to Frédéric Chapton for introducing me to this subject, for his support, his comments and the many things he taught me. I am also grateful to Camille Combe for the many discussions about the last part of this article.

\section{Interval-posets, intervals of the Tamari lattices and conventions}\label{section1}

In this section we recall the construction of \emph{interval-posets} of Châtel and Pons introduced in \cite{pons_chatel} and recall that they are in bijection with the intervals of the Tamari lattice. One should note that this bijection is not canonical. More precisely, it depends on the various choices that one has to make in order to define the Tamari lattices as partial orders on sets of binary trees. This is why we start by carefully stating our conventions.

Let $n\in \mathbb{N}$. A (planar) binary tree of size $n$ is a graph embedded in the plane which is a tree, has $n$ vertices with valence $3$, $n+ 2$ vertices with valence $1$ and a distinguished univalent vertex called the \emph{root}. The other vertices of valence $1$ are called the \emph{leaves} of the tree. For the rest of the paper, when we speak about vertices of the tree, we have in mind the trivalent vertices. The planar binary trees are pictured with their root at the bottom and their leaves at the top.

With this fixed convention, we can speak about \emph{left} and \emph{right} sons (or children) of a vertex of a binary tree $T$. For us the son of a vertex is connected to his father by a single edge, if there is more than one edge we speak about a \emph{descendant}. If $v$ is a vertex of $T$, we let $T_1$ (resp. $T_2$) be the subtree with root the left son (resp. right son) of $v$. We say that $T_1$ (resp. $T_2$) is the left subtree (resp. right subtree) of $v$. 

Let $\Tam_n$ be the set of all binary trees with $n$ vertices. It is well-known that the cardinality of this set is the Catalan number $c_n = \frac{ 1 }{n+1} {{2n}\choose{n}}$. 

There is a partial order relation on $\Tam_n$ which was introduced by Tamari in \cite{tamari}. It is defined as the transitive closure of the following covering relations. A tree $T$ is covered by a tree $S$ if they only differ in some neighborhood of an edge by replacing the configuration $\begin{tikzpicture}[scale = 0.1]
        \draw (0,0)--(2,2); \draw (0,0)--(-2,2); \draw (-1,1)--(0,2);
\end{tikzpicture}$ in $T$ by the configuration  $\begin{tikzpicture}[scale = 0.1]
        \draw (0,0)--(2,2); \draw (0,0)--(-2,2); \draw (1,1)--(0,2);
\end{tikzpicture}$ in $S$. The poset $\Tam_n$ is known to be a lattice. 

A binary \emph{search tree} is a binary tree labelled by integers such that if a vertex $x$ is labelled by $k$, then the vertices of the left subtree (resp. right subtree) of $x$ are labelled by integers  less than or equal (resp. superior) to $k$. 

if $T$ is a binary tree with $n$ vertices, there is a unique labelling of the vertices by each of the integers $1,2,\cdots, n$ that makes it a binary search tree. This procedure is sometimes called the \emph{in-order traversal} of the tree. The insertion procedure is recursive. Starting at the root of the tree $T$, the algorithm is the following. 

\begin{verbatim}
1. Traverse the left subtree, i.e., call in-order(left-subtree)
2. Visit the root.
3. Traverse the right subtree, i.e., call in-order(right-subtree)
\end{verbatim}
The first vertex visited by the algorithm is labeled by $1$, the second by $2$ and so on. See figure \ref{fig_inorder} for an example. Since this labeling is canonical, we will allow ourself to identify vertices with their label.

Using this labeling, a binary tree $T$ with $n$ vertices induces a partial ordered relation $\lhd$ on the set $\{1,\cdots, n\}$ by setting $i \lhd j$ if and only if the vertex labelled by $i$ is in the subtree with root $j$. 

When $(P,\lhd)$ is a partial order on the set $\{1,\cdots, n\}$, one can use the natural total ordering of the integers $1,\cdots, n$ that we denote by $<$ to split the relations $\lhd$ in two families. Let $1 \leqslant a < b \leqslant n$ be two integers. If $a \lhd b$ we say that the relation is \emph{increasing}.  On the other hand, if $b\lhd a$, we say that the relation is \emph{decreasing}. We denote by $\mathrm{Dec}(P)$ and $\mathrm{Inc}(P)$ the set of decreasing and increasing relations of $P$. 

There is a particularly nice way to draw such a poset $(P,\lhd)$. If a relation $i \lhd j$ is increasing, draw a (red) arrow from $i$ to $j$ under the integers $i,i+1,\cdots, j$. If there is a decreasing relation $j \lhd i$ draw a (blue) arrow from $j$ to $i$ over the integers $j,j-1,\cdots, i$. See figure \ref{fig_inorder} for an example.

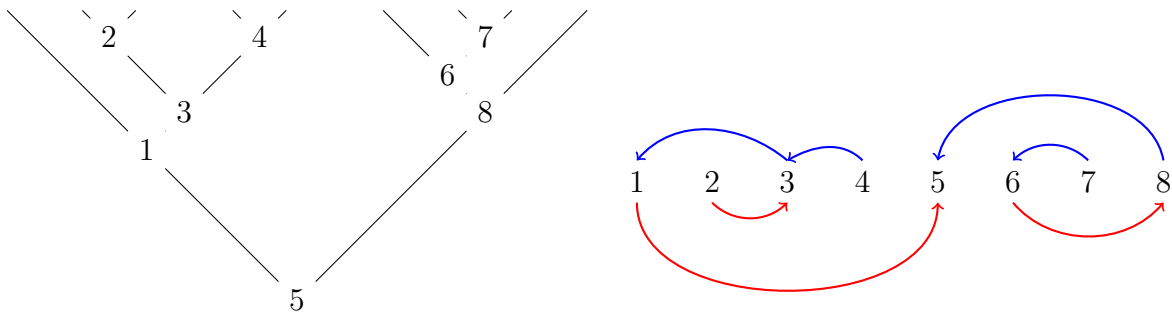
\begin{figure}[h]
\centering
\begin{tikzpicture}[scale =1]
\node (a) at (0,0) {5};
\node (c) at (-1.5,2.5) {3};
\node (b) at (-2,2) {1};
\node (d) at (-2.5,3.5) {2};
\node (e) at (-0.5, 3.5) {4};
\node (f) at (2.5, 2.5) {8};
\node (g) at (2,3) {6} ; 
\node (h) at (2.5,3.5) {7};
\node (1) at (-4,4) {}; \node (2) at (-3,4) {}; \node (3) at (-2,4) {}; \node (4) at (-1,4) {}; \node (5) at (0,4) {}; \node (6) at (1,4) {}; \node (7) at (2,4) {}; \node (8) at (3,4) {}; \node (9) at (4,4) {}; 
\draw (a)--(b)--(1)
	  (b)--(c)--(e) -- (5)
	  (e)--(4)
	  (c)--(d)--(2)
	  (d)--(3)
	 (a)--(f)--(9)
	 (f)--(g)--(6)
	 (g)--(h)--(7)
	 (h)--(8);
\end{tikzpicture}
\begin{tikzpicture}
\node (1) at (-4,4) {1}; \node (2) at (-3,4) {2}; \node (3) at (-2,4) {3}; \node (4) at (-1,4) {4}; \node (5) at (0,4) {5}; \node (6) at (1,4) {6}; \node (7) at (2,4) {7}; \node (8) at (3,4) {8};
\draw[red,->,below,thick] (2.south) to [out=-45,in=225] (3.south);
\draw[red,->,below,thick] (1.south) to  [out=-90,in=-90] (5.south);
\draw[red,->,below,thick] (6.south) to  [out=-50,in=230] (8.south);
\draw[blue,thick,->] (3.north) to  [out=140,in=50] (1.north);
\draw[blue,thick,->] (4.north) to  [out=135,in=30] (3.north);
\draw[blue,thick,->] (7.north) to  [out=135,in=45] (6.north);
\draw[blue,thick,->] (8.north) to  [out=100,in=80] (5.north);
\end{tikzpicture}
\caption{On the left, an example of the labeling of the vertices of a binary tree by calling the `in-order' algorithm. On the right, the poset induced by the tree.}\label{fig_inorder}
\end{figure}
Using this, we have a useful characterization due to Châtel, Pilaud and Pons \cite{chatel_pilaud_pons} of the partial order of the Tamari lattice in terms of increasing or decreasing relations.
\begin{proposition}\label{char_tamari}
Let $T_1$ and $T_2$ be two binary trees. Then $T_1 \leqslant T_2$ in the Tamari lattice if and only if $\mathrm{Dec}(T_1) \subseteq \mathrm{Dec}(T_2)$ if and only if $\mathrm{Inc}(T_2) \subseteq \mathrm{Inc}(T_1)$. 
\end{proposition}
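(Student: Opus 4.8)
The plan is to reformulate both inclusion conditions as a single componentwise inequality between integer vectors, which makes the structure transparent. For a binary search tree $T$ the in-order traversal guarantees that the subtree rooted at a vertex $i$ carries a contiguous block of labels $[l_i,r_i]$ with $l_i\leqslant i\leqslant r_i$; here $i\lhd j$ just means $j$ is an ancestor of $i$. Hence the decreasing relations issuing from $i$ are exactly the pairs $(i,m)$ with $i<m\leqslant r_i$, and $\mathrm{Dec}(T_1)\subseteq\mathrm{Dec}(T_2)$ is equivalent to $r^{T_1}_i\leqslant r^{T_2}_i$ for all $i$, where $r_i$ denotes the largest label of the subtree rooted at $i$ (its \emph{right reach}). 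Symmetrically, $\mathrm{Inc}(T_2)\subseteq\mathrm{Inc}(T_1)$ is the condition $l^{T_1}_i\leqslant l^{T_2}_i$ for all $i$. I would first record the easy lemma that $T$ is recovered from $(r_i)_i$: the root is $\min\{i:r_i=n\}$ (since any ancestor of $n$ other than the root lies in the root's right subtree, hence exceeds it), after which one recurses on $[1,\rho-1]$ and $[\rho+1,n]$. Finally I would prove the ``decreasing'' characterization in full and deduce the ``increasing'' one from the left–right mirror $\mu$, which is an anti-automorphism of $\Tam_n$ and, together with the relabeling $i\mapsto n+1-i$, exchanges $\mathrm{Inc}$ and $\mathrm{Dec}$.

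For the forward implication I would compute the effect of a single covering. With the notation of the rotation in the statement, let $q$ be the rotated vertex (the lower one) and $p$ its left son; the in-order sequence is unchanged, so all labels are preserved. A direct inspection shows that the only relation that changes is the right reach of $p$: before the rotation $p$'s subtree is $[l_p,q-1]$, while afterwards it becomes $[l_p,r_q]$, the reaches of $q$ and of everything else being untouched. Thus a covering increments exactly the coordinate $r_p$ (from $q-1$ up to $r_q$) and leaves the vector otherwise fixed, so $r^{T}\leqslant r^{S}$ componentwise for any cover $T\lessdot S$; transitivity gives $T_1\leqslant T_2\Rightarrow r^{T_1}\leqslant r^{T_2}$.

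The converse is where the real work lies, and I would do it by induction on $\sum_i\bigl(r^{T_2}_i-r^{T_1}_i\bigr)$. Assuming $r^{T_1}\leqslant r^{T_2}$ with $T_1\neq T_2$, set $D=\{i:r^{T_1}_i<r^{T_2}_i\}\neq\varnothing$ and let $p=\min D$. The crux is a two-part claim. First, $p$ is a left son in $T_1$: it is not the root (the root has reach $n$, so is not in $D$), and if it were a right son with parent $q'$ then $q'$'s right subtree is precisely $p$'s subtree, forcing $r^{T_1}_{q'}=r^{T_1}_p$; minimality of $p$ gives $q'\notin D$, hence $r^{T_2}_{q'}=r^{T_1}_p$, and since $p\leqslant r^{T_1}_p$ this puts $p$ inside $q'$'s $T_2$-subtree, yielding the contradiction $r^{T_2}_p\leqslant r^{T_2}_{q'}=r^{T_1}_p$. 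Second, the rotation at the parent $q=r^{T_1}_p+1$ of $p$ does not overshoot $T_2$: because $p\in D$ we have $q=r^{T_1}_p+1\leqslant r^{T_2}_p$, so $q$ is a $T_2$-descendant of $p$ and therefore $r^{T_1}_q\leqslant r^{T_2}_q\leqslant r^{T_2}_p$, which is exactly the inequality needed so that the incremented coordinate $r_p$ stays $\leqslant r^{T_2}_p$. This produces a cover $T_1\lessdot T_1'$ with $r^{T_1}\leqslant r^{T_1'}\leqslant r^{T_2}$ and a strictly smaller discrepancy, closing the induction; the process terminates at a tree with reach vector equal to $r^{T_2}$, i.e. at $T_2$ by the reconstruction lemma.

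I expect the main obstacle to be precisely this crux lemma, and in particular the verification that the \emph{smallest} deficient index is a left son so that a legitimate upward rotation is available there. Everything else—the passage between inclusions and the vectors $(r_i)$ and $(l_i)$, the injectivity of $T\mapsto(r_i)$, the bookkeeping of a single rotation, and the reduction of the increasing case to the decreasing one via $\mu$—is routine once the reformulation in terms of right reaches is in place.
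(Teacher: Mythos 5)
Your proposal is correct, but it is worth noting that the paper does not actually prove this statement at all: its ``proof'' is a citation of Proposition 40 and Remark 52 of Ch\^atel--Pilaud--Pons, where the result is obtained inside a general theory of the weak order on integer posets. You instead give a self-contained argument, and it holds up under scrutiny. Your key move --- encoding $\mathrm{Dec}(T)$ by the right-reach vector $(r_i)_i$ and $\mathrm{Inc}(T)$ by the left-reach vector, so that both inclusions become componentwise inequalities --- is essentially the classical bracket-vector characterization of the Tamari order going back to Huang and Tamari, and your two main steps are sound: a single rotation increases exactly the coordinate $r_p$ (from $q-1$ to $r_q$) and fixes all others, which gives the forward implication, while your induction for the converse correctly identifies the minimal deficient index $p$ as a left son (the right-son case does contradict minimality, exactly as you argue) and verifies that rotating at its parent $q=r^{T_1}_p+1$ keeps the reach vector below $r^{T_2}$, since $q$ lies in the $T_2$-subtree of $p$. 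The reduction of the increasing statement to the decreasing one via the left--right mirror is also legitimate and is consistent with the paper's own Lemma \ref{symmetry}. What your route buys is a short, elementary, fully self-contained proof usable without importing the machinery of the cited reference; what the citation buys the paper is access to a broader framework (the weak order on integer binary relations) in which this proposition is one instance of a more general phenomenon. One small point of care if this were to be written up: you should state explicitly that the reconstruction lemma gives injectivity of $T\mapsto(r_i)_i$, since the terminal step of your induction (``reach vectors equal implies trees equal'') silently relies on it --- but you do prove exactly that lemma, so nothing is missing.
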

\begin{proof}
See Proposition $40$ and Remark 52 of \cite{chatel_pilaud_pons}.
\end{proof}
\begin{definition}
An interval-poset $(P, \lhd) $ is a poset over the integers $1,\cdots, n$ such that 
\begin{enumerate}
\item If $a\lhd c$ and $a<c$, then for all integers $b$ such that $a<b<c$, we have $b\lhd c$.
\item If $c\lhd a$ and $a<c$, then for all integers $b$ such that $a<b<c$, we have $b\lhd a$. 
\end{enumerate}
\end{definition}
The conditions $(1)$ and $(2)$ of this definition will be referred as the \emph{interval-poset condition}. The integer $n$ in the definition is called the \emph{size} of the interval-poset. 
\begin{remark}
Let $(P,\lhd)$ be an interval-poset. If $x\lhd y$ is an increasing relation (resp. a decreasing relation), then by the interval-poset condition there is a relation $y-1\lhd y$ (resp. $x+1\lhd x$). The existence of such `small' relations will be crucial in most of our proofs on modern interval-posets. 
\end{remark}
\begin{theorem}[Châtel, Pons]\label{bij}
Let $n \in \mathbb{N}$. There is a bijection between the set of intervals in $\Tam_n$ and the set of interval-posets of size $n$. 
\end{theorem}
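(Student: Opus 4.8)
The plan is to exhibit explicit maps in both directions and verify they are mutually inverse. To an interval $[S,T]$ of $\Tam_n$ (so $S\leqslant T$) I would associate the relation $\lhd_S\cap\lhd_T$ on $\{1,\dots,n\}$, i.e.\ the pairs related in \emph{both} endpoint trees. The first thing to record is that a single tree already yields an interval-poset: under the in-order labelling the labels occurring in any subtree form a block of consecutive integers (an immediate induction on the recursive description of the traversal, since the traversal reads off the left subtree, then the root, then the right subtree), and conditions $(1)$ and $(2)$ of the definition are exactly the statement that these blocks are intervals of $\{1,\dots,n\}$. Granting this, the forward map is easy to justify: an intersection of two partial orders is again a partial order (antisymmetry and transitivity are inherited from each factor), and conditions $(1)$–$(2)$ are preserved under intersection, because if $a\lhd c$ holds in both $\lhd_S$ and $\lhd_T$ then each factor supplies $b\lhd c$ for $a<b<c$. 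Hence $\lhd_S\cap\lhd_T$ is an interval-poset.

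Next I would identify its increasing and decreasing parts. Since $S\leqslant T$, Proposition~\ref{char_tamari} gives $\mathrm{Dec}(S)\subseteq\mathrm{Dec}(T)$ and $\mathrm{Inc}(T)\subseteq\mathrm{Inc}(S)$. Splitting the intersection into increasing and decreasing relations then yields $\mathrm{Dec}(\lhd_S\cap\lhd_T)=\mathrm{Dec}(S)\cap\mathrm{Dec}(T)=\mathrm{Dec}(S)$ and $\mathrm{Inc}(\lhd_S\cap\lhd_T)=\mathrm{Inc}(S)\cap\mathrm{Inc}(T)=\mathrm{Inc}(T)$. In words, the interval-poset remembers the decreasing relations of the bottom tree and the increasing relations of the top tree. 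Because Proposition~\ref{char_tamari} forces the maps $T\mapsto\mathrm{Dec}(T)$ and $T\mapsto\mathrm{Inc}(T)$ to be injective (equality of either set in both directions gives $S\leqslant T$ and $T\leqslant S$), these two pieces of data determine $S$ and $T$ separately, so the forward map is injective.

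For the inverse, I would start from an interval-poset $P$ of size $n$, split it into $\mathrm{Dec}(P)$ and $\mathrm{Inc}(P)$, produce a tree $S_P$ with $\mathrm{Dec}(S_P)=\mathrm{Dec}(P)$ and a tree $T_P$ with $\mathrm{Inc}(T_P)=\mathrm{Inc}(P)$, and check $S_P\leqslant T_P$; the inverse map is then $P\mapsto[S_P,T_P]$. Once this is set up, the two round trips are formal. Applying the inverse to $\lhd_S\cap\lhd_T$ returns $(S,T)$ by the computation above together with injectivity; conversely $\lhd_{S_P}\cap\lhd_{T_P}$ has decreasing part $\mathrm{Dec}(S_P)=\mathrm{Dec}(P)$ and increasing part $\mathrm{Inc}(T_P)=\mathrm{Inc}(P)$ (this last identification uses $S_P\leqslant T_P$, exactly as in the second paragraph), and since $P$ is the union of its increasing and decreasing relations this equals $P$.

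The crux is therefore a single realisation lemma, which I expect to be the main obstacle: the decreasing relations of an interval-poset are exactly the decreasing relations of a unique binary tree (and dually for the increasing relations), and the two resulting trees are comparable in $\Tam_n$. Realisability is the surjectivity half of the reconstruction; I would prove that conditions $(1)$–$(2)$ together with transitivity characterise the image of $T\mapsto\mathrm{Dec}(T)$, reconstructing the tree by induction on $n$ and using the consecutive-block property and the small relations between consecutive integers guaranteed by the Remark to locate the vertex labelled $1$ and peel it off. The genuinely delicate point is the comparability $S_P\leqslant T_P$, which by Proposition~\ref{char_tamari} amounts to $\mathrm{Dec}(P)=\mathrm{Dec}(S_P)\subseteq\mathrm{Dec}(T_P)$: here one cannot treat the increasing and decreasing relations as two independent forests, because it is precisely the transitivity of the \emph{single} order $P$, linking an increasing relation to a decreasing one, that rules out the incompatibility which would otherwise give $S_P\not\leqslant T_P$. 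I would isolate this as a compatibility lemma and prove it by the same inductive peeling, tracking how removal of the extremal vertex affects both halves of $P$ simultaneously.
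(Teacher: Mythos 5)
Your proposal is correct in outline and lands on the same bijection as the paper: your forward map $\lhd_S\cap\lhd_T$ is, by the computation you give from Proposition~\ref{char_tamari}, exactly the paper's map sending $[S,T]$ to the disjoint union $\mathrm{Dec}(S)\sqcup\mathrm{Inc}(T)$, and your inverse (realize $\mathrm{Dec}(P)$ and $\mathrm{Inc}(P)$ by trees $S_P\leqslant T_P$) is the paper's backward construction described abstractly. The difference lies in what gets proved versus outsourced. The paper proves essentially nothing itself: it cites Theorem~2.8 of \cite{pons_chatel} and recalls the explicit constructions, in particular a concrete inverse (the Hasse diagrams of the two halves are forests by Lemma~2.5 of \cite{pons_chatel}, and the binary trees are built by the Knuth correspondences: for the increasing forest, right brother becomes right son and son becomes left son, dually for the decreasing one). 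Your packaging buys something the paper's recall does not: the intersection trick makes it immediate that the forward image is a poset (transitivity and antisymmetry are inherited from the factors) satisfying the interval conditions, and your injectivity argument via Proposition~\ref{char_tamari} is complete and correct. On the other hand, the two statements you defer with plans --- that the decreasing (resp.\ increasing) half of an interval-poset is realized by a unique binary tree, and the comparability $S_P\leqslant T_P$ --- are precisely the content the paper imports from \cite{pons_chatel}; both are true, and you have correctly isolated comparability as the genuine crux (it is the only point where the two halves of $P$ must interact, exactly as you say). So your argument is a faithful and slightly more self-contained reconstruction of the cited proof's skeleton, rather than a complete alternative to it: to finish it one would have to carry out the realization and compatibility lemmas, e.g.\ by the inductive peeling you describe or by the explicit forest-to-tree correspondence the paper recalls.
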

\begin{proof}
This is Theorem $2.8$ of \cite{pons_chatel}.

Since we need to use the explicit version of the theorem, let us recall the bijections. if $[S,T]$ is an interval in $\Tam_n$, we can construct an interval-poset as follows. The trees $S$ and $T$ can be seen as binary search trees and they induce two partial order relations $\lhd_S$ and $\lhd_T$. Let $P = \{1,2,\cdots, n\}$. There is a binary relation $\lhd$ on $P$ given by the disjoint union of the decreasing relations of $S$ and the increasing relations of $T$.  Then, it is proved in \cite{pons_chatel} that $(P,\lhd)$ is an interval-poset. 

Conversely, if $(P,\lhd)$ is an interval-poset of size $n$. Let $D$ be the poset obtained from $P$ by keeping only the decreasing relations of $P$. Similarly let $I$ be the poset obtained by keeping the increasing relations. By Lemma $2.5$ of \cite{pons_chatel}, the Hasse diagrams of these two posets are two forests. If we add a common root to the trees of each of these forests, we obtained two planar trees. Now, we produce binary trees starting from these planar trees.

For $I$ we recursively produce a binary tree $T$ by using the rule: right brother becomes right son and son becomes left son.

For $D$ we recursively produce a binary tree $S$ by using the rule: left brother becomes left son and son becomes right son.

The tree $S$ is smaller than $T$ for the order of the Tamari lattice, so we have an interval $[S,T]$.

These two correspondences are sometimes called the \emph{Knuth correspondences} or the \emph{natural correspondences} (see \cite{bruijn_morselt} or \cite{HPGT} for more details). 

It was proved in Theorem $2.8$ of \cite{pons_chatel} that these two constructions give two bijections inverse of each other. 
\end{proof}
Finally, we need a useful translation in the world of interval-poset of the usual left/right symmetry of trees.
\begin{lemma}\label{symmetry}
Let $[S,T]$ be an interval in  $\Tam_n$ and $P$ be its corresponding interval-poset. The interval-poset corresponding to the interval obtained by taking the left/right symmetry of $S$ and $T$ is the interval-poset $Q$ of size $n$ defined by $a \lhd_Q b \Leftrightarrow n+1 - a \lhd_{P} n+1 - b$. 
\end{lemma}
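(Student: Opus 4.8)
The plan is to verify that the left/right symmetry of binary trees corresponds, at the level of the induced partial orders, to reversing the linear order on labels via $a \mapsto n+1-a$, and then to track how this interacts with the decreasing/increasing splitting and with the two Knuth correspondences of Theorem \ref{bij}. The key observation is that the in-order traversal is itself compatible with the left/right symmetry: if $S^\vee$ denotes the mirror image of $S$, then the vertex carrying label $k$ in $S$ carries label $n+1-k$ in $S^\vee$. This is because the in-order algorithm (left subtree, root, right subtree) becomes (right subtree, root, left subtree) under the symmetry, which visits the vertices in exactly the reverse order.

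First I would establish the label-reversal statement precisely by induction on the size of the tree, or simply by observing that the in-order sequence of $S^\vee$ is the reverse of the in-order sequence of $S$. From this it follows that for the subtree relation $\lhd$ one has $i \lhd_{S} j$ if and only if $(n+1-i) \lhd_{S^\vee} (n+1-j)$: the vertex labelled $i$ lies in the subtree rooted at $j$ in $S$ exactly when the corresponding (symmetric) vertex, now labelled $n+1-i$, lies in the subtree rooted at the vertex labelled $n+1-j$ in $S^\vee$, since the symmetry is a graph automorphism preserving the ancestor relation.

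Next I would examine the effect on the increasing/decreasing splitting. The map $\sigma : a \mapsto n+1-a$ is an order-reversing involution of $\{1,\dots,n\}$, so it interchanges the roles of $<$: if $a<b$ then $\sigma(b)<\sigma(a)$. Consequently a decreasing relation of $S$ (of the form $b \lhd_S a$ with $a<b$) is sent by $\sigma$ to a relation $\sigma(b)\lhd_{S^\vee}\sigma(a)$ with $\sigma(b)<\sigma(a)$, which is again a decreasing relation; and increasing relations of $T$ are likewise sent to increasing relations of $T^\vee$. Recalling from the proof of Theorem \ref{bij} that the interval-poset $P$ attached to $[S,T]$ is the disjoint union $\mathrm{Dec}(S)\sqcup\mathrm{Inc}(T)$, I would then conclude that the interval-poset attached to $[S^\vee,T^\vee]$ is $\mathrm{Dec}(S^\vee)\sqcup\mathrm{Inc}(T^\vee)$, which by the preceding is precisely $\{\,\sigma(a)\lhd\sigma(b) : a\lhd_P b\,\}$. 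This is exactly the relation defining $Q$, namely $a\lhd_Q b \Leftrightarrow n+1-a \lhd_P n+1-b$, so $Q$ is the desired interval-poset. One should also check that $\sigma$ turns the two grafting rules of the correspondence into each other (right-son/left-son rule versus left-son/right-son rule), confirming that left/right mirroring of trees matches the label-reversal, but this is forced once the relation-level statement is in place.

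The main obstacle I anticipate is the careful bookkeeping at the level of the raw subtree relation $\lhd$ before splitting into increasing and decreasing parts: one must be certain that the symmetry of the embedded tree genuinely sends the subtree-rooted-at-$j$ structure to the subtree-rooted-at-$\sigma(j)$ structure under the relabelling, rather than merely permuting labels abstractly. Once the clean statement $i\lhd_S j \Leftrightarrow \sigma(i)\lhd_{S^\vee}\sigma(j)$ is secured, the remainder is the routine verification that $\sigma$ swaps the increasing and decreasing halves consistently and hence intertwines the two Knuth correspondences, giving the stated formula for $Q$.
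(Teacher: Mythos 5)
Your opening steps are correct and are the right way to start: the in-order traversal of the mirror tree $S^\vee$ is the reverse of the in-order traversal of $S$, so the vertex labelled $k$ in $S$ carries the label $n+1-k$ in $S^\vee$, and since mirroring preserves the ancestor relation this gives $i \lhd_S j \Leftrightarrow \sigma(i) \lhd_{S^\vee} \sigma(j)$ for $\sigma(a) = n+1-a$. (The paper states this lemma without proof, so the only comparison to be made is against correctness.)

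The bookkeeping step that follows, however, contains a genuine error, in two linked places. Since $\sigma$ reverses the order $<$, a decreasing relation $b \lhd_S a$ (with $a<b$) of $S$ becomes $\sigma(b) \lhd_{S^\vee} \sigma(a)$ with $\sigma(b) < \sigma(a)$, i.e.\ the \emph{smaller} element sits below the \emph{larger} one: by the paper's definition this is an \emph{increasing} relation of $S^\vee$, not a decreasing one as you claim. So in fact $\mathrm{Dec}(S^\vee) = \sigma(\mathrm{Inc}(S))$ and $\mathrm{Inc}(S^\vee) = \sigma(\mathrm{Dec}(S))$: the two halves are swapped, not preserved. For the same underlying reason, left/right mirroring is an \emph{anti}-automorphism of the Tamari lattice (it reverses the direction of rotations), so the image of the interval $[S,T]$ is $[T^\vee, S^\vee]$, not $[S^\vee,T^\vee]$; when $S < T$ the pair $[S^\vee,T^\vee]$ is not an interval at all. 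Your conclusion computes $\mathrm{Dec}(S^\vee) \sqcup \mathrm{Inc}(T^\vee)$, which by the correct swap equals $\sigma\bigl(\mathrm{Inc}(S)\bigr) \sqcup \sigma\bigl(\mathrm{Dec}(T)\bigr)$ --- in general this is not $\sigma(P)$ and not an interval-poset. Your two mistakes cancel to land on the stated formula, but the argument as written is invalid. The correct chain is: the mirrored interval is $[T^\vee, S^\vee]$, whose interval-poset is $\mathrm{Dec}(T^\vee) \sqcup \mathrm{Inc}(S^\vee) = \sigma\bigl(\mathrm{Inc}(T)\bigr) \sqcup \sigma\bigl(\mathrm{Dec}(S)\bigr) = \sigma(P)$, which is exactly the poset $Q$ of the statement.
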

\section{Exceptional intervals of the Tamari lattice}

In \cite{chapoton_mould}, Chapoton introduced an operad $\mathbf{NCP}$ of \emph{noncrossing plants}. A non-crossing plant is a generalization of a noncrossing tree. Since we will not work with them, we refer the reader to the original article for a precise definition. We will only use that noncrossing trees are particular examples of noncrossing plants. It was proved that this operad (in the category of sets) is a sub-operad of $\mathbf{Dend}$, the Dendriform operad. Then, it was proved in \cite{chapoton_etal_mould} that the image of a noncrossing tree in $\mathbf{Dend}$ is of the form $\sum_{t\in I} t$ where $I$ is an interval in the Tamari lattice. An interval that appears as such image of a noncrossing tree is called \emph{exceptional}. In this section, we reprove and precise this result by giving an explicit description of the exceptional intervals of the Tamari lattice in terms of the interval-posets. Since they are in bijection with the noncrossing trees, the number of exceptional intervals in the Tamari lattice of size $n$ is $\frac{1}{2n+1}{{3n}\choose{n}}$. 

There is another well known family of intervals of the Tamari lattice of this size: it is classical that the Tamari order is a refinement of the usual partial ordering of the noncrossing partitions (see Section $2$ \cite{bernardi_bonichon} for more details). This implies that an interval in the poset of noncrossing partitions gives an interval in the Tamari lattice. By a result of Kreweras (\cite{kreweras}) or a bijection of Edelman (\cite{edelman}), the number of intervals of noncrossing partitions of size $n$ is $\frac{1}{2n+1}{{3n}\choose{n}}$. At the end of this section, we show that this family coincide with the family of exceptional intervals. 

\subsection{Exceptional intervals and noncrossing trees}

A \emph{noncrossing tree} in the regular $n+1$-gon is a set of edges between the vertices of the polygon with the following properties 
\begin{itemize}
\item edges do not cross pairwise,
\item any two vertices are connected by a sequence of edges,
\item There is no loop made of edges.
\end{itemize}
The boundary edges are allowed in the set. It is classical that the number of noncrossing trees in the regular $n+1$-gon is $\frac{1}{2n+1}{{3n}\choose{n}}$.

Given two noncrossing trees $f$ and $g$ in regular polygons and a side $i$ of the regular gon containing $f$, one can define the composition $f\circ_{i} g$ in the grafting of the polygons containing $f$ and $g$. This is defined as the union of the two trees, with some modifications along the grafting diagonal. If the diagonal is present in both $f$ and $g$, then it is kept in $f\circ_i g$. If it is present in exactly one of the two trees, then it is not kept in $f\circ_i g$. Otherwise, the result is not a noncrossing tree. One `denominator' diagonal is added and the result is a noncrossing plant. See Section $5.2$ of \cite{chapoton_mould} for more details.

It was shown in Paragraph $5.1$ of \cite{chapoton_etal_mould} that one can construct a poset from a noncrossing tree. Let us recall this construction.

Let $T$ be a noncrossing tree in a \emph{based} regular $n+1$-gon. Here by based we mean that we choose one side of the gon and call it the base. We can label the edges of the $n+1$-gon by assigning the number $0$ to the base, and then assigning the numbers $1$ to $n$ to the edges in a clockwise order. If an edge of $T$ is a boundary edge we assign to it the number of the boundary edge. Otherwise, the label of the edge of the noncrossing tree is the number of the unique open boundary edge that it separates from the base. Then, we set $i \lhd_{T} j$ if the edge $i$ is separated from the base by the edge $j$. An example is given in Figure \ref{fig1}.  
\begin{figure}
\begin{tikzpicture}[scale=1.3]
\node [draw, minimum size=5cm,regular polygon,regular polygon sides=12,rotate=180] (a) {};
\foreach \n in {2,3,11,12}{
   \pgfmathsetmacro\result{12 - (\n -1)}
    \node[below] at (a.side \n) {\pgfmathprintnumber{\result}};
}
\foreach \n in {5,6,...,9}{
   \pgfmathsetmacro\result{12 - (\n -1)}
    \node[above] at (a.side \n) {\pgfmathprintnumber{\result}};
}
\node[left] at (a.side 10) {3};
\node[right] at (a.side 4) {9};
\node[below] at (a.side 1) {0};
\draw[red,very thick] (a.corner 10) -- (a.corner 7);
\draw[red,very thick] (a.corner 10) -- (a.corner 9);
\draw[red,very thick] (a.corner 10) -- (a.corner 8);
\draw[red,very thick] (a.corner 11) -- (a.corner 10);
\draw[red,very thick] (a.corner 11) --(a.corner 12);
\draw[red,very thick] (a.corner 11) -- (a.corner 3);
\draw[red,very thick] (a.corner 3) -- (a.corner 2);
\draw[red,very thick] (a.corner 3) -- (a.corner 1);
\draw[red,very thick] (a.corner 7) -- (a.corner 4);
\draw[red,very thick] (a.corner 4) -- (a.corner 5);
\draw[red,very thick] (a.corner 7)-- (a.corner 6);
\end{tikzpicture}
\begin{tikzpicture}[scale=1.3]
\node [draw=none, minimum size=5cm,regular polygon,regular polygon sides=12,rotate=180] (a) {};
\node[left] at (a.side 10) {3};
\node[right] at (a.side 4) {9};
\draw[very thick] (a.corner 10) -- node[below] {6} (a.corner 7);
\draw[very thick] (a.corner 10) -- node[left] {4} (a.corner 9);
\draw[very thick] (a.corner 10) -- node[above] {5} (a.corner 8);
\draw[very thick] (a.corner 11) -- (a.corner 10);
\draw[very thick] (a.corner 11) -- node[below] {2} (a.corner 12);
\draw[very thick] (a.corner 11) --  node[above] {10} (a.corner 3);
\draw[very thick] (a.corner 3) --  node[below] {11} (a.corner 2);
\draw[very thick] (a.corner 3) -- node[above] {1} (a.corner 1);
\draw[very thick] (a.corner 7) --  node[below] {8} (a.corner 4);
\draw[very thick] (a.corner 4) --  (a.corner 5);
\draw[very thick] (a.corner 7)--  node[above] {7} (a.corner 6);
\end{tikzpicture}
\begin{tikzpicture}[scale=1.5]
 \node (a) at (0,0) {$1$};
 \node (b) at (-1,0.75) {$3$};
 \node (c) at (0,0.5) {$10$};
 \node (d) at (1,0.5) {$2$};
  \node (e) at (-0.5,1) {$6$};
  \node (f) at (0.5,1) {$8$};
  \node (g) at (-0.5,1.5) {$5$};
   \node (h) at (0.25,1.5) {$7$};
  \node (i) at (0.75,1.5) {$9$};
  \node (j) at (-0.5,2) {$4$};
  \node (k) at (1.5,0) {$11$};
  \draw (c) -- (b)
  	   (a) -- (c) -- (e) -- (g) -- (j)
	   (c)--(f)--(h)
	   (f)--(i)
	   (a)--(d);
 \end{tikzpicture}

\caption{On the left, an example of a noncrossing tree in a $12$-gon and on the middle the induced labelling of the noncrossing tree. On the right, the Hasse diagram of the corresponding poset where the maximal elements are $1$ and $11$. }\label{fig1}
\end{figure}
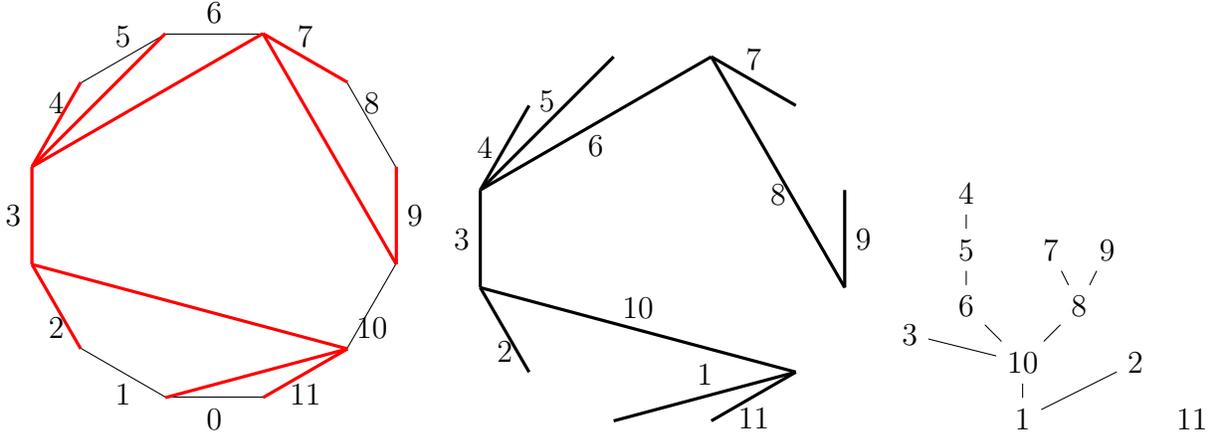

\begin{lemma}\label{lem1}
Let $T$ be a non-crossing tree in a based regular $n+1$-gon. Then, the poset $([1,n],\lhd_{T})$ is an interval-poset.
\end{lemma}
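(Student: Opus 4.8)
The plan is to reformulate everything in terms of the arcs that the edges of $T$ cut off, and to reduce both interval-poset conditions to a single nesting statement. For an edge $e$ of $T$, let $A(e) \subseteq \{1,\dots,n\}$ be the set of labels of the boundary sides lying in the region that $e$ separates from the base (for a boundary edge, $A(e)$ is just its own label). First I would record three elementary facts. (a) Since the non-base sides are numbered $1,\dots,n$ consecutively in clockwise order and the region cut off by a chord $e$ meets the boundary in a contiguous arc avoiding the base, the set $A(e)$ is an interval of integers $\{m_e,\dots,M_e\}$. (b) The label $\ell(e)$ of $e$ lies in $A(e)$: for a boundary edge this is immediate, and for a diagonal the label is by definition the open side that $e$ directly separates, which lies in the region cut off by $e$. (c) Using that $T$ is non-crossing, for any two edges the arcs $A(e),A(e')$ are either nested or disjoint, and the defining relation is exactly $i \lhd_T j \iff A(i) \subsetneq A(j)$.

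The key step is the following claim: \emph{if the integer $b$ belongs to the interval $A(j)$, then $A(b) \subseteq A(j)$, and hence $b \lhd_T j$ whenever $b \ne j$.} Indeed, $b \in A(j)$ means that the side $s_b$ whose label is $b$ lies in the region cut off by $j$; but $s_b$ is, by construction, a side that the edge $b$ directly separates from the base, so $s_b \in A(b)$ as well. As both $A(b)$ and $A(j)$ contain $s_b$, the non-crossing property forces them to be nested; since $b$ is the closest (minimal) separator of the side $s_b$, we cannot have $A(j) \subsetneq A(b)$, which leaves $A(b) \subseteq A(j)$.

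Granting the claim, both conditions follow uniformly. For condition (1), suppose $a \lhd_T c$ with $a < c$; then $A(a) \subsetneq A(c)$, and since the labels $a=\ell(a)$ and $c=\ell(c)$ both lie in $A(c)$ we get $m_c \leqslant a < c \leqslant M_c$, so any $b$ with $a<b<c$ satisfies $b \in A(c)$, and the claim gives $b \lhd_T c$. Condition (2) is identical with the roles exchanged: from $c \lhd_T a$ and $a<c$ one obtains $a,c \in A(a)=\{m_a,\dots,M_a\}$, so every intermediate $b$ lies in the interval $A(a)$ and hence $b \lhd_T a$.

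I expect the main obstacle to be items (b), (c) and the claim, namely making the bijective labelling by ``the unique open side directly separated from the base'' precise enough to prove that $b \in A(j)$ forces $A(b)\subseteq A(j)$. Once the non-crossing property is translated into the nesting/disjointness of arcs and each label is identified with a side that its edge minimally separates, the contiguity of the intervals $A(\cdot)$ does all the remaining work and the two interval-poset conditions drop out together.
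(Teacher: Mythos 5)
Your proof is correct and follows essentially the same route as the paper's: both encode each edge of $T$ by the interval of boundary labels it cuts off from the base, use the non-crossing property to get that these intervals are pairwise nested or disjoint, and exploit the two facts that each label lies in its own interval and that the labelling is by \emph{direct} (minimal) separation to rule out the wrong direction of nesting. The only difference is organizational: you isolate the implication $b \in A(j) \Rightarrow A(b) \subseteq A(j)$ as a single claim and apply it symmetrically to both interval-poset conditions, whereas the paper runs the same underlying argument through a two-case analysis on the position of the intermediate boundary side $j$ (its Figure 2), so your write-up is a slightly cleaner packaging of the same proof.
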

\begin{proof}
We label the boundary edges of a based regular $n+1$-gon as above. We use the notation $[i_1,i_2]$ where $i_1 \leqslant i_2$ for the edge that goes from the left side of the boundary edge $i_1$ to the right side of the boundary edge $i_2$. For example, in Figure \ref{fig1}, the edge with label $6$ corresponds to $[4,6]$ and the edge labelled by $4$ corresponds to $[4,4]$. Note that by construction of our labelling, the edge $[i_1,i_2]$ labelled by the number $i$ separates the boundary edge $i$ of the regular $n+1$-gon from the base. In particular this implies that $1\leqslant i_1 \leqslant i$ and $i\leqslant i_2 \leqslant n$.   

We want to check that the poset $(P_T,\lhd)$ is an interval poset. Let $0<i < j < k \leqslant n$ such that $i\lhd k$. This means that the edge $[i_1,i_2]$ labelled by $i$ is separated from the base by the edge $[k_1,k_2]$ labelled by $k$. Since, $k$ separates $i$ from the base and $T$ is a noncrossing tree, it is easy to check that the only possibility is to have 
 $$ k_1 \leqslant i_1 \leqslant i \leqslant i_2 \leqslant k \leqslant k_2. $$ Now, the boundary edge $j$ is between $i$ and $k$, so either it is before $i_2$ or after. Since $T$ is a noncrossing tree the edge $j$ cannot cross the edges $i$ and $k$. So it is easy to see that in the first case $k$ and $i$ separate $j$ from the base, and in the second case $k$ separates $j$ from the base. In particular, we have $j \lhd k$. See Figure \ref{fig2} for an illustration where the letter $j$ is used for the first case and the letter $J$ for the second. The case where $k\lhd i$ is similar and is illustrated in the right part of Figure \ref{fig2}.

\end{proof}
\begin{figure}
\centering \begin{tikzpicture}[scale=2]
\node [draw, minimum size=5cm,regular polygon,regular polygon sides=18] (a) {};
\node[below] at (a.side 10) {$0$};
\node[below] at (a.side 8) {$k_1$};
\node[left] at (a.side 5) {$i$};
\node[left] at (a.side 6) {$i_1$};
\node[above] at (a.side 18) {$i_2$};
\draw[very thick] (a.corner 7) -- node[above] {i} (a.corner 18);
\node[right] at (a.side 14) {$k$};
\node[below] at (a.side 13) {$k_2$};
\draw[very thick] (a.corner 9) -- node[above] {k} (a.corner 13);
\node[above] at (a.side 3) {$j$};
\draw[thick, red] (a.corner 1) -- node[above]{j} (a.corner 5);
\node[right] at (a.side 16) {$J$};
\draw[thick, dashed, blue] (a.corner 18) -- node[left]{J} (a.corner 14); 
\end{tikzpicture}
\begin{tikzpicture}[scale=2]
\node [draw, minimum size=5cm,regular polygon,regular polygon sides=18] (a) {};
\node[below] at (a.side 10) {$0$};
\node[left] at (a.side 5) {$i$};
\node[left] at (a.side 6) {$i_1$};
\node[right] at (a.side 14) {$k$};
\node[below] at (a.side 11) {$i_2$};
\draw[very thick] (a.corner 7) -- node[above] {i} (a.corner 11);
\node[right] at (a.side 13) {$k_2$};
\node[above] at (a.side 18) {$k_1$};
\draw[very thick] (a.corner 1) -- node[left] {k} (a.corner 13);
\node[right] at (a.side 16) {$J$};
\draw[thick, dashed, blue] (a.corner 18) -- node[left]{J} (a.corner 14); 
\node[above] at (a.side 3) {$j$};
\draw[thick, red] (a.corner 2) -- node[above]{j} (a.corner 6);
\end{tikzpicture}
\caption{One the left the case $i<j<k$ and $i\lhd k$. On the right $i<j<k$  and $k \lhd i$.}\label{fig2}
  \end{figure}

\begin{lemma}
Let $T$ be a non-crossing tree in a based regular $n+1$-gon. Then the Hasse diagram of the interval-poset $([1,n],\lhd_{T})$ does not contain any configuration of the form $y\to z$ and $y\to x$ where $x<y<z$. 
\end{lemma}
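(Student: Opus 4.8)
The plan is to reduce everything to the inclusion order between the chords of $T$ and then to exploit the noncrossing condition. First I would record the translation that is already implicit in the proof of Lemma \ref{lem1}. Write the edge labelled $a$ as the chord $[a_1,a_2]$ (the chord joining the left end of boundary edge $a_1$ to the right end of boundary edge $a_2$), and similarly for $b$. By definition $a\lhd_{T}b$ means that the edge $b$ separates the edge $a$ from the base, which happens exactly when the region cut off from the base by $b$ contains the region cut off by $a$, i.e. $[a_1,a_2]\subseteq[b_1,b_2]$ as intervals of boundary edges. Since a chord is determined by its two endpoints, distinct labels give distinct chords and hence distinct intervals, so this containment is strict whenever $a\neq b$.

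Next I would fix $y$ and show that the set $U_y=\{\,w : y\lhd_{T}w\,\}$ is totally ordered by $\lhd_{T}$. Every $w\in U_y$ is a chord whose interval contains $[y_1,y_2]$, so for any two $w,w'\in U_y$ the intervals $[w_1,w_2]$ and $[w'_1,w'_2]$ overlap (both contain $[y_1,y_2]$). Two chords of a noncrossing tree never cross; and two chords whose intervals overlap without one containing the other would have interleaving endpoints, hence would cross. Therefore their intervals must be nested, and by the translation above one of $w,w'$ is $\lhd_{T}$‑below the other. Thus $U_y$ is a chain, so it has a unique minimal element, and consequently $y$ is covered by at most one element of the poset $([1,n],\lhd_{T})$.

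The conclusion is then immediate. Suppose for contradiction that the Hasse diagram contained a configuration $y\to z$ and $y\to x$ with $x<y<z$. Reading off the drawing conventions, the arrow $y\to z$ with $y<z$ is an increasing cover, so $y\lhd_{T}z$ is a cover relation, while $y\to x$ with $x<y$ is a decreasing cover, so $y\lhd_{T}x$ is a cover relation. As $x\neq z$ this produces two distinct covers of $y$, contradicting the previous paragraph. The only genuinely delicate point I expect is the nesting step: one must state the crossing/overlap dichotomy correctly for the specific chords $[a_1,a_2]$ and check that no wrap‑around through the base interferes, which holds because all chords in $U_y$ bound regions lying on the side away from the base, so their corner‑spans sit inside a single arc and the one‑dimensional interleaving criterion applies. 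Everything else follows formally once the equivalence $a\lhd_{T}b\Leftrightarrow[a_1,a_2]\subseteq[b_1,b_2]$ is in place.
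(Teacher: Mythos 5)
Your proof is correct and takes essentially the same approach as the paper: both translate $a \lhd_T b$ into containment of chord intervals $[a_1,a_2] \subseteq [b_1,b_2]$ and then use the noncrossing condition to force any two chords above $y$ to be nested, hence comparable, so that the two alleged covers of $y$ cannot both be covers. Your packaging of this as ``the up-set $U_y$ is a chain'' is a slight (correct) strengthening of what the paper states, but the key nesting argument is identical.
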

\begin{proof}
Let us assume that we have integers $x < y < z$ such that $y\lhd x$ and $y \lhd z$. This means that the edge $x = [x_1,x_2]$ separates $y = [y_1,y_2]$ from the base. As in the proof of Lemma \ref{lem1}, this implies that 
$$ x_1 \leqslant y_1 \leqslant y_2 \leqslant x_2.$$ 
Similarly, the edge $z = [z_1,z_2]$ separates $y$ from the base, so we have
$$ z_1 \leqslant y_1 \leqslant y_2 \leqslant z_2.$$
Since $T$ is a non-crossing tree, if $x_1 \leqslant z_1$, then necessarily $x_2 \geqslant z_2$. In this case the edge $x$ separates the edge $z$ from the base and we have $z \lhd x$. If $z_1 \leqslant x_1$, then $x_2 \leqslant z_2$ and we have $x\lhd z$. In both cases, we see that one of two relations $y \lhd x$ and $y\lhd z$ is not a cover relation. In particular the configuration $y\to z$ and $y\to x$ does not appear in the Hasse diagram of the poset. 
\end{proof}
\begin{definition}\label{exc}
An interval-poset whose Hasse diagram does not contain any configuration of the form $y\to z$ and $y\to x$ where $x<y<z$ is called an \emph{exceptional} interval-poset.
\end{definition}
If $(P,\lhd)$ is an interval-poset over the integers $[1,n]$ we can construct a graph $G_P$ in a \emph{based} regular $n+1$-gon by using the following procedure which is nothing but a reformulation in terms of interval-posets of the construction explained in Section $5.1$ of \cite{chapoton_etal_mould}. Let us start by labelling the boundary edges of the polygon as above. Then for an integer $v$ consider the poset $\{ x\in [1,n]\ ;\ x\lhd v\}$. This poset has a minimal element (for the usual order relation $<$) $v_1$ and a maximal element $v_2$. We associate to $v$ the edge in the polygon from the left side of $v_1$ to the right side of $v_2$.
\begin{lemma}\label{lem2}
If $(P,\lhd)$ is an exceptional interval-poset on the integers $[1,n]$, then the graph $G_P$ is a noncrossing tree. 
\end{lemma}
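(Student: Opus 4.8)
The plan is to verify directly the three defining properties of a noncrossing tree for $G_P$: that its edges pairwise do not cross, that it is acyclic, and that it is connected. The first thing I would record is that, thanks to the interval-poset conditions, the set $\{x : x\lhd v\}$ is not merely bounded by $v_1$ and $v_2$ but equals the \emph{full} integer interval $[v_1,v_2]$. Indeed, if $v_1<v$ then $v_1\lhd v$ is an increasing relation and condition $(1)$ forces $b\lhd v$ for every $b$ with $v_1<b<v$; symmetrically, $v_2\lhd v$ with $v<v_2$ is decreasing and condition $(2)$ fills in $[v,v_2]$. Hence $\{x:x\lhd v\}=[v_1,v_2]$ exactly, so the edge attached to $v$ is the chord cutting off precisely the boundary edges $v_1,\dots,v_2$. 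Two such chords cross if and only if the intervals $[v_1,v_2]$ and $[w_1,w_2]$ \emph{properly overlap}, i.e. $v_1<w_1\leqslant v_2<w_2$ up to symmetry; if one interval contains the other or they are disjoint the chords are nested or share at most a corner. I would also note that the $n$ edges are pairwise distinct: since the down-set of $v$ is the whole interval $[v_1,v_2]$, the pair $(v_1,v_2)$ determines $\{x:x\lhd v\}$, and $\{x:x\lhd v\}=\{x:x\lhd w\}$ forces $v=w$ by antisymmetry of $\lhd$. Thus $G_P$ has exactly $n$ distinct edges on the $n+1$ corners of the polygon.

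The heart of the argument, and the only place the exceptional hypothesis is used, is the no-crossing step. Suppose for contradiction that the chords of $v$ and $w$ cross, so after symmetry $v_1<w_1\leqslant v_2<w_2$. Proper overlap forces $v$ and $w$ to be $\lhd$-incomparable (comparable elements have nested down-sets), so neither lies in the other's down-set; consequently the intersection $M=\{x:x\lhd v\}\cap\{x:x\lhd w\}=[w_1,v_2]$ is a nonempty down-set containing neither $v$ nor $w$. I would pick a $\lhd$-maximal element $y$ of $M$ and follow saturated chains from $y$ up to $v$ and up to $w$. The first step of the chain toward $v$ is a cover $y\lessdot u$ with $u\lhd v$; maximality of $y$ in $M$ forces $u\notin\{x:x\lhd w\}$ (else $u\in M$ would contradict maximality), so $u$ lies in $[v_1,v_2]\setminus[w_1,w_2]=[v_1,w_1-1]$, whence $u<y$. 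Symmetrically the first step toward $w$ is a cover $y\lessdot u'$ with $u'\in[v_2+1,w_2]$, so $u'>y$. Then $y\to u$ and $y\to u'$ with $u<y<u'$ is exactly the forbidden configuration of Definition \ref{exc}, contradicting that $P$ is exceptional. Therefore no two chords cross.

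It remains to see that the non-crossing graph $G_P$ is a tree, for which I would prove it is acyclic and then conclude by counting. A cycle of pairwise non-crossing chords in a convex polygon must join a set of corners $L_{a_0},\dots,L_{a_{k-1}}$ (with $a_0<\dots<a_{k-1}$) in their cyclic order, so the cycle consists of the chords with intervals $[a_0,a_1-1],[a_1,a_2-1],\dots,[a_{k-2},a_{k-1}-1]$ together with the long chord of interval $[a_0,a_{k-1}-1]$; here $k\geqslant 3$, since distinct edges forbid a double edge. The long interval is tiled by the short ones, so the element $v$ with $\{x:x\lhd v\}=[a_0,a_{k-1}-1]$ satisfies $v\in[a_j,a_{j+1}-1]$ for some $j$. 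But $[a_j,a_{j+1}-1]$ is the down-set of the corresponding short-chord element $v'$, and the containment $[a_j,a_{j+1}-1]\subsetneq[a_0,a_{k-1}-1]$ yields $v\lhd v'$ (from $v\in\{x:x\lhd v'\}$) and $v'\lhd v$ (from $v'\in\{x:x\lhd v'\}\subseteq\{x:x\lhd v\}$) simultaneously, forcing $v=v'$ and hence the absurd equality of a proper subinterval with the whole interval. Thus $G_P$ is acyclic. A forest with $n+1$ vertices and $n$ edges has $(n+1)-n=1$ connected component, so $G_P$ is connected; being connected and acyclic it is a tree, and combined with the no-crossing step it is a noncrossing tree.

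I expect the no-crossing step to be the main obstacle. The crossing condition is purely about the numerical endpoints of the chords, whereas the exceptional hypothesis is a statement about cover relations in the Hasse diagram, so the real work is producing genuine cover relations out of the overlap. The device of taking a $\lhd$-maximal element of the intersection $M$ and reading off the first steps of the two saturated chains is what converts the overlap into the forbidden $y\to x$, $y\to z$ pattern; pinning down the integer positions $u<y<u'$ (using $u\notin\{x:x\lhd w\}$ and $u'\notin\{x:x\lhd v\}$) is the crux. The acyclicity and connectivity are then comparatively formal, relying only on the already-established fact that each down-set is a full interval.
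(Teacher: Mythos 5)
Your proof is correct, but it takes a genuinely different route from the paper's. The paper never argues with chords directly: it uses the exceptional hypothesis exactly once, to show that the down-sets $I_k$ of distinct $\lhd$-maximal elements of $P$ are disjoint intervals, identifies exceptional interval-posets with the non-interleaving forests of \cite{chapoton_etal_mould}, and then concludes by induction on the size via the operadic grafting decompositions $G_P = S \circ_{1} G_{I_{k_1}} \circ_{2} \cdots \circ_{s} G_{I_{k_s}}$ (several maximal elements) and $G_P = (U\circ_1 I_{P_1}) \circ_3 I_{P_2}$ (unique maximal element), effectively outsourcing the geometry to Lemma $5.2$ of \cite{chapoton_etal_mould}. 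You instead verify the three defining properties of a noncrossing tree head-on: down-sets are full integer intervals (hence edges are chords determined by, and determining, their elements), the exceptional condition forbids properly overlapping chords, and then acyclicity plus an edge count give treeness. Your use of the hypothesis is localized differently and is the real novelty: you take a $\lhd$-maximal element $y$ of the intersection of two properly overlapping down-sets and read off the first covers of saturated chains toward $v$ and $w$, landing one strictly below $y$ and one strictly above, which is exactly the configuration excluded by Definition \ref{exc}; this step is sound, as is the acyclicity argument (a cycle of noncrossing chords visits corners in circular order, so the long interval would be tiled by the short ones, contradicting antisymmetry). What each approach buys: yours is self-contained, needs no induction and no external lemma, and makes the geometric mechanism transparent; the paper's recursive decomposition is weaker as a standalone argument (it is only sketched) but is precisely the structure reused immediately afterwards to prove that $T\mapsto P_T$ and $P \mapsto T_P$ are inverse bijections, so it does double duty in the paper's architecture.
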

\begin{proof}
Let $(P,\lhd)$ be an exceptional interval-poset. If $k$ is a maximal element of $P$ (for the relation $\lhd$), then the set $I_k := \{ i \in P\ ;\ i\lhd k\}$ is an interval because $P$ is an interval-poset. Moreover, if $k$ and $k'$ are two maximal elements of $P$, then the intervals $I_k$ and $I_k'$ are disjoint. Indeed, let $z\in P$ such that $z\lhd k$ and $z\lhd k'$. We can assume that $k\leqslant k'$. If $z \leqslant k \leqslant k'$, then the interval-poset condition implies that $k\lhd k'$ and by maximality $k=k'$. Similarly, if $k\leqslant k' \leqslant z$, the interval-poset condition implies that $k' \lhd k$ and by maximality, we have $k=k'$. Now, if $k < z <k'$, by maximality of $k$ and $k'$, we have a configuration of the form $z \to k$ and $z\to k'$ in the Hasse diagram. This is not possible since the interval poset $P$ is exceptional. In other words, the exceptional interval-posets are nothing but the \emph{non-interleaving forests} introduced in Section $5.1$ of \cite{chapoton_etal_mould}. In particular, the result is a direct consequence of Lemma $5.2$ \cite{chapoton_etal_mould}. We sketch it for the convenience of the reader. 

It is easy to see that the poset $P$ has a unique maximal element if and only if the base of the polygon is in the graph $G_P$. In this case, we say that $G_P$ is based. 

The interval-poset $P$ is disjoint union of $s$ interval-posets $I_{k_1},\cdots, I_{k_n}$ where $k_i$ runs through the maximal elements of $P$. If there is more than one maximal element, by induction on the size of the poset we have that the graph $G_{I_{k_i}}$ is a based noncrossing tree. Now, it is easy to see that the graph $G_P$ is obtained by gluing the base of all the noncrossing trees $G_{I_{k_i}}$ on the boundary of a regular $s+1$-gon. More formally, in terms of NCP-operads, we have $G_p = S \circ_{1} G_{I_{k_1}} \circ_{2} \cdots \circ_{s} G_{I_{k_s}}$, where $S$ is the noncrossing tree with $s$ edges consisting of all boundary edges of the regular $s+1$-gon, except for the base. 

If there is only one maximal element $m$ in $P$, then $G_P$ is based. The case where $P$ has only two elements is elementary and can be checked by listing all the possible cases. If $|P|\geqslant 3$, let $P_{1} = \{ i \in P\ ;\ i < m \}$ and $P_{2} = \{ i \in P\ ;\ m < i\}$. Clearly $P_1$ and $P_2$ are two disjoint interval-posets of size smaller than $|P|$. By induction, the graphs $I_{P_1}$ and $I_{P_2}$ are noncrossing trees. Let $U$ be the noncrossing tree in a based square consisting of all the base and the two adjacent boundary edges. It is now easy to see that $G_P = (U\circ_1 I_{P_1}) \circ_3 I_{P_2}$. In particular, $G_P$ is a noncrossing tree. 
\end{proof}
\begin{proposition}
The map sending a noncrossing tree $T$ to the interval-poset $P_T$ and the map sending an exceptional interval-poset $P$ to the noncrossing tree $T_P$ are two bijections inverse from each other between the set of noncrossing trees in a based regular $n+1$-gon and the set of exceptional interval-posets of size $n$.
\end{proposition}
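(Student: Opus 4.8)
The plan is to verify that the two maps are mutually inverse by checking that both compositions are the identity; by the preceding results both maps are already known to be well defined (Lemma \ref{lem1} together with the exceptional lemma give $T\mapsto P_T$, and Lemma \ref{lem2} gives $P\mapsto T_P=G_P$), so only $T_{P_T}=T$ and $P_{T_P}=P$ remain. The observation I would isolate first is that in any interval-poset the down-set $\{x:x\lhd v\}$ of an element $v$ is a full integer interval: this is exactly the content of the interval-poset condition, which fills in every integer lying between $v$ and any element related to it. I write this interval as $[v_1,v_2]$, so that $v_1=\min\{x:x\lhd v\}$ and $v_2=\max\{x:x\lhd v\}$ are precisely the data used to build the edge of $G_P$ attached to $v$. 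By antisymmetry distinct elements have distinct down-sets, hence $v\mapsto[v_1,v_2]$ is injective and the $n$ edges of $G_P$ are pairwise distinct.

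For $T_{P_T}=T$ I would fix the edge of $T$ labelled $i$, written $[i_1,i_2]$, and prove that $\{x:x\lhd_T i\}=[i_1,i_2]$ as an integer interval. By construction this down-set is the set of labels of the edges of $T$ lying in the region $R$ cut off from the base by the edge $[i_1,i_2]$. That region is an $(i_2-i_1+2)$-gon, and the noncrossing subtree of $T$ it contains is connected (because $T$ is a tree and $[i_1,i_2]$ is one of its edges), so it has exactly $i_2-i_1+1$ edges, counting $[i_1,i_2]$ itself. Their labels are pairwise distinct and all lie in $\{i_1,\dots,i_2\}$, a set of $i_2-i_1+1$ elements, so they fill it. In particular $\min\{x:x\lhd_T i\}=i_1$ and $\max\{x:x\lhd_T i\}=i_2$, so the edge $G_{P_T}$ assigns to $i$ is exactly $[i_1,i_2]$, the original edge. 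Letting $i$ range over $[1,n]$ recovers every edge of $T$, whence $T_{P_T}=T$.

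For $P_{T_P}=P$, starting from an exceptional interval-poset $P$, I would show that relabelling $G_P$ returns the label $v$ to the edge $e_v=[v_1,v_2]$ and that the induced order coincides with $\lhd$. Writing $\{x:x\lhd v\}=[v_1,v_2]$, the decomposition $\{x:x\lhd v\}\setminus\{v\}=\bigsqcup_{w\lessdot v}\{x:x\lhd w\}$ shows that the child edges $e_w$ partition $[v_1,v_2]\setminus\{v\}$, so $v$ is the unique boundary edge inside the region of $e_v$ that is not separated from the base by a smaller edge of $G_P$; this is precisely the edge selected by the relabelling rule, so $e_v$ receives the label $v$. For the order, $x\lhd_{G_P}w$ means $e_x$ is nested in the region of $e_w$, i.e. $[x_1,x_2]\subseteq[w_1,w_2]$; since $x\in[x_1,x_2]$ and $\{x:x\lhd w\}=[w_1,w_2]$, this is equivalent to $x\in\{x:x\lhd w\}$, that is, to $x\lhd w$. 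Hence $P_{T_P}=P$, and the two maps are inverse bijections.

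The delicate point, which I expect to be the main obstacle, is the label-matching in the second composition: the claim that the Hasse-children $w\lessdot v$ of an element correspond exactly to the edges of $G_P$ lying directly inside the region of $e_v$, so that their intervals partition $\{x:x\lhd v\}\setminus\{v\}$ and leave precisely $v$. This is exactly where the \emph{exceptional} (non-interleaving) hypothesis is indispensable, since without it the child intervals could interleave and $G_P$ would not even be a tree; here I would rely on the non-interleaving-forest description and the inductive gluing already carried out in the proof of Lemma \ref{lem2}.
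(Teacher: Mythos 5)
Your proof is correct, but it takes a genuinely different route from the paper's. The paper argues by induction on $n$ using operadic grafting decompositions: a poset with several maximal elements is split as a disjoint union $P_1\sqcup\cdots\sqcup P_s$ matching the decomposition $T=S\circ_1 T_1\circ_2\cdots\circ_s T_s$ of a non-based tree, a based tree with unique maximal element $m$ is split via $T=(U\circ_1 T_1)\circ_3 T_2$ and $P=P_1\sqcup\{m\}\sqcup P_2$, and the two compositions are checked on these building blocks. You instead verify both compositions directly: the observation that down-sets in an interval-poset are integer intervals $[v_1,v_2]$ identifies the edge data intrinsically; $T_{P_T}=T$ follows from a counting argument (the subtree of $T$ inside the region cut off by an edge is a spanning tree of that sub-polygon, so its $i_2-i_1+1$ distinct labels fill $[i_1,i_2]$); and $P_{T_P}=P$ follows from the nesting characterization of separation. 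Your approach buys transparency about where each hypothesis enters (tree-ness for the counting, exceptionality only for disjointness of the children's down-sets) and avoids the based/non-based case analysis, while the paper's induction stays inside the non-interleaving-forest and operadic formalism it needs anyway for Theorem \ref{image_noncrossing}. Two points deserve care when you write this up. First, the partition claim $\{x:x\lhd v\}\setminus\{v\}=\bigsqcup_{w\lessdot v}\{x:x\lhd w\}$ is the crux; deferring its disjointness to Lemma \ref{lem2} is legitimate but needs one bridging step, since that lemma treats maximal elements of $P$ itself: either restrict $P$ to the order ideal $\{x:x\lhd v\}\setminus\{v\}$ (covers in a down-closed subset are covers in $P$, so the restriction is again exceptional, and its maximal elements are exactly the Hasse-children of $v$), or argue directly that $x\lhd w_1$ and $x\lhd w_2$ for distinct children $w_1<w_2$ forces either $w_1\lhd w_2$ or $w_2\lhd w_1$ (contradicting the cover relations) when $x\leqslant w_1$ or $x\geqslant w_2$, and contradicts exceptionality when $w_1<x<w_2$. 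Second, your reading of the relabelling rule --- the label of $e_v$ is the boundary edge in its region not separated from the base by any \emph{smaller} tree edge --- is the correct innermost-separator formulation; it is in fact sharper than the paper's literal phrase ``the unique open boundary edge that it separates from the base'', which taken verbatim need not single out a unique edge, so it is worth stating your version explicitly and checking it against Figure \ref{fig1}.
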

\begin{proof}
The result is proved by induction. The cases $n=0,1$ and $2$ can be easily checked by hand. Let $n\geqslant 3$. If $T$ is a noncrossing tree, we denote by $P_T$ the exceptional interval-poset obtained in Lemma \ref{lem1}. If $P$ is an exceptional interval-poset, we denote by $T_P$, the noncrossing tree obtained in Lemma \ref{lem2}. Let $S$ be the noncrossing tree with $s$ edges consisting of all boundary edges of the regular $s+1$-gon, except for the base. Let $T_1,\cdots, T_s$ be $s$ based noncrossing trees. Let $T = S \circ_1 T_1\circ_2 \cdots \circ_s T_s$. The edges of $T_i$ (viewed as edges in $T$) are separated from the base by the base of $T_i$, and the edges of $T_i$ are not separated from the base by any edge of $T_j$ for $i\neq j$. This implies that $P_T$ is the disjoint union of the posets $P_{T_i}$ and all these posets have a unique maximal element. 

If the poset $P$ has more than one maximal element, we have $P = P_1 \sqcup \cdots \sqcup P_s$ where $P_i$ is the set of elements smaller than the $i$-th maximal element. By the proof of Lemma \ref{lem2}, the corresponding noncrossing tree $T_P$ is of the form $S\circ_{1} I_{P_1}\circ_{2} \cdots \circ_{s} I_{P_s}$. By the remark above, the poset corresponding to the tree $T_P$ is $P_{I_{P_1}} \sqcup \cdots \sqcup P_{I_{P_s}}$. Now, by induction we have that $P_{T_{P}} = P$. 

Similarly, if the tree $T$ is not based, it can be written as $S \circ_1 T_1\circ_2 \cdots \circ_s T_s$ where $T_i$ are based noncrossing trees. So, we have $P_{T} = P_{T_1} \sqcup \cdots \sqcup P_{T_s}$, and $T_{P_{T}} = S \circ_1 T_{P_{T_1}} \circ_2 \cdots \circ_s T_{P_{T_s}}$. One more time, an induction gives the result. 

Let $U$ be the noncrossing tree in a based square consisting of the base and the two adjacent boundary edges. If $T$ is a based noncrossing tree, there are two noncrossing trees $T_1$ and $T_2$ such that $T = U\circ_1 T_1 \circ_3 T_2$. It is easy to see that the poset $P_T$ is of the form $P_1 \sqcup \{m\} \sqcup P_2$, where $m$ is the labelling of the base of $T$, $P_1$ is the subset consisting of the elements smaller (for $<$) than $m$ and $P_2$ is the set of elements bigger than $m$. Since $m$ is the label of the basis it is the unique maximal element of $P_T$. Using this decomposition of based noncrossing trees, and exceptional interval-posets with a unique maximal element, it is easy to prove by induction that $T_{P_T} = T$ and $P_{T_{P}} = P$. 
\end{proof}

By Theorem $5.3$ \cite{chapoton_mould} there is an injective morphism of operads (in the category of sets) $\Theta$ from the operad of noncrossing plants $\mathbf{NCP}$ and the dendriform operad $\mathbf{Dend}$. Using exceptional interval-posets we describe the image of a noncrossing tree by $\Theta$. 

\begin{theorem}\label{image_noncrossing}
Let $T$ be a noncrossing tree. Let the image of $T$ in $\mathbf{Dend}$ be $\sum_{t\in I} t$. Then the set of trees $I$ is the interval of the Tamari lattice corresponding to the exceptional interval-poset $P_T$. 
\end{theorem}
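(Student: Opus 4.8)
The plan is to argue by induction on the size $n$, exploiting that $\Theta$ is a morphism of operads together with the operadic decompositions of noncrossing trees recorded in the proof of the previous proposition. For $n \leqslant 2$ the statement is checked by direct inspection, matching the finitely many noncrossing trees against the finitely many exceptional interval-posets and their intervals.

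For the inductive step I would use the two structural decompositions of a noncrossing tree $T$ recorded in the proof of the previous proposition: in the non-based case $T = S \circ_1 T_1 \circ_2 \cdots \circ_s T_s$, and in the based case $T = U \circ_1 T_1 \circ_3 T_2$, where $S$ and $U$ are the elementary noncrossing trees of Lemma \ref{lem2}. Since $\Theta$ is a morphism of operads, $\Theta(T) = \Theta(S)\circ_1 \Theta(T_1) \circ_2 \cdots$, so the interval $I$ is produced from the intervals $I_i$ (where $\Theta(T_i) = \sum_{t\in I_i} t$) by the corresponding partial compositions in $\mathbf{Dend}$. On the other side, the previous proposition shows that $P_T = P_{T_1} \sqcup \cdots \sqcup P_{T_s}$ in the non-based case, and $P_T = P_1 \sqcup \{m\} \sqcup P_2$ with a unique maximal element $m$ in the based case. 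It therefore suffices to prove that the Châtel-Pons bijection of Theorem \ref{bij} intertwines dendriform partial composition on sums over intervals with these two operations on interval-posets.

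To establish this compatibility I would compute the effect of the relevant partial compositions on a sum over a Tamari interval. By Proposition \ref{char_tamari}, an interval $I_i = [S_i, T_i]$ is determined by the data $\mathrm{Dec}(S_i)$ and $\mathrm{Inc}(T_i)$. I would show that composition with $S$ (resp. $U$) along the base amounts, on the level of binary trees, to a grafting operation whose effect on these relations is transparent: the decreasing relations of the new minimal tree are the disjoint union of the $\mathrm{Dec}(S_i)$ together with the decreasing relations forced by the base—none in the non-based case, and exactly the relations into $m$ in the based case—and symmetrically for the increasing relations and the new maximal tree. Reading off the increasing and decreasing relations from the disjoint-union (resp. unique-maximal-element) structure of $P_T$ and comparing then closes the induction.

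The main obstacle is the explicit bookkeeping of dendriform partial composition in the binary-tree basis: unlike plain substitution of trees, composition in $\mathbf{Dend}$ returns a sum of trees, and one must verify that this sum is precisely the grafted Tamari interval predicted by $P_T$, with its endpoints transforming according to the Châtel-Pons rules. A shorter alternative would sidestep this computation: the proof of Lemma \ref{lem2} already identifies the exceptional interval-posets with the non-interleaving forests of \cite{chapoton_etal_mould}, so one may instead invoke the description of $I$ in terms of the non-interleaving forest given there and merely check that it agrees with the interval attached to $P_T$ by Theorem \ref{bij}. The remaining work then reduces to matching two combinatorial recipes for the minimal and maximal trees of the interval, once more via Proposition \ref{char_tamari}.
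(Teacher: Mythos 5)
Your inductive skeleton (decompose $T$ operadically, use that $\Theta$ is a morphism of operads, conclude by a compatibility statement) is sound in outline, but the step you describe as ``transparent'' bookkeeping is in fact the entire content of the theorem, and your proposal leaves it unproven. The difficulty is that partial composition in $\mathbf{Dend}$ of two basis trees is \emph{not} a grafting: it is already a sum over a Tamari interval whose minimum and maximum are obtained by two \emph{different} graftings at the chosen leaf (``under'' and ``over''); for instance $(x\prec y)\circ_1 (x\prec y) = (x\prec y)\prec z = x\prec(y\prec z) + x\prec (y\succ z)$ is a sum of two trees. Consequently, the claim that $\Theta(S)\circ_1\bigl(\sum_{t\in I_1}t\bigr)\circ_2\cdots$ is again a sum over an interval, with interval-poset given by your disjoint-union (resp.\ unique-maximal-element) recipe, is a genuine theorem --- essentially the stability of Tamari-interval sums (``cells'') under composition established in \cite{chapoton_etal_mould} --- and not a routine verification via Proposition \ref{char_tamari}. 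Until that lemma is proved or cited, the induction does not close. A secondary hole: when $T$ is itself the fan $S$ (all boundary edges except the base), your non-based decomposition is trivial (all parts have size one), so this case is not reduced by the induction and needs its own argument, e.g.\ a further decomposition of the fan.

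For comparison, the paper does not induct at all: it pushes everything into the mould operad via the commutative triangle $\psi\circ\Theta=\phi$. By Lemma 5.3 of \cite{chapoton_etal_mould} one has $\phi(T)=\sum_{\sigma\in L(P_T)}f_\sigma$, the multiresidues $\oint_\sigma$ of Proposition 3.3 of \cite{chapoton_mould} separate the $f_\sigma$, and a binary tree $t$ satisfies $\oint_\sigma\psi(t)\neq 0$ exactly when $\sigma$ is a linear extension of the poset of $t$; hence the union of the linear extensions of the trees of $I$ equals $L(P_T)$, and the Ch\^atel--Pons correspondence (Theorem 2.8 of \cite{pons_chatel}), which characterizes an interval-poset by its set of linear extensions, identifies $I$ as the interval of $P_T$. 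Note that your proposed ``shorter alternative'' --- identifying exceptional interval-posets with non-interleaving forests via Lemma \ref{lem2} and invoking the description of $I$ from \cite{chapoton_etal_mould} --- is essentially this argument, i.e.\ the paper's own proof; if you take that route, the comparison of the two recipes is made through linear extensions and residues, not through endpoint-matching with Proposition \ref{char_tamari}, and that is the part you would need to spell out.
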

\begin{proof}
Since exceptional interval-posets are the same as non-interleaving forests, the result follows from a reformulation of Section $5.1$ of \cite{chapoton_etal_mould} and a description of interval-posets in terms of linear extension due to Châtel and Pons. We sketch the arguments. 

Let $\phi : \mathbf{NCP} \to \mathbf{Mould}$ be the injection defined in Section $5.2$ \cite{chapoton_mould} or in $5.2$ \cite{chapoton_etal_mould}. Let $\psi : \mathbf{Dend} \to \mathbf{Mould}$ be the injection defined in Theorem $3.1$ of \cite{chapoton_mould}. By Lemma $5.3$ \cite{chapoton_etal_mould}. Since the maps $\Theta$, $\phi$ and $\psi$ are morphisms of operads and since the diagram is commutative on the elements of $\mathbf{NCP}(2)$, the following diagram is commutative.

\[
\xymatrix{
\mathbf{NCP}\ar[rd]^{\phi}\ar[rr]^{\Theta} & & \mathbf{Dend} \ar[dl]_{\psi} \\
& \mathbf{Mould}
}
\]
Moreover, all the morphisms are injective.

Let $T$ be a noncrossing tree. By lemma $5.3$ of \cite{chapoton_etal_mould}, we have $\phi(T) = \sum_{\sigma \in L(P_T)} f_{\sigma}$ where $P_T$ is the exceptional interval-poset that corresponds to $P$ and $L(P_T)$ is the set of all linear extensions of $P_T$ and if $\sigma \in S_n$, then $f_{\sigma}$ is the fraction defined by
\[
f_{\sigma}(u_1,\cdots, u_n) = \frac{1}{u_{\sigma(1)} \cdot (u_{\sigma(1)} + u_{\sigma(2)} ) \cdot \cdots \cdot (u_{\sigma(1)} + \cdots + u_{\sigma(n)})}. 
\]
For $\sigma,\sigma' \in S_n$ the multi-residue $\oint_{\sigma}$ (see Proposition $3.3$ \cite{chapoton_mould}) have the property that $\oint_{\sigma} f_{\sigma'} \neq 0$ if and only if $\sigma = \sigma'$. So for $\sigma \in S_{n}$, we have $\oint_{\sigma} \phi(P_T) \neq 0$ if and only if $\sigma$ is a linear extension of $P_T$. 

On the other hands, by Proposition $3.3$ of \cite{chapoton_mould}, if $T$ is a binary tree, we have $\oint_{\sigma} \psi(T) \neq 0$ if and only if $\sigma$ is a linear extension of the poset induced by the tree $T$. As consequence, $I$ is the set of trees whose linear extensions are exactly the linear extensions of $P_T$. Now, by Theorem $2.8$ of \cite{pons_chatel}, this implies that $I$ is an interval of the Tamari lattice, and that $P_T$ is the interval-poset corresponding to $I$. 
\end{proof}
\subsection{Noncrossing partitions}
A partition $(b_1,\cdots,b_n)$ of $\{1,\cdots, n\}$ is \emph{noncrossing} if there do not exist $1\leqslant i < j < k < l \leqslant n$ such that $i,k \in b_s$ and $j,l \in b_t$ for $s\neq t$. Let $\NC_n$ be the set of all noncrossing partitions of $\{1,\cdots, n\}$. It is well-known that the cardinality of this set is the Catalan number $c_n$. The refinement of partitions induces a structure of partial order on $\NC_n$ which is known to be a lattice (see \cite{kreweras} for more details). 

It is also classical that the Tamari lattice is a refinement of the poset of noncrossing partitions. In general, it is convenient to realize these posets on the set of Dyck paths via well chosen bijections in order to compare them (see Section $2$ of \cite{bernardi_bonichon} for more details). Here, in order to simplify the proofs, we will realize the poset of noncrossing partions on the Tamari lattice, using a bijection similar to a bijection introduced by Edelman \cite{edelman}.

If $T$ is a (planar) binary tree, we can view it as a binary search tree using the in-order algorithm (this is why our bijection is not the same as Edelman's bijection: he labelled the trees with the pre-order traversal).  Then, the partition $\pi_T$ associated to the tree $T$ is the finest partition of $\{1,2,\cdots, n\}$ such that if $j$ is right child of $i$, then $i$ and $j$ are in the same block. For example, the partition corresponding to the binary tree of Figure \ref{fig_inorder} is $\{1,3,4\}, \{2\}, \{5,8\}, \{6,7\}$. 
\begin{lemma}
Let $T$ be a binary tree and $\pi_T$ its corresponding partition. Then, $\pi_T$ is a noncrossing partition.
\end{lemma}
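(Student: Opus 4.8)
The plan is to show that a crossing in $\pi_T$ would contradict the in-order (binary search tree) labelling. First I would record the shape of a block. Since $\pi_T$ is the finest partition for which "$j$ is the right child of $i$" forces $i$ and $j$ into the same block, and since every vertex has at most one right child and is the right child of at most one vertex, the connected components of these relations are chains obtained by repeatedly passing to the right child. Thus each block of $\pi_T$ is a maximal right chain $c_0 \to c_1 \to \cdots \to c_m$, where $c_{u+1}$ is the right child of $c_u$. Because the right child of a vertex always carries a larger label (it lies in the right subtree), the labels increase along the chain, so the elements of a block listed in increasing order are exactly $c_0 < c_1 < \cdots < c_m$.

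Next I would recall two elementary facts about the in-order labelling that I will use repeatedly. The set of labels appearing in the subtree rooted at a vertex $x$ is an interval of integers containing the label of $x$. Moreover, if $y$ is the right child of $x$, then the labels lying strictly between $x$ and $y$ are precisely those of the left subtree of $y$; in particular they form the interval $[x+1,y-1]$, and every subtree rooted at one of these vertices stays inside the left subtree of $y$.

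Then I would argue by contradiction. Suppose $\pi_T$ has a crossing, i.e. there are $i<j<k<l$ with $i,k$ in one block $B$ and $j,l$ in another block $B'\neq B$. By the first step $k$ is reached from $i$ by following right children, say $i=c_0\to c_1\to\cdots\to c_s=k$, and all the $c_u$ lie in $B$. Since $i<j<k$ and $j\notin B$, the integer $j$ differs from every $c_u$, so it lies strictly between two consecutive chain vertices, $c_u<j<c_{u+1}$ for some $0\leqslant u<s$. By the second step $j$ lies in the left subtree of $c_{u+1}$. Now $j$ and $l$ lie in the same block with $j<l$, so $l$ is a right descendant of $j$; hence $l$ belongs to the subtree rooted at $j$, which is contained in the left subtree of $c_{u+1}$. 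Therefore $l\leqslant c_{u+1}-1<c_{u+1}\leqslant c_s=k$, contradicting $k<l$. This rules out any crossing and shows that $\pi_T$ is noncrossing.

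The main obstacle is the middle step: pinning down exactly where $j$ sits relative to the right chain through $i$ and $k$, and recognizing that $l$ being a right descendant of $j$ traps it inside the left subtree of $c_{u+1}$, which forces $l<k$. Everything else is bookkeeping with the in-order intervals; the one point to double-check carefully is that the block containing $i$ and $k$ is genuinely a right chain, so that $k$ really is a right descendant of $i$ and the intermediate vertices $c_1,\dots,c_{s-1}$ are available to localize $j$.
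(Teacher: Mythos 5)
Your proof is correct. It follows the same overall strategy as the paper --- assume a crossing $i<j<k<l$ with $i,k$ in one block and $j,l$ in another, use that blocks are maximal right chains (so $k$ is a right descendant of $i$ and $l$ a right descendant of $j$), and exploit the in-order labelling --- but the two arguments pivot on different facts. The paper (modulo an evident typo: ``$l$ and $i$ are in the same block'' should read ``$l$ and $j$'') observes that the subtree rooted at $j$ carries an interval of labels containing $j$ and $l$, hence containing $k$; since $j$ then lies on the unique path from $i$ down to $k$, which consists only of right edges, $j$ is itself a right descendant of $i$, so all four labels sit in a single block and the configuration was never a crossing between distinct blocks. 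You instead use disjointness of the blocks to place $j$ strictly between two consecutive chain vertices, $c_u<j<c_{u+1}$, invoke the fact that the labels strictly between a vertex and its right child are exactly those of the right child's left subtree, and then trap $l$ (a descendant of $j$) inside that left subtree, giving $l<c_{u+1}\leqslant k$, contradicting $k<l$. The two mechanisms are dual: the paper shows $j$ is forced onto the chain, you show that if it is off the chain then $l$ cannot escape a left subtree. Your version is a bit longer because you prove explicitly that blocks are maximal right chains with increasing labels --- a fact the paper uses silently --- but it is also tighter at the end: each step is a named binary-search-tree fact, whereas the paper's ``the only possibility is\dots'' compresses the path-uniqueness argument into one sentence.
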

\begin{proof}
Let $i< j < k < l$ such that $i,k$ are in a block $b_1$ and $j,l$ are in a block $b_2$. The vertex of $T$ labelled by $k$ is a right descendant of the vertex labelled by $i$. 

Since the in-order algorithm goes first through left subtree, then it visits the root and finally goes through right subtree the vertex $j$ is in the right subtree of $i$. Since $l$ and $i$ are in the same block, the vertex $l$ is right-descendant of $i$. Since $k<l$, the vertex $k$ is in the right subtree of $j$. The only possibility is that $j,k$ and $l$ are right descendants of $i$. So, they are in the same block.
\end{proof}
Conversely, if $\pi = (b_1,\cdots, b_n)$ is a noncrossing partition of $\{1,\cdots, n\}$ we will construct a binary search tree associated to this partition. We assume that the blocks of the partition are totally ordered in such a way that $\min(b_1) < \min(b_2) <\cdots < \min(b_n)$ and the elements of the blocks are ordered by the natural order of the integers. The tree $T_\pi$ is constructed in two steps:
\begin{enumerate}
\item To each block $b_i$ is associated a binary tree $T_i$ with root $\min(b_i)$ and if $y$ is the successor of $x$ in the block $b_i$ then, $y$ is the right son of $x$. 

\item Then, if $T_i$ is a tree constructed in the first step, let $m_i$ be the vertex with maximal labelling in the tree. We construct inductively a tree $T_\pi$ by grafting the root of $T_i$ as the left son of the vertex labelled by $m_i + 1$. For an example see Figure \ref{bij_ncp}.
\end{enumerate}
\begin{figure}[h]
\centering
    \begin{tikzpicture}[scale =0.5]
\node (1) at (-4,4) {1}; \node (2) at (-3,4) {2}; \node (3) at (-2,4) {3}; \node (4) at (-1,4) {4}; \node (5) at (0,4) {5}; \node (6) at (1,4) {6}; \node (7) at (2,4) {7}; \node (8) at (3,4) {8};
\draw[blue,thick] (2.north) to  [out=140,in=50] (1.north);
\draw[blue,thick] (4.north) to  [out=100,in=80] (3.north);
\draw[blue,thick] (7.north) to  [out=135,in=45] (2.north);
\draw[blue,thick] (6.north) to  [out=100,in=80] (5.north);
\end{tikzpicture}
\begin{tikzpicture}
\node (1) at (2,0) {1}; \node (2) at (2.5,0.5) {2} ; \node (7) at (3,1) {7}; \node (3) at (0,1) {3}; \node (4) at (0.5,1.5) {4}; \node (5) at (1,0.5) {5}; \node (6) at (1.5,1) {6}; \node (8) at (3.5,-1) {8};
\draw (1)--(2)--(7)
	 (3)--(4)
	 (5)--(6);
\draw[red,->] (1.south) to [out=-45,in=135] (8.west);
\draw[red,->] (3.south) to [out = -45, in= 135] (5.west);
\draw[red, ->] (5.south) to [out = -45, in = 135] (7.west);
\end{tikzpicture}
\begin{tikzpicture}[scale =0.8]
\node (a) at (0,0) {8};
\node (c) at (-1,3) {3};
\node (b) at (-0.5,0.5) {1};
\node (d) at (0,1) {2};
\node (e) at (-0.5, 3.5) {4};
\node (f) at (0, 2) {5};
\node (g) at (1.5,3.5) {6} ; 
\node (h) at (0.5,1.5) {7};
\node (1) at (-4,4) {}; \node (2) at (-3,4) {}; \node (3) at (-2,4) {}; \node (4) at (-1,4) {}; \node (5) at (0,4) {}; \node (6) at (1,4) {}; \node (7) at (2,4) {}; \node (8) at (3,4) {}; \node (9) at (4,4) {}; 
\draw (a)--(b)--(1)
	  (b)--(d)--(h)--(8)
	  (a)--(9)
	  (d)--(2)
	  (h)--(f)--(c)--(3)
	  (f)--(g)--(7)
	  (g)--(6)
	  (c)--(e)--(5)
	  (e)--(4);
\end{tikzpicture}
\caption{An example of the two steps of the construction of a binary tree associated to a noncrossing partition.}\label{bij_ncp}
\end{figure}
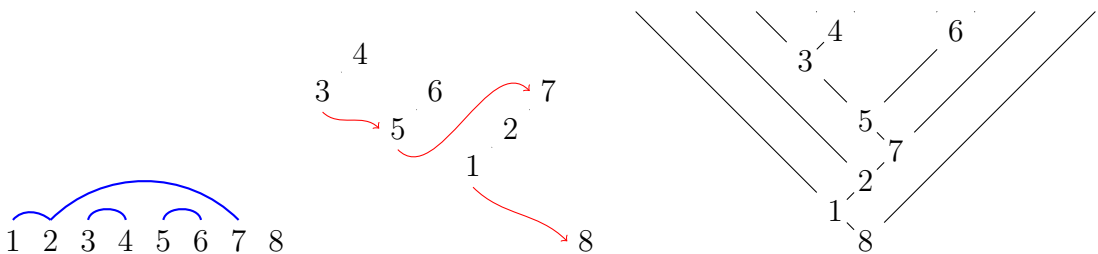

\begin{lemma}
Let $\pi$ be a noncrossing partition of $\{1,\cdots,n\}$ and $T_\pi$ the corresponding binary tree. Then, $T_\pi$ is a binary search tree. 
\end{lemma}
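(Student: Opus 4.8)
My plan is to first unwind the two-step construction into a direct description of the edges of $T_\pi$, and then prove the search-tree property by induction, inserting the blocks one at a time in decreasing order of their maximal element. From the construction, a vertex $k$ has a right child exactly when $k$ is not the maximum of its block, the right child then being the successor of $k$ inside its block (a larger integer); and $k$ has a left child exactly when $k-1$ is the maximum of some block $b_i$ (necessarily with $\max(b_i)<n$), the left child then being $\min(b_i)$, the root of the chain $T_i$. In particular distinct chains are grafted onto pairwise distinct vertices, since the target $\max(b_i)+1$ determines $\max(b_i)$; hence every vertex receives at most one graft, the construction is well defined, and $T_\pi$ does not depend on the order in which the graftings are performed.

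Because the block maxima are distinct and $n$ is the largest integer, there is a unique block $b^\ast$ with $\max(b^\ast)=n$; its chain is never grafted and forms the root spine of $T_\pi$. I would add the blocks in decreasing order of their maximum, starting with $b^\ast$. The point of this order is that when a block $b_i\neq b^\ast$ is grafted onto the vertex $v=\max(b_i)+1$, that vertex is already present: its own block has maximum at least $v>\max(b_i)$, hence was inserted earlier. Each chain $T_i$ is by itself a binary search tree (a path of right edges with increasing labels), and I would use the standard criterion that grafting a binary search tree $A$ as the left child of a vertex $v$ that currently has no left child again yields a binary search tree provided (a) every label of $A$ is smaller than $v$, and (b) every ancestor $u$ of $v$ with $u<v$ satisfies $u<\min(A)$. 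Condition (a) is immediate since $\max(A)=\max(b_i)=v-1<v$, and $v$ has no prior left child by the injectivity of the grafting targets; so everything reduces to (b).

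The crux, and the step where noncrossingness enters, is the claim that any vertex $u$ already present at the moment $b_i$ is grafted with $u\le\max(b_i)$ in fact satisfies $u<\min(b_i)$. Indeed, suppose instead $\min(b_i)\le u\le\max(b_i)$. The vertex $u$ lies in a block $b'$ with $\max(b')>\max(b_i)$, so $b'\neq b_i$ and $u\notin b_i$; the inequalities are therefore strict, $\min(b_i)<u<\max(b_i)<\max(b')$, and the four integers $\min(b_i)<u<\max(b_i)<\max(b')$ realize a crossing between $b_i$ and $b'$, contradicting that $\pi$ is noncrossing. Granting this, any ancestor $u<v=\max(b_i)+1$ of $v$ is a present vertex with $u\le\max(b_i)$, hence $u<\min(b_i)=\min(A)$, which is exactly condition (b). The inductive step is thus complete, and after all blocks have been inserted $T_\pi$ is a binary search tree.

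I expect the crossing argument of the previous paragraph to be the only genuine difficulty; the remainder is bookkeeping about the shape of the grafted chains and the standard behaviour of left-insertions in a search tree. As a consistency check one can note that the search-tree property is equivalent to the statement that the in-order traversal of $T_\pi$ reads $1,2,\dots,n$, which is precisely what the absence of intervening vertices in $[\min(b_i),\max(b_i)]$ guarantees; verifying this directly on an example (for instance the tree of Figure \ref{bij_ncp}) is a useful sanity check of the whole scheme.
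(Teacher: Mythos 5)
Your proof is correct. It differs in organization from the paper's: the paper fixes an arbitrary vertex $s$ and verifies the search-tree property locally, arguing that right descendants of $s$ lie in the same block as $s$ (hence exceed $s$), and that the block hanging as a left child of such a right descendant $x$ is trapped in the open interval $(s,x)$ by noncrossingness, with the deeper levels handled by an implicit recursion ("it is easy to check"). You instead make the construction itself the induction: you insert the chains in decreasing order of their block maxima, observe that grafting targets are pairwise distinct so the process is well defined and order-independent, and verify at each graft an explicit criterion --- all labels of the grafted chain below $v=\max(b_i)+1$ and above every ancestor $u<v$ --- which you reduce to the single claim that no already-present vertex lies in $[\min(b_i),\max(b_i)]$. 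Both proofs use noncrossingness at exactly the same point (a grafted block cannot interleave with elements outside it: your four integers $\min(b_i)<u<\max(b_i)<\max(b')$ are the paper's trapped block in different clothing), so the key insight is shared; what your version buys is completeness and rigor, since the inductive invariant and the grafting criterion replace the paper's unproved recursive step, at the cost of some bookkeeping (well-definedness of the grafting order, uniqueness of the root spine) that the paper's local argument never needs to mention.
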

\begin{proof}
Let $s$ be the label of a vertex. If $x$ is a right descendant of $s$, then by construction $s$ and $x$ are in the same block and we have $s < x$. If $y$ is the left son of $x$, then the maximal element of the block of $y$ is $x-1$. This implies that the elements $z$ of the block of $y$ are such that $s<z<x$ because, $s$ and $x$ are in the same block of $y$ and $x-1$ are in the same block, and the partitions are noncrossing. Using these remarks, it is easy to check that if $z$ is in the right subtree of $s$, then $s < z$. Similarly, it is easy to check that the elements of the left subtree of $s$ are labeled by integers strictly smaller than $s$. 
\end{proof}
\begin{proposition}
The map sending a binary tree $T$ to the noncrossing partition $\pi_T$ and the map sending a partition $\pi$ to the binary tree $T_\pi$ are two bijections inverse from each other. 
\end{proposition}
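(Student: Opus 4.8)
The plan is to verify that the two maps compose to the identity in one direction and then to close the argument with a cardinality count, rather than checking both round trips by hand. Concretely, I would prove that $\pi_{T_\pi}=\pi$ for every noncrossing partition $\pi$, and then observe that since the set of binary trees of size $n$ and the set $\NC_n$ both have cardinality $c_n$, the single identity $\pi_{T_\pi}=\pi$ already forces both maps to be mutually inverse bijections.

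For the first step, fix $\pi=(b_1,\dots,b_s)$ and recall that $T_\pi$ is a binary tree whose vertices carry the labels $1,\dots,n$, and that by the previous lemma it is a binary search tree; hence its in-order traversal returns precisely $1,2,\dots,n$, so the relabeling implicit in the definition of $\pi_{T}$ is the identity on $T_\pi$. Thus $\pi_{T_\pi}$ is by definition the finest partition in which $i$ and $j$ lie in a common block whenever $j$ is the right child of $i$ in $T_\pi$. The crux is therefore to identify the right-child edges of $T_\pi$. By step $(1)$ of the construction, within each block $b_i$ the successor $y$ of an element $x$ is attached as the right son of $x$, so every ``consecutive within a block'' pair is a right-child edge, and the elements of $b_i$ form a right-chain. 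By step $(2)$, the only edges created between the trees $T_i$ are the graftings of a root as a \emph{left} son; these add no right-child edges, and they do not destroy the right-child edges produced in step $(1)$. Consequently the right-child relation of $T_\pi$ is exactly the relation ``$x$ and $y$ are consecutive elements of a common block of $\pi$'', whose connected components are precisely the blocks $b_i$. Taking the finest partition closing this relation therefore recovers $\pi$, so $\pi_{T_\pi}=\pi$.

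To finish, write $f$ for the map $T\mapsto\pi_T$ and $g$ for the map $\pi\mapsto T_\pi$; the two preceding lemmas guarantee that $f$ lands in $\NC_n$ and that $g$ lands in the set of binary search trees of size $n$. The computation above says $f\circ g=\mathrm{id}_{\NC_n}$, so $g$ is injective. Since $|\NC_n|=c_n$ equals the number of binary trees of size $n$, an injection between finite sets of equal cardinality is a bijection; hence $g$ is bijective and, from $f\circ g=\mathrm{id}$, we get $f=g^{-1}$, so $f$ and $g$ are inverse bijections.

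I expect the main obstacle to be the bookkeeping in the second paragraph: one must argue cleanly that the grafting in step $(2)$ attaches subtrees only as left sons, so that no right-child edge ever links two different blocks, while simultaneously preserving the right-chains built inside each block in step $(1)$. Once the right-child relation of $T_\pi$ is pinned down to be exactly the within-block successor relation, the equality $\pi_{T_\pi}=\pi$ is immediate, and the cardinality argument then removes any need to verify the reverse composition $T_{\pi_T}=T$ directly.
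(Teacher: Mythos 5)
Your proof is correct, but it finishes differently from the paper. The first half coincides: your identification of the right-child edges of $T_\pi$ as exactly the within-block successor pairs, hence $\pi_{T_\pi}=\pi$, is in substance the paper's own argument (phrased there as: the minimal elements of the blocks are precisely the vertices that are left sons, and their right descendants form their block). Where you genuinely diverge is the reverse composition. The paper verifies $T_{\pi_T}=T$ directly, invoking the uniqueness of the binary-search-tree labeling of a given tree shape --- in effect, that a binary search tree on $\{1,\dots,n\}$ is reconstructed from its maximal right-chains by exactly the grafting rule defining $T_\pi$. You instead observe that $f\circ g=\mathrm{id}$ makes $g$ injective, and since $|\NC_n|=c_n=|\Tam_n|$ (both facts stated as well known in the paper), an injection between finite equinumerous sets is a bijection. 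Both routes are sound: yours spares you any analysis of $T_{\pi_T}$ but imports the Catalan enumeration of noncrossing partitions, whereas the paper's is self-contained and explains structurally \emph{why} the tree is recoverable from its partition. Your finish is, incidentally, in the same spirit as the paper's proof of Theorem \ref{theo_nc}, which also establishes one direction and concludes by equicardinality. One small point worth making explicit in your write-up: your remark that the relabeling implicit in $\pi_{T_\pi}$ is the identity (because the in-order traversal of the binary search tree $T_\pi$ returns $1,\dots,n$ in order) is itself an appeal to the same uniqueness-of-BST-labeling fact that powers the paper's second direction, so that fact has not been avoided, only relocated.
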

\begin{proof}
By construction of the tree $T$, the minimal elements of the blocks are the vertices  that are left son of another vertex (i.e. they have a right father) and their left descendants are the elements of their block. So, the partition $\pi_{T_\pi}$is equal to $\pi$. Since, there is a unique way to turn a binary tree into a binary search tree of size $n$ using exactly once each of the integers $1,2,\cdots, n$, we have $T_{\pi_T} = T$.  
\end{proof}

We can now be more precise about the fact that the Tamari lattice is a refinement of the lattice of noncrossing partitions.
\begin{lemma}\label{refinement}
Let $\pi_1$ and $\pi_2$ be two noncrossing partitions of $\{1,2,\cdots, n\}$. If $\pi_1 \leqslant \pi_2$ in the poset of noncrossing partitions, then $T_{\pi_{1}} \leqslant T_{\pi_{2}}$ in the Tamari lattice. 
\end{lemma}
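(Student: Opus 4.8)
The plan is to reduce the statement to the combinatorial description of the Tamari order given in Proposition~\ref{char_tamari}: since $T_{\pi_1}\leqslant T_{\pi_2}$ holds if and only if $\mathrm{Dec}(T_{\pi_1})\subseteq \mathrm{Dec}(T_{\pi_2})$, it suffices to compute the set of decreasing relations of $T_\pi$ directly in terms of the partition $\pi$ and then to observe that this set only grows when $\pi$ is made coarser. The whole argument will therefore rest on one explicit formula for $\mathrm{Dec}(T_\pi)$.

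The heart of the matter is the following description. For $a\in\{1,\dots,n\}$ write $B_a$ for the block of $\pi$ containing $a$ and $\max(B_a)$ for its largest element; the claim is that
\[
\mathrm{Dec}(T_\pi) = \{\, (a,b) : 1\leqslant a < b \leqslant \max(B_a)\,\}.
\]
To establish this I would argue as follows. By definition a pair $(a,b)$ with $a<b$ is a decreasing relation exactly when the vertex $b$ lies in the subtree rooted at $a$; as $b>a$ and $T_\pi$ is a binary search tree (shown in the lemma above), this means $b$ lies in the \emph{right} subtree of $a$. I would first recall the standard fact that in a binary search tree the subtree rooted at a vertex $a$ is a contiguous interval $[l_a,r_a]$ of labels, whose maximum $r_a$ is reached by following right children from $a$ as long as possible. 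The key point is to identify $r_a$ with $\max(B_a)$: Step~1 of the construction turns each block into a chain of right children (the successor of an element in its block is its right son), while Step~2 only ever grafts subtrees as \emph{left} children. Hence the right children of $T_\pi$ are exactly those coming from the block chains, so following right children from $a$ stays inside $B_a$ and terminates at $\max(B_a)$, which has no right child since it is the maximum of the left subtree of the vertex $\max(B_a)+1$. Thus $r_a=\max(B_a)$, the subtree of $a$ is the interval $[l_a,\max(B_a)]$, and the decreasing relations issued from $a$ are precisely the pairs $(a,b)$ with $a<b\leqslant\max(B_a)$, giving the displayed formula.

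With this formula the conclusion is immediate. If $\pi_1\leqslant\pi_2$ in $\NC_n$ then $\pi_1$ refines $\pi_2$, so for every $a$ the block $B_a^{\pi_1}$ is contained in $B_a^{\pi_2}$ and in particular $\max(B_a^{\pi_1})\leqslant\max(B_a^{\pi_2})$; the description then gives $\mathrm{Dec}(T_{\pi_1})\subseteq\mathrm{Dec}(T_{\pi_2})$, whence $T_{\pi_1}\leqslant T_{\pi_2}$ by Proposition~\ref{char_tamari}. I expect the main obstacle to be the middle step, namely the clean identification $r_a=\max(B_a)$: one must check carefully that the recursive grafting of Step~2 never creates a new right child, so that no right-chain is lengthened beyond its own block and the rightmost descendant of $a$ is genuinely the block maximum rather than some larger label grafted in from another block. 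Everything else is either a citation or a one-line monotonicity observation.
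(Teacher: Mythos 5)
Your proof is correct and follows essentially the same route as the paper: both reduce the statement via Proposition~\ref{char_tamari} to the inclusion $\mathrm{Dec}(T_{\pi_1})\subseteq\mathrm{Dec}(T_{\pi_2})$, and both rest on the observation that right children in $T_\pi$ arise only from the block chains of Step~1 (Step~2 grafts only left children), so that decreasing relations are controlled by same-block elements, which persist when blocks are merged. The only difference is packaging: the paper argues pointwise, choosing for each decreasing relation $j\lhd i$ a same-block witness $x$ with $j\leqslant x$, whereas you prove the global formula $\mathrm{Dec}(T_\pi)=\{(a,b)\,:\,a<b\leqslant\max(B_a)\}$ and conclude by monotonicity of block maxima under coarsening.
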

\begin{proof}
Using Proposition \ref{char_tamari}, it is enough to show that the decreasing relations of $T_{\pi_1}$ are decreasing relations of $T_{\pi_2}$.

Let $i < j$ such that $j\lhd _{T_{\pi_1}} i$. That is $j$ is in the subtree with root $i$. Since $i<j$, this implies that $j$ is in the right subtree of $i$. Let $x$ be the right descendant of $i$ such that $j$ is in its left subtree (if $j$ is a right descendant of $i$, we have $x=j$). Since the tree $T_{\pi_1}$ is a binary seach tree, this implies that $i< j < x$. Moreover, by construction of $T_{\pi_1}$, the elements $i$ and $x$ are in the same block. Since, the partial order relation for noncrossing partitions is given by merging blocks, in the partition $\pi_2$ the elements $i$ and $x$ are also in the same block. In other words, the element $x$ is in the right subtree of $i$ in $T_{\pi_2}$. Since $i < j < x$, this implies that $j$ is also in the right subtree of $i$, so we have $j \lhd_{T_{\pi_2}} i$.

\end{proof}
We can now characterize the intervals of the Tamari lattice that come from intervals in the lattice of noncrossing partitions.
\begin{theorem}\label{theo_nc}
Let $n \in \mathbb{N}$. Let $I$ be an interval of the Tamari lattice $\Tam_n$. Then, there is an interval of noncrossing partitions $[\pi_1,\pi_2]$ such that $I = [T_{\pi_1},T_{\pi_2}]$ if and only if the interval-poset corresponding to $I$ is exceptional. 
\end{theorem}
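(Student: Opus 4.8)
The plan is to reduce the equivalence to a single implication and then close the argument by a counting shortcut. Since $\pi\mapsto T_\pi$ is a bijection, the minimum $S$ and maximum $T$ of $I=[S,T]$ force $\pi_1=\pi_S$ and $\pi_2=\pi_T$; hence ``$I$ comes from an interval of noncrossing partitions'' is equivalent to the single condition $\pi_S\leqslant\pi_T$ in $\NC_n$. By Lemma \ref{refinement} the assignment $[\pi_1,\pi_2]\mapsto[T_{\pi_1},T_{\pi_2}]$ is a well-defined injection from intervals of $\NC_n$ to intervals of $\Tam_n$ (injective because the endpoints recover $\pi_1,\pi_2$). The intervals of noncrossing partitions and the exceptional intervals are both counted by $\frac{1}{2n+1}\binom{3n}{n}$ (Kreweras--Edelman on one side, the bijection with noncrossing trees of this section on the other). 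Therefore it suffices to prove the one inclusion \emph{if $\pi_S\leqslant\pi_T$ then the interval-poset $P$ of $[S,T]$ is exceptional}: the reverse inclusion, and the full equivalence, follow since an injection between finite sets of equal cardinality whose image lies in a set of that same cardinality is onto it.

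For the key direction I would first set up a dictionary. For a binary search tree $U$ and a vertex $i$, write $l_i^U$ and $r_i^U$ for the smallest and largest labels in the subtree rooted at $i$; the subtree of $i$ is the integer interval $[l_i^U,r_i^U]$, and these intervals form a laminar family. Two facts drive everything. First, the blocks of $\pi_U$ are exactly the right spines of $U$, and a right spine of a subtree ends at its maximum; consequently $i$ and $j$ lie in the same block of $\pi_U$ if and only if $r_i^U=r_j^U$. Second, because $S\leqslant T$, Proposition \ref{char_tamari} gives $\mathrm{Dec}(S)\subseteq\mathrm{Dec}(T)$, so every subtree of $S$ sits inside the corresponding subtree of $T$. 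I would then record the local meaning of a forbidden configuration: if $y$ has a decreasing cover $x$ and an increasing cover $z$ in $P$ (so $x<y<z$, with $y\lhd_S x$ and $y\lhd_T z$), the covers cannot be absorbed, which forces $z>r_x^S$ and $x<l_z^T$. Indeed, if $z\leqslant r_x^S$ then $z\lhd_S x$, producing $y\lhd z\lhd x$ and destroying the cover $y\to x$; symmetrically, if $x\geqslant l_z^T$ then $x\lhd_T z$, producing $y\lhd x\lhd z$ and destroying the cover $y\to z$.

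The heart of the proof is then a laminarity contradiction. Assume $\pi_S\leqslant\pi_T$ and, for contradiction, a forbidden configuration at $y$ with covers $x,z$ as above, and set $R:=r_x^S$. Since $y\lhd_S x$ forces $R>x$, and $R$ is the maximum of the subtree of $x$ in $S$, we have $r_x^S=r_R^S=R$, so $x$ and $R$ lie in one block of $\pi_S$; by $\pi_S\leqslant\pi_T$ they lie in one block of $\pi_T$, that is $r_x^T=r_R^T$. Next I locate $R$ inside $T$: from $R\lhd_S x$ and $\mathrm{Dec}(S)\subseteq\mathrm{Dec}(T)$ we get $R\lhd_T x$, so $R$ is in the subtree of $x$ in $T$; and since $l_z^T\leqslant y\leqslant R=r_x^S<z$, the element $R$ lies in the left subtree of $z$, whence $r_R^T\leqslant z-1$. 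Combining, $l_x^T\leqslant x<l_z^T$ and $r_x^T=r_R^T<z\leqslant r_z^T$, so the two subtree intervals $[l_x^T,r_x^T]$ and $[l_z^T,r_z^T]$ of $T$ both contain $R$ yet neither contains the other, contradicting laminarity. Hence no forbidden configuration exists and $P$ is exceptional.

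The main obstacle I anticipate is exactly the cover bookkeeping across \emph{two} trees: a decreasing relation coming from $S$ and an increasing relation coming from $T$ need not be covers of the combined poset $P$, and the crux is to show that when they genuinely are covers the survival inequalities $z>r_x^S$ and $x<l_z^T$ hold, after which laminarity of subtree intervals in $T$ gives the contradiction. The other point to pin down carefully is the dictionary fact that the blocks of $\pi_U$ are precisely the classes on which $r_\bullet^U$ is constant; granting this, everything reduces to routine manipulation of the intervals $[l_i^U,r_i^U]$ together with the inclusion $\mathrm{Dec}(S)\subseteq\mathrm{Dec}(T)$.
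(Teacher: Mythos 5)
Your proposal is correct, and at the top level it follows the paper's proof exactly: both prove only the implication ``induced from an interval of $\NC_n$ $\Rightarrow$ exceptional interval-poset'' and then close the equivalence by the same counting shortcut (Kreweras--Edelman for intervals of $\NC_n$, the noncrossing-tree bijection of this section for exceptional interval-posets, injectivity because endpoints recover the partitions). The genuine difference is in how that implication is proved. The paper splits into two dual cases according to which of $y\lhd x$, $y\lhd z$ is taken as a cover: a maximality argument in the decreasing forest shows $y$ is the rightmost child of $x$, hence the right son of $x$ in $T_{\pi_1}$, i.e.\ the successor of $x$ in its block of $\pi_1$; block-merging keeps $y$ in $x$'s block of $\pi_2$, and a laminarity-type argument inside $T_{\pi_2}$ then produces the relation $x\lhd z$, so $y\lhd z$ was not a cover (the other case is dual). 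You instead track the block \emph{maximum} $R=r_x^S$ rather than the successor, extract the inequalities $z>r_x^S$ and $x<l_z^T$ from coverness, transport the single decreasing relation $R\lhd_S x$ into $T$ via $\mathrm{Dec}(S)\subseteq\mathrm{Dec}(T)$, and reach one laminarity contradiction with no case split. Both proofs rest on the same two pillars --- blocks of $\pi_S$ survive into $\pi_T$, and subtree label-sets of a binary search tree form a laminar family of intervals --- but your version is more symmetric in $x$ and $z$ and avoids the paper's cover/forest bookkeeping, at the price of first establishing the dictionary ``same block of $\pi_U$ $\Leftrightarrow$ equal $r^U$'', whereas the paper keeps the noncrossing-partition meaning (successor within a block) in plain sight. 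One small correction: your warm-up claim that $\mathrm{Dec}(S)\subseteq\mathrm{Dec}(T)$ makes ``every subtree of $S$ sit inside the corresponding subtree of $T$'' is false as stated, since increasing relations satisfy the reverse inclusion $\mathrm{Inc}(T)\subseteq\mathrm{Inc}(S)$; one only gets $r_i^S\leqslant r_i^T$, while possibly $l_i^S<l_i^T$ (already visible for $n=2$ with $S$ the left comb and $T$ the right comb). Fortunately your argument never uses the full claim --- it only transports the single relation $R\lhd_S x$, which is exactly $\mathrm{Dec}(S)\subseteq\mathrm{Dec}(T)$ --- so nothing breaks.
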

\begin{proof}
Let $\pi_1 \leqslant \pi_2$ be two noncrossing partitions. Let $I = [T_{\pi_1},T_{\pi_2}]$ be the corresponding interval in $\Tam_n$ and $P$ be the corresponding interval-poset. Let $x < y < z$ such that we have a relation $y \lhd x$ and $y\lhd z$. 

First assume that $y \lhd x$ is a cover relation. We will show that this imply the existence of a relation $x\lhd z$. This last relation implies that $y\lhd z$ is not a cover relation. We can assume that $y$ is the maximal element such that $y\lhd x$ is a cover relation and $y \lhd z$. Let $t \lhd x$ be a cover relation. If $z \leqslant t$, then by the interval-poset condition we have a relation $z\lhd x$ and the relation $y\lhd x$ becomes the composite of $y \lhd z$ and $z\lhd x$ contradicting the hypothesis. So the maximal element $t$ with a cover relation $t\lhd x$ is an element of $[y,z[$. By the interval-poset condition, we have $t\lhd x$, so by maximality we have $t = y$. 

In other terms, in the decreasing forest of $P$, the element $y$ is the right most child of $x$. So, using the bijection of Theorem \ref{bij} we see that $y$ is the right son of $x$ in the tree $T_{\pi_1}$. In terms of noncrossing partitions, this means that $y$ is the successor of $x$ in its block. Since the partial order relation for the noncrossing partitions is given by merging of blocks, we see that $y$ is still in the block of $x$ in $\pi_2$. This implies that $y$ is also in the right subtree of $x$ in $T_{\pi_2}$.

In the increasing forest of $P$ we have the relation $y \lhd z$ which means that $y$ is in the left subtree of $z$. Since $y$ is a right descendant of $x$, this implies that $x$ is in the right subtree of $z$. Using one more time the bijection of Theorem \ref{bij}, we have an increasing relation $x\lhd z$. 

We only sketch the proof when $y\lhd z$ is a cover relation. We can assume $y$ to be minimal for this property. This implies that $y$ is the most left child of $z$ in the increasing forest of $P$. So $y$ is the left son of $z$ in $T_{\pi_2}$. By the argument of Lemma \ref{refinement}, an increasing relation of $T_{\pi_2}$ is also an increasing relation of $T_{\pi_1}$. In particular, $y$ is in the left subtree of $z$ in $T_{\pi_1}$. The relation $y \lhd x$ in $P$ implies that $y$ is in the right subtree of $x$. Since it is also in the left subtree of $z$, this implies that $z$ is in the right subtree of $x$. So we have the relation $z \lhd x$ in $P$. 

We just proved that the interval-posets of the intervals of the Tamari lattice coming from intervals of non-crossing partitions are exceptional. The result follows from the fact that the number of exceptional interval-posets is the number of intervals in the poset of noncrossing partitions. 
\end{proof}
\section{New intervals and modern interval-posets}
In this section we introduce the notion of \emph{modern} interval-posets and we show that the modern interval-posets of size $n$ are in bijection with the new intervals of $\Tam_{n+1}$. Note that there is a shifting of the size by $1$. 
\subsection{New intervals of the Tamari lattice}
From now on, we will always assume that the leaves of the binary trees of $\Tam_n$ are labeled from left to right by the integers $1,2,\cdots, n+1$. Let $T \in \Tam_n$ and $S\in \Tam_k$. Let $1\leqslant i \leqslant n+1$. The binary tree $T\circ_i S$ is the tree of size $k+n$ obtained by grafting the root of $S$ on the $i$-th leaf of $T$. If $[S_1,T_1]$ is an interval of $\Tam_n$ and $[S_2,T_2]$ is an interval of $\Tam_k$, and $1\leqslant i \leqslant n+1$, then the tree $S_1\circ_i S_2$ is smaller than $T_1\circ_i T_2$. We say that the interval $[S_1 \circ_i S_2,T_1\circ_i T_2]$ is the $i$-th grafting of $[S_2,T_2]$ on $[S_1,T_1]$, and we denote it by $[S_1,T_1] \circ_i [S_2,T_2]$.

\begin{definition}
An interval of $\Tam_n$ is called \emph{new} if it cannot be obtained as the grafting of two intervals. 
\end{definition}
The new intervals were introduced by Chapoton in \cite{chapoton_interval}.

\begin{lemma}[Chapoton]\label{not_new}
An interval $[S,T]$ of $\Tam_n$ is new if and only if there is no pair of subtrees $(A,B)$ of $S$ and $T$ whose leaves are labelled by the same interval $[i,j]\neq [1,n+1]$. 
\end{lemma}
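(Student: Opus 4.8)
The plan is to prove the statement obtained by negating both sides of the biconditional, namely that $[S,T]$ is \emph{non-new} if and only if $S$ and $T$ admit subtrees $A$ and $B$ (each rooted at a trivalent vertex) carrying the same leaf interval $[i,j]\neq[1,n+1]$. The conceptual bridge is that grafting and contraction are mutually inverse: a nontrivial decomposition $[S,T]=[S_1,T_1]\circ_i[S_2,T_2]$ exhibits $S_2$ and $T_2$ as subtrees on a common leaf interval, while conversely a common subtree can be contracted to a single leaf to recover such a decomposition. Throughout I would pass from leaf intervals to vertex intervals via the in-order labelling: a subtree carrying the leaves $[i,j]$ is exactly the subtree whose trivalent vertices form the in-order interval $V=[i,j-1]$, and contracting it leaves the complementary vertex set $V'=\{1,\dots,i-1\}\cup\{j,\dots,n\}$.

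For the easy direction, suppose $[S,T]=[S_1,T_1]\circ_i[S_2,T_2]$ with both factors of positive size. If $S_2$ has size $n_2$, then the grafted copies of $S_2$ and $T_2$ occupy the same leaf interval $[i,i+n_2]$ in $S$ and $T$; since the complementary factor $[S_1,T_1]$ is nontrivial this interval is proper, so it differs from $[1,n+1]$ and supplies the required pair $(A,B)$.

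The substantive direction is the converse. Given $A\subseteq S$ and $B\subseteq T$ on a common interval $[i,j]\neq[1,n+1]$, I would set $S_2=A$, $T_2=B$ and let $S_1,T_1$ be obtained by contracting these subtrees to a leaf in position $i$. Then $S=S_1\circ_i S_2$ and $T=T_1\circ_i T_2$ hold by construction, and because $[i,j]\neq[1,n+1]$ both factors have positive size, so the decomposition is nontrivial. What remains is to verify that these are genuine intervals, i.e.\ that $A\leqslant B$ and $S_1\leqslant T_1$ in the appropriate Tamari lattices. For this I would invoke Proposition \ref{char_tamari} together with two restriction facts: $\lhd_S$ restricted to $V$ coincides with $\lhd_A$ (a full subtree is closed under taking descendants, so ancestry inside $A$ matches ancestry inside $S$), and $\lhd_S$ restricted to $V'$ coincides with $\lhd_{S_1}$ (contracting $A$ leaves the ancestry of the outside vertices untouched, since no vertex outside $A$ has an ancestor in $A$). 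The identical statements hold for $T$, $B$, $T_1$.

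The hard part will be making these restriction claims precise and checking that they are compatible with the increasing/decreasing split on which Proposition \ref{char_tamari} rests. Since the relabellings $V\to[1,j-i]$ and $V'\to[1,n-(j-i)]$ are order-preserving, each increasing relation restricts to an increasing relation and each decreasing one to a decreasing relation; hence $\mathrm{Dec}(A)=\mathrm{Dec}(S)|_V\subseteq\mathrm{Dec}(T)|_V=\mathrm{Dec}(B)$ and $\mathrm{Dec}(S_1)=\mathrm{Dec}(S)|_{V'}\subseteq\mathrm{Dec}(T)|_{V'}=\mathrm{Dec}(T_1)$, both inclusions flowing from $\mathrm{Dec}(S)\subseteq\mathrm{Dec}(T)$, which is guaranteed by $S\leqslant T$. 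Proposition \ref{char_tamari} then gives $A\leqslant B$ and $S_1\leqslant T_1$, so the decomposition $[S,T]=[S_1,T_1]\circ_i[A,B]$ is legitimate and $[S,T]$ is non-new, completing the proof.
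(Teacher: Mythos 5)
Your proof is correct and takes essentially the same route as the paper's: a nontrivial grafting decomposition corresponds exactly to a pair of subtrees of $S$ and $T$ on a common proper leaf interval, with contraction of the common subtree as the inverse operation. The only difference is one of detail: you verify via Proposition \ref{char_tamari} (restriction of decreasing relations to $V$ and $V'$) that $[A,B]$ and $[S_1,T_1]$ really are Tamari intervals, a point the paper's much terser proof treats as immediate.
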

\begin{proof}
If there is a subtree $A$ of $S$ whose leaves are labelled by $[i,j]$ and a subtree $B$ of $T$ whose leaves are also labelled by $[i,j]$, then $S$ is of the form $S_1 \circ_i A$ and $T$ is of the form $T_1 \circ_i B$, so the interval is not new. Conversely, if the interval is not new, then $[S,T] = [S_1,T_1] \circ_i [A,B]$. So there is a pair of subtrees $(A,B)$ of $S$ and $T$ whose leaves are labelled by the same interval $[i,i+\mathrm{size}(S)]$. 
\end{proof}
With this criterion, it is easy to see that the new intervals of $\Tam_n$ have a \emph{nice shape}.
\begin{lemma}\label{new_shape}
Let $n\leqslant 1$. Let $[S,T]$ be a new interval of $\Tam_n$. Then, there are two binary trees $S_1$ and $T_1$ in $\Tam_{n-1}$ such that $S = Y\circ_1 S_1$ and $T = Y\circ_2 T_1$ where $Y$ is the unique binary tree of size $1$. 
\end{lemma}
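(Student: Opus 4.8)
The plan is to argue by contradiction using the subtree criterion of Lemma \ref{not_new} together with the order characterization of Proposition \ref{char_tamari}. The case $n=1$ is trivial (both trees equal $Y$, and $S_1,T_1\in\Tam_0$), so I assume $n\ge 2$. It suffices to prove that the root of $S$ has a trivial right subtree, i.e. $S=Y\circ_1 S_1$; the statement $T=Y\circ_2 T_1$ then follows by applying this to the left/right mirror of $[S,T]$, which is again new because the symmetry of Lemma \ref{symmetry} takes graftings to graftings.

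The technical heart will be a characterization of the subtrees hanging along the right spine in terms of decreasing relations: for $2\le a\le n$, a binary tree $X$ has a subtree whose leaves are exactly $[a,n+1]$ if and only if $n\lhd_X (a-1)$, i.e. the relation $(a-1,n)$ belongs to $\mathrm{Dec}(X)$. The point is that, under the in-order labelling, the right subtree of a vertex carries a contiguous block of labels starting just after it; hence $n\lhd_X(a-1)$ forces the right subtree of $a-1$ to consist of exactly the labels $\{a,\dots,n\}$, whose leaf-span is $[a,n+1]$. Symmetrically, $[1,b]$ is a subtree span of $X$ if and only if $1\lhd_X b$, i.e. $(1,b)\in\mathrm{Inc}(X)$; this is the form used for the mirror statement.

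Granting this, suppose $S\ne Y\circ_1 S_1$. Then the right subtree $R$ of the root of $S$ is a genuine subtree (of size $\ge 1$): the left subtree of the root spans leaves $[1,p]$, so the root has in-order label $p$, and $R$ spans leaves $[p+1,n+1]$ with $p\le n-1$. Since the maximal vertex $n$ lies in $R$ and $n>p$, we read off the decreasing relation $n\lhd_S p$, that is $(p,n)\in\mathrm{Dec}(S)$. By Proposition \ref{char_tamari}, $S\le T$ gives $\mathrm{Dec}(S)\subseteq\mathrm{Dec}(T)$, so $(p,n)\in\mathrm{Dec}(T)$; applying the characterization to $T$ with $a=p+1\ge 2$ produces a subtree of $T$ with the same leaf-span $[p+1,n+1]$. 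This span is proper and of size $\ge 1$, so $S$ and $T$ carry a pair of subtrees with a common interval $[p+1,n+1]\ne[1,n+1]$, and by Lemma \ref{not_new} the interval $[S,T]$ is not new, a contradiction. Hence $S=Y\circ_1 S_1$.

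The main obstacle, and the reason a naive argument fails, is that the subtree of $T$ matching $R$ need not sit at the same place: the vertex $n$ may be much deeper on the right spine of $T$ than on that of $S$, so there is no vertex-by-vertex correspondence to exploit. What resolves this is that the single decreasing relation $(p,n)$ is transported verbatim by $S\le T$, and the characterization lemma converts it back into the existence of a subtree of $T$ with the desired span, wherever that subtree happens to be. Proving the characterization (in fact only its ``if'' direction is needed) is the one place requiring the precise behaviour of the in-order labelling, and it is elementary.
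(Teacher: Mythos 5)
Your proof is correct, and it reaches the paper's conclusion by a somewhat different mechanism. Both arguments finish the same way, by exhibiting a pair of subtrees of $S$ and $T$ with a common proper leaf interval and invoking Lemma \ref{not_new}, and both handle $T$ by the left/right symmetry; the difference lies in how the matching subtree of $T$ is produced. The paper argues directly on covering relations: a cover is a left rotation, vertices on the right side of $S$ persist (with their leaf spans) under left rotations, hence a subtree with the same span reappears in $T$. You instead encode the root's right subtree of $S$ as the single decreasing relation $n \lhd_S p$, transport it to $T$ via Proposition \ref{char_tamari} (i.e.\ $\mathrm{Dec}(S)\subseteq\mathrm{Dec}(T)$), and decode it back into a subtree of $T$ with leaf span $[p+1,n+1]$ using the in-order labelling (your span characterization is essentially Lemma \ref{label_subtree} plus the contiguity of right-subtree labels). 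The two routes are close in substance, since Proposition \ref{char_tamari} is itself a distillation of rotation-invariance, but yours is the more careful rendering: the paper's claim that a right-side vertex ``will be fixed by any left rotation, so it will also appear at the same place in $T$'' is imprecise as stated (a rotation at a right-spine vertex does change that vertex's subtree; what persists is the collection of spans along the right spine), whereas your relation-chasing sidesteps this and matches the style the paper adopts later in Lemmas \ref{red1}, \ref{red2} and \ref{imp1}. One point to flag: your symmetry reduction needs the fact that the mirror of a new interval is new, which you correctly justify by compatibility of mirroring with grafting; your alternative of running the increasing-relation argument ($1\lhd_T q+1$ transported into $\mathrm{Inc}(S)$) directly on $T$ would avoid even that step.
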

\begin{proof}
The covering relation for the Tamari lattice is the left rotation. So if there is a vertex on the right side of $S$, it will be fixed by any left rotation, so it will also appear at the same place in the tree $T$. Similarly, if there is a vertex on the left side of $T$ it must also be at the same place in $S$. So the subtrees with root $s$ have the same interval of leaves. Using Lemma \ref{not_new}, we see that the interval $[S,T]$ is not new in both cases.
\end{proof}
However, it is easy to see that there are some intervals with this nice shape but which are not new. We will characterize the new intervals in this family in Theorem \ref{new_chara}.
\subsection{Rising and falling interval-posets}
\begin{definition}\label{def_modern}
Let $n \in \mathbb{N}$. An interval-poset of size $n$ is modern if it does not contain any configuration of the form $x\lhd y$ and $z\lhd y$ with $x < y < z$. 
\end{definition}
Let us remark that unlike Definition \ref{exc}, the forbidden configuration here involves \emph{all the relations} and not only the relations in the Hasse diagram of the poset.

Let us introduce the \emph{rise} of a set with a reflexive binary relation\footnote{The rise of an interval-poset needs not to be an interval-poset, so in order to be able to take successive rises we need a more general setting.}. If $P = \{1,2,\cdots, n\}$ is a set with a reflexive binary relation $\lhd$, then $(\Ri(P),\lhd_R)$ is the set $\{1,2,\cdots, n+1\}$ with the binary relation $\lhd_R$ defined by keeping all decreasing relations of $P$ and shifting by $1$ all the increasing relations of $P$. More precisely, the relation $\lhd_R$ is reflexive and for $x<y \leqslant n$, we have $ y \lhd_R x $ if and only if $y\lhd x$. For $1 < x < y \leqslant n+1$ we have $x\lhd_R y$ if and only if $x-1 \lhd y-1$. For an example, see Figure \ref{ex_rise}. 

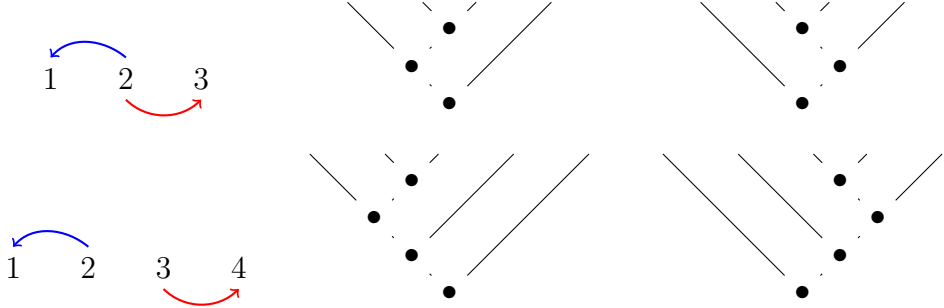
\begin{figure}[h]
\begin{tabular}{ccc}

\begin{tikzpicture}
\node (1) at (0,0) {1}; \node (2) at (1,0) {2}; \node (3) at (2,0) {3};
\draw[blue,thick,->] (2.north) to  [out=140,in=50] (1.north);
\draw[red,->,below, thick] (2.south) to [out=-45,in=225] (3.south);
\end{tikzpicture}
&
\begin{tikzpicture}
\node (1) at (-1,1) {} ; \node (2) at (0,1) {}; \node (3) at (1,1) {} ; \node (4) at (2,1) {}; \node (5) at (0.5,-0.5) {$\bullet$}; \node (6) at (0,0) {$\bullet$}; \node (7) at (0.5,0.5) {$\bullet$};
\draw (1)--(6)--(5)--(4) 
	 (6)--(7)--(2)
	 (7)--(3);
\end{tikzpicture}
&
\begin{tikzpicture}
\node (1) at (-1,1) {} ; \node (2) at (0,1) {}; \node (3) at (1,1) {} ; \node (4) at (2,1) {}; \node (5) at (0.5,-0.5) {$\bullet$}; \node (6) at (1,0) {$\bullet$}; \node (7) at (0.5,0.5) {$\bullet$}; 
\draw (1)--(5)--(6)--(4)
	  (6)--(7)--(2)
	  (7)--(3); 	 
\end{tikzpicture}
\\
\begin{tikzpicture}
\node (1) at (0,0) {1}; \node (2) at (1,0) {2}; \node (3) at (2,0) {3}; \node (4) at (3,0) {4};
\draw[blue,thick,->] (2.north) to  [out=140,in=50] (1.north);
\draw[red,->,below, thick] (3.south) to [out=-45,in=225] (4.south);
\end{tikzpicture}
&
\begin{tikzpicture}
\node (1) at (-2,1) {} ; \node (2) at (-1,1) {}; \node (3) at (0,1) {} ; \node (4) at (1,1) {}; \node (5) at (2,1) {}; \node (r) at (0,-1) {$\bullet$}; \node (a) at (-0.5,-0.5) {$\bullet$}; \node (b) at (-1,0) {$\bullet$}; \node (c) at (-0.5,0.5) {$\bullet$};
\draw (1)--(b)--(a)--(r)--(5)
	   (b)--(c)--(3)
	   (c)--(2)
	  (a)--(4);
\end{tikzpicture}
&
\begin{tikzpicture}
\node (1) at (-2,1) {} ; \node (2) at (-1,1) {}; \node (3) at (0,1) {} ; \node (4) at (1,1) {}; \node (5) at (2,1) {}; \node (r) at (0,-1) {$\bullet$}; \node (a) at (0.5,-0.5) {$\bullet$}; \node (b) at (1,0) {$\bullet$}; \node (c) at (0.5,0.5) {$\bullet$};
\draw (1)--(r)--(a)--(b)--(5)
	   (b)--(c)--(4)
	   (c)--(3)
	  (a)--(2);
\end{tikzpicture}
\end{tabular}
\caption{One the top an interval-poset of size $3$ and its corresponding interval of $\Tam_3$. On the bottow, the rise of this interval-poset and the corresponding interval of $\Tam_4$.}\label{ex_rise}
\end{figure}
\begin{lemma}\label{montee}
Let $(P,\lhd)$ be an interval-poset of size $n$. Then, the rise of $P$ is an interval-poset if and only if $P$ is modern. 
\end{lemma}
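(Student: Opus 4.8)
The plan is to reduce the statement to a question purely about the partial-order axioms. First I would observe that the \emph{interval-poset conditions} (1) and (2) are preserved by the rise for \emph{any} interval-poset $P$, modern or not. Indeed, an increasing relation $a \lhd_R c$ of $\Ri(P)$ is, by definition, the shift of an increasing relation $(a-1) \lhd (c-1)$ of $P$; applying condition (1) in $P$ to the intermediate indices and shifting back gives condition (1) in $\Ri(P)$. Symmetrically, the decreasing relations of $\Ri(P)$ are literally the decreasing relations of $P$, so condition (2) transfers verbatim. Since $\lhd_R$ is reflexive by construction, the entire content of the lemma becomes the assertion that $\lhd_R$ is \emph{antisymmetric and transitive} (i.e. a partial order) if and only if $P$ is modern.

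For the direction ``$\Ri(P)$ interval-poset $\Rightarrow$ $P$ modern'' I would argue the contrapositive using the Remark on small relations. If $P$ is not modern, there are $x < y < z$ with $x \lhd y$ increasing and $z \lhd y$ decreasing. The small-relation remark yields $(y-1) \lhd y$ and $(y+1) \lhd y$ in $P$. The first is increasing, hence rises to $y \lhd_R (y+1)$; the second is decreasing, hence is kept as $(y+1) \lhd_R y$. These two relations violate antisymmetry, so $\lhd_R$ is not a partial order and $\Ri(P)$ is not an interval-poset. (The boundary indices are fine: $y\geqslant 2$ since $x<y$, and $y+1\leqslant z\leqslant n$.)

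For the converse I would assume $P$ modern and verify the two axioms by a case analysis driven by the signs (increasing/decreasing) of the relations and the relative order of their endpoints. Antisymmetry reduces to ruling out a pair $u \lhd_R v$, $v \lhd_R u$ with $u < v$; unwinding the definitions this means $(u-1)\lhd(v-1)$ increasing and $v \lhd u$ decreasing in $P$. When $v = u+1$ this is exactly the forbidden modern configuration at $y = u$, and when $v \geqslant u+2$ conditions (1) and (2) force both $u \lhd (v-1)$ and $(v-1) \lhd u$ in $P$, contradicting antisymmetry of $P$ itself. Transitivity splits into four cases according to the signs of $a \lhd_R b$ and $b \lhd_R c$; the two cases where both relations have the same sign follow immediately from transitivity of $P$ composed with the shift. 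The essential cases are the mixed ones, where I would compare $a$ and $c$: whenever the desired relation $a \lhd_R c$ points in the ``easy'' direction it is produced directly by condition (1) or condition (2) applied to the longer of the two given relations, and in the remaining sub-cases the hypotheses turn out to be incompatible — either with antisymmetry of $P$ (when the relevant gap exceeds $1$) or with modernity (when the gap equals $1$, yielding once more the forbidden configuration).

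The main obstacle is the transitivity argument in these mixed cases: one has to track carefully which intermediate index to feed into conditions (1)/(2), and to separate, for each configuration, the sub-case that genuinely produces $a \lhd_R c$ from the sub-cases that are vacuous. The vacuity itself has two distinct sources — a latent antisymmetry violation inside $P$ versus a modern violation — and keeping the boundary constraints straight (an increasing relation of $\Ri(P)$ can start only at an index $\geqslant 2$, a decreasing one can live only on $\{1,\dots,n\}$) is where most of the care is needed. The small-relation remark is the recurring device that converts a ``gap $=1$'' configuration into the modern-forbidden pattern, so I expect it to be the engine of essentially every non-trivial sub-case.
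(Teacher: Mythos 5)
Your proposal is correct and follows essentially the same route as the paper's proof: the automatic transfer of the interval-poset conditions under the shift, the small-relation remark turning a non-modern configuration into the antisymmetry violation $y \lhd_R y+1$, $y+1 \lhd_R y$, the antisymmetry analysis splitting on whether the gap is $1$ (modern violation) or larger (contradiction with antisymmetry of $P$), and transitivity reduced to the mixed-sign cases handled by conditions (1)/(2) plus the antisymmetry argument. The paper's treatment of the mixed transitivity cases is exactly the sub-case comparison of endpoints you describe, so no genuinely different idea is involved.
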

\begin{proof}
Since the rising operation only shifts the increasing relations of $P$, it is clear that the relation $\lhd_R$ satisfies the two conditions of interval-poset.

If the interval-poset $P$ is not modern, there is a configuration of the form $x\lhd y$ and $z\lhd y$ with $x < y < z$. The condition of interval-poset implies the existence of the two relations $y-1 \lhd y$ and $y+1\lhd y$. It is clear that the rise of $P$ is not a poset since we have $y\lhd_R y+1$ and $y+1 \lhd_R y$. 

If the interval-poset $P$ is modern, we need to see that $\Ri(P)$ is a poset. If we have in $\Ri(P)$ two elements $x<y$ such that $x\lhd_R y$ and $y\lhd_R x$. Then, in $P$ we have $y\lhd x$ and $x-1 \lhd y-1$. Since $x-1 < x \leqslant y-1$, the condition of interval-poset of $P$ implies that we have a relation $x \lhd y-1$. Similarly since $x\leqslant y-1 < y$, the interval-poset condition implies that we have a relation $y-1 \lhd x$. Since $P$ is a poset, we have $x = y-1$, and we see that the relations $x\lhd_R y$ and $y\lhd_R x$ come from the relations $y-2 \lhd y-1$ and $y \lhd y-1$ in $P$. In other words, the interval-poset $P$ is not modern.

Let us assume that $\Ri(P)$ contains two relations $x\lhd_R y$ and $y\lhd_R z$ but does not contain the relation $x \lhd_R z$. Since increasing relations and decreasing relations come from $P$, it is clear that such a situation implies that one of the two relations is increasing, and the second one is decreasing. If the relation $x \lhd_R y$ is increasing, there are two possibilities: either $z$ is before $x$, or $z$ is between $x$ and $y$. If $z$ is before $x$ the relation $y\lhd_R z$ and the interval-poset condition imply the existence of a relation $x\lhd_R z$. Otherwise, the interval-poset condition implies the existence of a relation $z \lhd_R y$, which by the argument above implies that $P$ is not modern. The case where $x\lhd_R y$ is decreasing is similar. 

\end{proof}
\begin{definition}\label{new_ip}
An interval-poset $P$ of size $n$ is called \emph{new} if it has no increasing relation starting at $1$, no decreasing relation starting at $n$ and no relations of the form $i+1 \lhd_P j+1$ and $j\lhd_P i$ for $i < j$.
\end{definition}
Let us define the \emph{fall}  of an interval poset $(P,\lhd)$ of size $n$ with no increasing relation starting at $1$ and no decreasing relation starting at $n$. This is the poset $(\Fa(P),\lhd_F)$ where $\Fa(P)$ is the set $\{1,2,\cdots, n-1\}$ and the relation $\lhd_F$ is the relation obtained by keeping the decreasing relations and shifting by $-1$ the increasing relations. More precisely, $\lhd_F$ is reflexive and for $x<y$, we have $y\lhd_F x$ if and only if $y\lhd x$ and $x \lhd_F y$ if and only if $x+1 \lhd y+1$. 

\begin{lemma}
Let $P$ be an interval-poset of size $n$ with no increasing relation starting at $1$ and no decreasing relation starting at $n$. Then the fall of $P$ is an interval-poset if and only if $P$ is new. 
\end{lemma}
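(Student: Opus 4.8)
The plan is to mimic the proof of Lemma \ref{montee}, using that the fall, like the rise, merely translates the increasing relations (here by $-1$) while leaving the decreasing relations untouched. Since $\Fa(P)$ is already defined under the standing hypotheses (no increasing relation starting at $1$, so every increasing relation $a\lhd c$ shifts to $a-1\lhd_F c-1$ inside $\{1,\dots,n-1\}$; no decreasing relation starting at $n$, so every decreasing relation stays in range), I would first record that the two interval-poset conditions hold for $\Fa(P)$ automatically: condition $(2)$ transfers verbatim because the decreasing relations are unchanged, and condition $(1)$ transfers because the increasing relations are all shifted by the same amount. Hence $\Fa(P)$ is an interval-poset if and only if the relation $\lhd_F$ is a partial order, i.e.\ antisymmetric and transitive.

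Next I would analyze antisymmetry, which is where the pattern in Definition \ref{new_ip} surfaces. A failure of antisymmetry means $x\lhd_F y$ and $y\lhd_F x$ for some $x<y$; by the definition of $\lhd_F$ the first relation is increasing and comes from $x+1\lhd y+1$ in $P$, while the second is decreasing and comes from $y\lhd x$ in $P$. Taking $i=x$, $j=y$, this is exactly a pattern $i+1\lhd j+1$ and $j\lhd i$ with $i<j$. Since the hypotheses already supply the first two clauses of Definition \ref{new_ip}, the relation $\lhd_F$ is antisymmetric if and only if $P$ is new. This yields the forward implication at once (an interval-poset is in particular a poset, hence antisymmetric, hence $P$ is new) and half of the converse.

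It then remains, for the converse, to verify transitivity of $\lhd_F$ assuming $P$ is new, and I would argue by cases on the types of two relations $x\lhd_F y$ and $y\lhd_F z$. When both are increasing or both are decreasing, $x\lhd_F z$ follows immediately from transitivity of $\lhd$ in $P$ (after undoing the common shift in the increasing case). The substance is in the mixed cases; take $x\lhd_F y$ increasing and $y\lhd_F z$ decreasing (the other mixed case being symmetric), so $x<y$ and $z<y$, and split on the position of $z$ relative to $x$. If $z<x$, then applying interval-poset condition $(2)$ to the decreasing relation $y\lhd z$ of $P$ gives $x\lhd z$, hence $x\lhd_F z$; if $x<z$, then applying interval-poset condition $(1)$ to the increasing relation $x+1\lhd y+1$ gives $z+1\lhd y+1$, which together with $y\lhd z$ is precisely a forbidden pattern $i+1\lhd j+1$, $j\lhd i$ with $i=z$, $j=y$, so this subcase cannot occur for a new $P$. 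The main obstacle is exactly this bookkeeping: correctly determining, in each mixed subcase, whether the interval-poset conditions of $P$ directly produce the missing relation or instead force the forbidden pattern that newness excludes. Once these cases are settled, $\lhd_F$ is a partial order and $\Fa(P)$ is an interval-poset, completing the proof.
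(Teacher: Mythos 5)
Your proof is correct and matches the paper's approach: the paper's own proof is literally ``This is a straightforward checking,'' and your argument supplies exactly that checking by mirroring the proof of Lemma \ref{montee} (antisymmetry failures of $\lhd_F$ correspond precisely to the forbidden pattern of Definition \ref{new_ip}, and transitivity reduces to the two mixed increasing/decreasing cases). Your appeal to symmetry for the second mixed case is legitimate: there the roles of the subcases swap (now $z<x$ forces the forbidden pattern $y+1\lhd z+1$, $z\lhd y$, while $x<z$ yields $x+1\lhd z+1$ directly from the interval-poset condition), so the argument goes through as claimed.
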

\begin{proof}
This is a straightforward checking. 
%
%
%
\end{proof}
\begin{lemma}\label{r/f}
The rising/falling operations induce a bijection between the set of modern interval-posets of size $n$ and the set of new interval-posets of size $n+1$. 
\end{lemma}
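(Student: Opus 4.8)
The strategy is to prove that $\Ri$ and $\Fa$ are mutually inverse and that they interchange the two families. Two facts make them well defined on the relevant sets: by Lemma~\ref{montee} the rise of a modern interval-poset is again an interval-poset, and by the preceding lemma (the fall of $P$ is an interval-poset if and only if $P$ is new) the fall of a new interval-poset is again an interval-poset. Recall also that a new interval-poset of size $n+1$ has, by definition, no increasing relation starting at $1$ and no decreasing relation starting at $n+1$, so its fall is defined. It then suffices to check three things: that $\Ri$ maps modern interval-posets of size $n$ to new interval-posets of size $n+1$, that $\Fa$ maps new interval-posets of size $n+1$ to modern ones of size $n$, and that the two operations are inverse to one another.

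For the first point, let $P$ be a modern interval-poset of size $n$. By Lemma~\ref{montee}, $\Ri(P)$ is an interval-poset of size $n+1$, and I would verify the three defining conditions of newness. The first two are immediate from the construction of the rise: every increasing relation of $\Ri(P)$ is the shift of an increasing relation of $P$ and hence starts at an index $\geqslant 2$, so there is no increasing relation starting at $1$; and every decreasing relation of $\Ri(P)$ is a decreasing relation of $P$, hence involves only indices $\leqslant n$, so none starts at $n+1$. For the third condition, suppose $\Ri(P)$ had relations $i+1 \lhd_R j+1$ and $j \lhd_R i$ with $i<j$. Unwinding the rise, the increasing relation $i+1 \lhd_R j+1$ comes from $i \lhd j$ in $P$, while the decreasing relation $j \lhd_R i$ is the relation $j \lhd i$ of $P$; together they violate the antisymmetry of the poset $P$. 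Hence $\Ri(P)$ is new.

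For the converse, let $Q$ be a new interval-poset of size $n+1$. By the preceding lemma, $\Fa(Q)$ is an interval-poset of size $n$, and I would show it is modern. Suppose it contained a forbidden configuration $x \lhd_F y$ and $z \lhd_F y$ with $x<y<z$. Translating back through the fall, $x \lhd_F y$ is increasing and comes from $x+1 \lhd y+1$ in $Q$, which by the interval-poset condition $(1)$ forces the small relation $y \lhd y+1$ in $Q$; and $z \lhd_F y$ is decreasing and is the relation $z \lhd y$ of $Q$, which by the interval-poset condition $(2)$ forces the small relation $y+1 \lhd y$ in $Q$. The relations $y \lhd y+1$ and $y+1 \lhd y$ contradict the antisymmetry of $Q$, so no such configuration exists and $\Fa(Q)$ is modern.

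Finally, since the rise shifts the increasing relations up by one while keeping the decreasing relations, and the fall shifts the increasing relations down by one while keeping the decreasing relations, the two operations undo one another on the underlying reflexive relations; thus $\Fa(\Ri(P)) = P$ and $\Ri(\Fa(Q)) = Q$ straight from the definitions. Combined with the two maps just exhibited, this shows that $\Ri$ and $\Fa$ restrict to mutually inverse bijections between the modern interval-posets of size $n$ and the new interval-posets of size $n+1$. I do not expect a real obstacle here: the only substantive inputs are Lemma~\ref{montee} and the fall lemma, already available, and both preservation statements collapse to antisymmetry once the relevant small relations are produced. The one point that deserves care is the bookkeeping of domains, namely checking that the newness conditions $(1)$ and $(2)$ are exactly what guarantees $\Fa$ is defined on $\Ri(P)$, so that the composite $\Fa\circ\Ri$ makes sense and returns $P$ on the full index set $\{1,\dots,n\}$.
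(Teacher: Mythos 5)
Your proof is correct and takes essentially the same route as the paper's: both directions collapse to the antisymmetry of the underlying poset once the forbidden configurations (newness pattern for $\Ri(P)$, modern pattern plus the small relations for $\Fa(Q)$) are unwound through the shift, and the two operations are inverse by construction. The only cosmetic differences are that you apply the interval-poset condition inside $Q$ before unwinding while the paper applies it inside $\Fa(P)$ and then pulls back, and that you spell out the bookkeeping (the first two newness conditions for $\Ri(P)$, well-definedness of the fall) that the paper leaves implicit.
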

\begin{proof}
The only way to have two relations $i+1 \lhd_R j+1$ and $j\lhd_R i$ for $i < j$ in $\Ri(P)$ is to have $i \lhd j$ and $j\lhd i$ in $P$, so the rise of a modern interval-poset is new. Similarly, the fall of a new interval-poset $P$ is modern since the forbidden pattern leads to the existence of relations $y-1 \lhd_F y$ and $y+1 \lhd_F y$ that must come from $y+1 \lhd y$ and $y \lhd y+1$ in $P$. 
 
Moreover, it is obvious that the rising and falling operations are inverse of each other. 
\end{proof}

\begin{proposition}\label{falling}
Let $[S,T]$ be an interval of $\Tam_{n+1}$. Let $P$ be the corresponding interval-poset. Then $P$ is new if and only if there is an interval $[S_1,T_1]$ of $\Tam_{n}$ such that $S = Y\circ_{1} S_1$ and $T=Y \circ_{2} T_1$.
\end{proposition}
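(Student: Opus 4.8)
The plan is to reduce the statement to the rise/fall machinery of Lemmas \ref{montee}, \ref{r/f} and the lemma characterising when $\Fa(P)$ is an interval-poset; the only genuinely new ingredient is the translation of the grafting shape of $[S,T]$ into conditions on $\mathrm{Dec}(P)$ and $\mathrm{Inc}(P)$.

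First I would record two observations about the in-order labelling. If $S = Y\circ_1 S_1$, then the right subtree of the root of $S$ is empty, so the root is the last vertex visited and carries the label $n+1$; hence no decreasing relation of $S$ starts at $n+1$. Conversely, if no decreasing relation starts at $n+1$, then $n+1$ has no proper ancestor and is therefore the root, and being maximal it has empty right subtree, so $S = Y\circ_1 S_1$ with $S_1$ the left subtree of $S$. By the left/right symmetry (Lemma \ref{symmetry}), $T = Y\circ_2 T_1$ holds if and only if no increasing relation starts at $1$, with $T_1$ the right subtree of $T$. Since $\mathrm{Dec}(P) = \mathrm{Dec}(S)$ and $\mathrm{Inc}(P) = \mathrm{Inc}(T)$ by Theorem \ref{bij}, the existence of the shape $S = Y\circ_1 S_1$, $T = Y\circ_2 T_1$ (with $S_1,T_1$ then uniquely determined as the above subtrees) is exactly equivalent to the first two defining conditions of a new interval-poset in Definition \ref{new_ip}.

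Next I would compute, under these two conditions, the relations of $P$ in terms of $S_1$ and $T_1$. The vertices of the left subtree $S_1$ keep their in-order labels $1,\dots,n$, so $\mathrm{Dec}(S) = \mathrm{Dec}(S_1)$; the vertices of the right subtree $T_1$ receive the in-order labels $2,\dots,n+1$, so $\mathrm{Inc}(T)$ is the shift by $+1$ of $\mathrm{Inc}(T_1)$. Setting $P_1 = \mathrm{Dec}(S_1)\sqcup\mathrm{Inc}(T_1)$, this gives precisely $\Fa(P) = P_1$ (equivalently $P = \Ri(P_1)$). Moreover $P_1$ is a valid interval-poset if and only if $[S_1,T_1]$ is an interval of $\Tam_n$: one direction is Theorem \ref{bij}, and for the other, if $P_1$ is an interval-poset it corresponds to some interval whose decreasing relations are $\mathrm{Dec}(S_1)$ and increasing relations are $\mathrm{Inc}(T_1)$, forcing that interval to be $[S_1,T_1]$ by the uniqueness built into Proposition \ref{char_tamari}.

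Finally I would close both implications. Whenever the two conditions above hold, the lemma characterising the fall gives that $P$ is new if and only if $\Fa(P) = P_1$ is an interval-poset, i.e. if and only if $[S_1,T_1]$ is an interval of $\Tam_n$. For $(\Leftarrow)$ one starts from a genuine interval $[S_1,T_1]$, which yields the shape and makes $P_1$ a valid interval-poset, hence $P$ new; for $(\Rightarrow)$, newness of $P$ supplies the shape through its first two conditions and, via the fall lemma, makes $\Fa(P)=P_1$ an interval-poset, which is exactly the assertion that $[S_1,T_1]$ is an interval. The main obstacle is the $(\Rightarrow)$ direction: extracting the binary trees $S_1$ and $T_1$ is immediate, but the real content is that they form a valid interval, i.e. $S_1\leqslant T_1$, and this is exactly what the third defining condition of ``new'' buys, channelled cleanly through the fall lemma rather than through a direct verification with Proposition \ref{char_tamari}. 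The remaining work is the routine bookkeeping of the $\pm1$ label shifts produced by grafting on the two leaves of $Y$.
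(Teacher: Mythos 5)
Your proof is correct and follows essentially the same route as the paper's: translate the grafting shape into the first two conditions of Definition \ref{new_ip} via the in-order labelling (the paper argues through the root of the increasing forest, you through the label of the root — the same observation), verify by label bookkeeping that $\Fa(P) = \mathrm{Dec}(S_1)\sqcup\mathrm{Inc}(T_1)$, and conclude with the fall lemma, Theorem \ref{bij} and the symmetry Lemma \ref{symmetry}. The only cosmetic difference is that you identify the interval attached to $\Fa(P)$ as $[S_1,T_1]$ via injectivity of $\mathrm{Dec}$ coming from Proposition \ref{char_tamari}, whereas the paper reads the trees off the forest decomposition directly, and the paper handles the converse through the rise and Lemma \ref{r/f} rather than through the fall; these are interchangeable steps in the same argument.
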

\begin{proof}
First we show that there is no increasing relation starting at $1$ in $P$ if and only if there is a tree $T_1$ such that $T=Y \circ_{2} T_1$. Using the left/right symmetry and Lemma \ref{symmetry} we can deduce that there is no decreasing relation starting at $n+1$ in $P$ if and only if there is a tree $S_1$ such that $S = Y\circ_{1} S_1$. If there is an increasing relation starting at $1$, let $x$ the maximal element such that we have $1\lhd x$. Then, in the forest of increasing relations the first tree has root $x$ and $1$ is in this tree. So it is sent by the bijection of Theorem \ref{bij} to the binary tree $T$ which has a root $x$ and $1$ is in its left subtree. This implies that the root of $T$ has a left son and is not of the form $Y \circ_{2} T_1$. Conversely, if the root of $T$ has a left son, then the vertex labelled by $1$ is in the left subtree of $T$. Let $x$ be the label of the root of $T$. Then, we have an increasing relation $1\lhd x$ in $P$. 

If $P$ is new, then by the previous argument $S = Y\circ_{1} S_1$ and $T=Y \circ_{2} T_1$. Since $P$ is new, the fall of $P$ is defined. We will show that the interval corresponding to $\Fa(P)$ is $[S_1,T_1]$. Using the left/right symmetry and Lemma \ref{symmetry}, it is enough to show that the binary tree corresponding to the decreasing relations of $\Fa(P)$ is $S_1$. If $F$ denotes the forest of decreasing relations of $\Fa(P)$, then the decreasing forest of $P$ is $F\sqcup \{n+1\}$ where $n+1$ is the tree with only one vertex $n+1$. So, the tree corresponding to the decreasing relations of $\Fa(P)$ is the left subtree of the tree of $P$. In other words, it is the tree $S_1$.  

Since $\Fa(P)$ is an interval-poset, the trees $S_1$ and $T_1$ obtained by considering the decreasing relations and the increasing relations satisfy $S_1\leqslant T_1$ in $\Tam_n$. 

Conversely, if $[S_1,T_1]$ is an interval of $\Tam_{n}$ such that $[S,T]$ is an interval of $\Tam_{n+1}$ for $S = Y\circ_{1} S_1$ and $T=Y \circ_{2} T_1$. If we turn $T$ into a binary search tree by using the in-order algorithm, it is easy to see that the root of $T$ is labelled by $1$ and if $x$ is the label of a vertex of $T_1$, then this vertex is labelled by $1+x$ in $T$. In other words, the increasing relations of $T$ are the increasing relations of $T_1$ shifted by $1$. By symmetry we have that the interval-poset corresponding to $[S,T]$ is the rise of the interval-poset corresponding to $[S_1,T_1]$. By Lemma \ref{r/f}, the interval-poset corresponding to $[S,T]$ is new.  
\end{proof}
\subsection{Characterization of the new intervals}
This section is devoted to the proof of the following Theorem.
\begin{theorem}\label{not_new_theo}
An interval of the Tamari lattice is new if and only if the corresponding interval poset is new. 
\end{theorem}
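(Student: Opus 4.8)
The plan is to connect the combinatorial notion of \emph{new} interval (Definition via grafting, characterized in Lemma \ref{not_new}) with the poset-theoretic notion of \emph{new} interval-poset (Definition \ref{new_ip}), using the rise/fall machinery already developed. The key bridge is Proposition \ref{falling}, which tells us precisely which interval-posets correspond to intervals of the shape $[Y\circ_1 S_1, Y\circ_2 T_1]$: these are exactly the new interval-posets. So the strategy is to reduce the general statement to this ``top-level'' situation by induction on the size $n$, peeling off grafting structure one factor at a time.

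First I would set up the induction on $n$, with trivial base cases for small $n$. For the inductive step, take an interval $[S,T]$ of $\Tam_{n}$ with corresponding interval-poset $P$, and argue in both directions. For the forward direction, suppose $[S,T]$ is new. By Lemma \ref{new_shape} it has the shape $S=Y\circ_1 S_1$, $T=Y\circ_2 T_1$; Proposition \ref{falling} then immediately yields that $P$ is new. For the converse, suppose $P$ is new. By Proposition \ref{falling} again, $S=Y\circ_1 S_1$ and $T=Y\circ_2 T_1$ for some interval $[S_1,T_1]$ of $\Tam_{n-1}$, so at the top level no grafting on a proper right/left subtree of the root occurs. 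I must then rule out \emph{any} pair of subtrees $(A,B)$ of $S$ and $T$ sharing a common leaf-interval $[i,j]\neq[1,n+1]$, as in the criterion of Lemma \ref{not_new}.

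The main obstacle is exactly this converse: a forbidden common subtree could sit deep inside $S_1$ and $T_1$ rather than at the root, so the newness of $P$ at size $n$ does not obviously forbid it. The clean way to handle this is to translate ``$[S_1,T_1]$ is not new'' into a statement about $P$ via the fall operation. By Lemma \ref{r/f} and Proposition \ref{falling}, the interval-poset of $[S_1,T_1]$ is precisely $\Fa(P)$. If $[S,T]$ were not new while $P$ is new, the common subtree $(A,B)$ cannot be the whole right (resp.\ left) subtree of the root by Proposition \ref{falling}, so it lives inside $(S_1,T_1)$ and witnesses that $[S_1,T_1]$ is not new. By the induction hypothesis applied to $[S_1,T_1]$ in $\Tam_{n-1}$, its interval-poset $\Fa(P)$ would fail to be new; but $\Fa$ of a new interval-poset is new (Lemma \ref{r/f}), a contradiction. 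This inductive reduction, with the fall operation transporting the grafting structure down one level, is what makes the argument go through.

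There is one bookkeeping point to verify carefully: that a common leaf-interval $[i,j]$ of subtrees of $S$ and $T$ strictly inside the root's right/left subtrees corresponds, after applying $\Fa$, to a common leaf-interval for $S_1$ and $T_1$ (with indices shifted by the grafting). This is a direct consequence of the description in Proposition \ref{falling} of how the in-order labels and the increasing/decreasing relations shift under $Y\circ_2(-)$ and $Y\circ_1(-)$, so it is routine once the indexing is fixed. Combining the two directions closes the induction and establishes the Theorem.
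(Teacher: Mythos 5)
Your proposal has genuine gaps in both directions, and the converse direction contains a step that is simply false. Start with the forward direction: Lemma \ref{new_shape} only produces \emph{trees} $S_1,T_1$ with $S=Y\circ_1 S_1$ and $T=Y\circ_2 T_1$; it does not tell you that $S_1\leqslant T_1$, and Proposition \ref{falling} can only be applied once you know $[S_1,T_1]$ is an \emph{interval} of $\Tam_{n-1}$. This is not automatic: the paper notes right after Lemma \ref{new_shape} that there are intervals of this shape which are not new. Concretely, in $\Tam_3$ take $S=Y\circ_1(Y\circ_2 Y)$ and $T=Y\circ_2(Y\circ_1 Y)$; then $S\leqslant T$, both have the nice shape, but $S_1=Y\circ_2 Y\not\leqslant Y\circ_1 Y=T_1$. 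Deducing $S_1\leqslant T_1$ from newness of $[S,T]$ is essentially the content of the paper's Lemma \ref{imp1} (equivalently, Lemmas \ref{croissante} and \ref{decroissante}), so the phrase ``immediately yields'' hides the actual work.

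The converse direction fails outright. Lemma \ref{r/f} says the fall of a new interval-poset is \emph{modern}, not new, so the contradiction you aim for (``$\Fa(P)$ would fail to be new; but $\Fa$ of a new interval-poset is new'') never arises. Nor can this be patched: there exist new intervals whose peeled interval $[S_1,T_1]$ is genuinely not new. For instance, if $C$ is the left comb of $\Tam_3$, the interval $[Y\circ_1 C,\,Y\circ_2 C]$ of $\Tam_4$ is new, while $[S_1,T_1]=[C,C]$ is certainly not new (a one-element interval of size $\geqslant 2$ shares all its subtrees with itself). The root cause is the ``bookkeeping point'' you dismissed as routine, which is in fact false: if $A\subseteq S_1$ and $B\subseteq T_1$ have the common leaf interval $[i,j]$ inside $S$ and $T$, then in the labelling of $S_1$ the leaves of $A$ are still $[i,j]$, but in the labelling of $T_1$ the leaves of $B$ are $[i-1,j-1]$, because $Y\circ_2(-)$ shifts labels by one while $Y\circ_1(-)$ does not. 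So the witness does \emph{not} transfer to a common leaf interval for $(S_1,T_1)$; it transfers to the shifted pattern, which is exactly the forbidden configuration $i+1\lhd j+1$ and $j\lhd i$ of Definition \ref{new_ip}. This asymmetric shift is the whole point of the theorem — the notion stable under peeling off the root is ``modern'', not ``new'' — and it is why the paper argues both directions directly (Lemmas \ref{red1}, \ref{red2}, \ref{imp1}, \ref{croissante}, \ref{decroissante}) rather than by induction on graftings.
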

We are going to prove that the intervals that are not new are exactly the intervals whose interval-poset is not new. As first easy case, we consider intervals that don't have the nice shape of Lemma \ref{new_shape}
\begin{lemma}\label{red1}
Let $n\in \mathbb{N}^{*}$. Let $[T,S]$ be an interval of $\Tam_n$ that is not of the form $[Y\circ_1 S_1, Y\circ_2 T_1]$ for $S_1$ and $T_1$ two trees of $\Tam_{n-1}$. The the corresponding interval-poset is not new.
\end{lemma}
\begin{proof}
If the root of the tree $S$ has a right son. Let $x$ be the most right vertex of $S$. This is the last right descendant of the root of $S$. This vertex is the last vertex visited by the in-order algorithm describe in Section \ref{section1}. So it is labeled by $n$. Let $r$ be the label of the root of $S$. Then, in $P$ we have a relation $n\lhd r$. So the poset is not new. Similarly, if the root of $T$ has a left son, there is an increasing relation in $P$ starting at $1$, so the poset is not new. 
\end{proof}
Similarly, we have
\begin{lemma}\label{red2}
Let $P$ be an interval-poset. If there is an increasing relation starting at $1$ or a decreasing relation starting at $n$, then the corresponding interval is not new. 
\end{lemma}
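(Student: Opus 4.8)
The plan is to reduce the statement to Lemma~\ref{new_shape} through its contrapositive: since every new interval of $\Tam_n$ has the shape $[Y\circ_1 S_1, Y\circ_2 T_1]$, it suffices to show that each of the two hypotheses forces the corresponding interval $[S,T]$ out of this shape. I would therefore split into two cases, an increasing relation starting at $1$ and a decreasing relation starting at $n$, and handle the second by the left/right symmetry of Lemma~\ref{symmetry}.

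For the first case I would quote the analysis already carried out in the first paragraph of the proof of Proposition~\ref{falling}: there it is shown that $P$ has no increasing relation starting at $1$ if and only if $T = Y\circ_2 T_1$ for some tree $T_1$. Hence an increasing relation $1\lhd x$ with $x>1$ forces the root of $T$ to have a left son, so $T$ is not of the form $Y\circ_2 T_1$, and a fortiori $[S,T]$ is not of the form $[Y\circ_1 S_1, Y\circ_2 T_1]$. By the contrapositive of Lemma~\ref{new_shape} the interval is not new.

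For the second case I would pass to the mirror interval. If $Q$ is the interval-poset of the left/right symmetric interval, then by Lemma~\ref{symmetry} a decreasing relation $n\lhd a$ of $P$ becomes the increasing relation $1\lhd n+1-a$ of $Q$, with $n+1-a>1$; the first case applied to $Q$ shows that the mirror interval is not new, and since the left/right symmetry interchanges graftings with graftings it preserves newness, so $[S,T]$ is not new either. Equivalently, one can argue directly as in Lemma~\ref{red1}: if $n\lhd a$ with $a<n$, then $n$ has a proper ancestor in $S$, so the root of $S$ is some $r<n$ and $n$ lies in its right subtree; thus the root of $S$ has a right son and $S\neq Y\circ_1 S_1$.

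I expect no genuine obstacle, as the argument is a dictionary translation between the relations of $P$ and the shapes of $S$ and $T$. The only points requiring care are bookkeeping: keeping track of the fact that increasing relations are recorded by $T$ while decreasing relations are recorded by $S$ (so that an increasing relation at $1$ constrains $T$ while a decreasing relation at $n$ constrains $S$), and checking that the left/right symmetry indeed exchanges ``increasing relation starting at $1$'' with ``decreasing relation starting at $n$'' while preserving the property of being new. The parenthetical direct computation in the second case is mildly preferable, since it avoids having to invoke the (easy but unstated) fact that newness is preserved under the mirror.
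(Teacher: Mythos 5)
Your proof is correct and takes essentially the same route as the paper: the paper likewise translates an increasing relation starting at $1$ (resp.\ a decreasing relation starting at $n$) into the root of $T$ having a left son (resp.\ the root of $S$ having a right son), using the increasing/decreasing forests of Theorem~\ref{bij}, and concludes by the contrapositive of Lemma~\ref{new_shape}. Your parenthetical direct argument for the decreasing case is exactly the paper's treatment, so the symmetry detour (with its unstated but easy fact that the mirror preserves newness) can be dropped.
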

\begin{proof}
If there is a decreasing relation starting by $n$ in $P$, it means that in the decreasing forest of $P$, the integer $n$ is not the root of its tree. Using the bijection of Theorem \ref{bij}, we see that this implies that there is a vertex on the right side of the tree $S$. Similarly, if there is an increasing relation starting by $1$ in $P$, there is a vertex on the left side of the tree $T$. By Lemma \ref{new_shape}, this implies that the interval $[S,T]$ is not new. 

\end{proof}

With the in-order algorithm, there is a simple link between the labelling of the vertices and the labelling of the leaves.

\begin{lemma}\label{label_subtree}
Let $S$ be a binary search tree. Let $T$ be a subtree of $S$. Then, the vertices of $T$ are labelled by the interval $[i,j-1]$ if and only if the leaves of $T$ are labelled by $[i,j]$.
\end{lemma}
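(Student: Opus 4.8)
The plan is to prove both directions of the stated equivalence by unwinding the in-order traversal algorithm and keeping careful track of how vertex labels and leaf labels interleave. The key elementary fact I would establish first is the local relationship at a single vertex: when the in-order algorithm visits a vertex $v$, every leaf encountered before visiting $v$ lies strictly to the left of $v$, and every leaf encountered after lies to its right. Concretely, if the vertices of the whole tree $S$ are labelled $1,\dots,n$ (so there are $n+1$ leaves labelled $1,\dots,n+1$), then immediately to the left of the vertex labelled $k$ sits a separation between the leaves: the leaves visited strictly before reaching vertex $k$ are exactly those labelled $1,\dots,k$, and the leaves visited after are $k+1,\dots,n+1$. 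This is because the in-order traversal alternates leaf–vertex–leaf–vertex$\cdots$ in its left-to-right sweep, so the vertex labelled $k$ is flanked by leaf $k$ on the left and leaf $k+1$ on the right.

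With this flanking fact in hand, I would argue as follows. Let $T$ be a subtree of $S$ with root $r$. The in-order traversal of $S$, when it enters the subtree $T$, processes all of $T$'s vertices and all of $T$'s leaves consecutively (this is the defining recursive structure of in-order traversal: a subtree is visited as an uninterrupted block). So the vertices of $T$ form a contiguous block of labels, say $[i,j-1]$, and simultaneously the leaves of $T$ form a contiguous block of leaf-labels. The flanking fact then pins down the leaf block precisely: the leftmost leaf of $T$ is the leaf immediately preceding the first vertex of $T$ (namely the vertex labelled $i$), which by the flanking fact is leaf $i$; and the rightmost leaf of $T$ is the leaf immediately following the last vertex of $T$ (the vertex labelled $j-1$), which is leaf $j$. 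Hence the leaves of $T$ are labelled exactly $[i,j]$, giving one leaf more than the number of vertices — consistent with a binary tree on $j-i$ vertices having $j-i+1$ leaves.

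For the converse direction, I would simply run the same argument backwards: if the leaves of $T$ are the contiguous block $[i,j]$, then since in-order traversal visits $T$ as an uninterrupted block and the vertices of $T$ are interleaved with exactly these leaves (one fewer vertex than leaves), the vertices must occupy the labels strictly between the first and last leaf-positions, which forces them to be $[i,j-1]$. The two implications are genuinely just the two readings of the same interleaving, so once the flanking lemma is set up, both follow together.

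The main obstacle, and the step deserving the most care, is making rigorous the claim that a subtree is traversed as a single contiguous block and that leaves and vertices strictly alternate in the global left-to-right order. This is intuitively obvious from the picture but benefits from an explicit induction on the recursive structure of the in-order algorithm: one shows that the sequence produced by \texttt{in-order}(T) is leaf, vertex, leaf, vertex, $\dots$, vertex, leaf, beginning and ending with a leaf, with exactly one more leaf than vertex. Once this alternation-and-contiguity statement is proved by induction on tree size, the labelling conclusion is immediate, so I would invest the bulk of the write-up in that structural induction rather than in the final counting, which is then purely arithmetic.
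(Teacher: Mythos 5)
Your proposal is correct: the structural induction you outline (the in-order sequence of a planar binary tree alternates leaf--vertex--leaf--\dots, a subtree is traversed as a contiguous block, and hence the vertex labelled $k$ is flanked by the leaves labelled $k$ and $k+1$) correctly yields both directions of the equivalence. The paper's own proof is just the single sentence ``the result follows from an easy induction,'' so your argument is a fleshed-out version of essentially the same approach rather than a different route.
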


\begin{proof}
The result follows from an easy induction. 

%
\end{proof}

We can deduce the following Lemma,
\begin{lemma}\label{imp1}
Let $[S,T]$ be an interval of $\Tam_n$ such that $S = Y\circ_1 S_1$ and $T = Y\circ_2 T_1$ for $S_1$ and $T_1$ two trees of $\Tam_{n-1}$. If $[S,T]$ is not new, the corresponding interval-poset is not new. 
\end{lemma}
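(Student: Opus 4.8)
The plan is to start from the failure of newness for the interval $[S,T]$ and to manufacture, directly inside $P$, the forbidden configuration of Definition \ref{new_ip}. Since $[S,T]$ is not new, Lemma \ref{not_new} provides a subtree $A$ of $S$ and a subtree $B$ of $T$ whose leaves form one and the same interval $[p,q]\neq[1,n+1]$; by Lemma \ref{label_subtree} their \emph{vertices} then form the same interval $[a,b]:=[p,q-1]$, with $[a,b]\neq[1,n]$. Because $S=Y\circ_1 S_1$ has root labelled $n$ and $T=Y\circ_2 T_1$ has root labelled $1$, I would first record that $a\geqslant 2$ and $b\leqslant n-1$: if $a=1$ then $B$ would be a full subtree of $T$ containing the root $1$, forcing $B$ to be all of $T$ and hence $[a,b]=[1,n]$, a contradiction; symmetrically $b=n$ is impossible in $S$. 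So both $a-1$ and $b+1$ are genuine vertices.

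The heart of the argument is the pair of assertions $b\lhd_S (a-1)$ and $a\lhd_T (b+1)$. For the first: since $a$ is the smallest label of the full subtree $A$, the vertex $a$ has no left child in $S$. Consecutive labels are always in an ancestor--descendant relation in a binary search tree, so applying this to $a-1<a$ forces $a-1$ to be an ancestor of $a$, with $a$ the leftmost vertex of the right subtree of $a-1$. That right subtree is a full subtree of smallest label $a$; it and $A$ are two full subtrees containing the common vertex $a$, hence are nested (the full subtrees through a fixed vertex are exactly those rooted at its ancestors, and the ancestors form a chain). The right subtree of $a-1$ cannot be strictly contained in $A$, for then its root (the right child of $a-1$) would be a non-root vertex of $A$ and so would have its parent inside $[a,b]$, whereas that parent is $a-1\notin[a,b]$. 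Therefore $A$ is contained in the right subtree of $a-1$, so $b$ (indeed all of $[a,b]$) is a descendant of $a-1$, i.e. $b\lhd_S(a-1)$. The assertion $a\lhd_T(b+1)$ is the mirror statement: $b$ is the largest label of the full subtree $B$, hence has no right child in $T$; the consecutive pair $b<b+1$ makes $b+1$ an ancestor of $b$ whose left subtree is a full subtree of largest label $b$, and the same nesting argument places all of $[a,b]$ inside it, giving $a\lhd_T(b+1)$.

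It then remains only to read off the pattern. The relation $b\lhd_S(a-1)$ is decreasing (as $a-1<b$), so it lies in $P$ among the decreasing relations of $S$; the relation $a\lhd_T(b+1)$ is increasing (as $a<b+1$), so it lies in $P$ among the increasing relations of $T$. Setting $i=a-1$ and $j=b$, we get $i<j$ together with $j\lhd_P i$ and $i+1\lhd_P j+1$, which is precisely the configuration forbidden for a new interval-poset. Hence $P$ is not new.

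I expect the main obstacle to be the middle paragraph. The natural temptation is to split into cases according to whether the common subtree sits as a left child or as a right child of its neighbour in each of $S$ and $T$ (four cases, to be reconciled with $S\leqslant T$ through Proposition \ref{char_tamari}). The point worth getting right is that this case analysis is unnecessary: using only that $a$ and $b$ are the extreme labels of a \emph{full} common subtree, together with the nestedness of full subtrees through a fixed vertex, one obtains $b\lhd_S(a-1)$ and $a\lhd_T(b+1)$ uniformly in each tree, so that neither the relative position of the subtree nor even the order relation $S\leqslant T$ enters the final extraction of the pattern.
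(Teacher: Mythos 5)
Your proof is correct and follows essentially the same route as the paper's: both invoke Lemma \ref{not_new} and Lemma \ref{label_subtree} to get common subtrees with vertex labels $[a,b]=[i,j-1]$, and both then extract exactly the relations $b\lhd_S (a-1)$ and $a\lhd_T(b+1)$ (the paper's $j-1\lhd i-1$ and $i\lhd j$) to exhibit the forbidden pattern of Definition \ref{new_ip}. The only difference is bookkeeping: the paper obtains these relations by passing through the roots $x,y$ of the common subtrees and using transitivity (e.g.\ $i\lhd x$ and $x\lhd j$), whereas you argue directly via nestedness of full subtrees containing the vertex $a$ (resp.\ $b$); both justifications rest on the same standard facts about in-order successors in binary search trees.
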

\begin{proof}
By Lemma \ref{not_new}, there are integers $1<i<j<n+1$, a subtree $A$ of $S$ whose leaves are labeled by $[i,j]$ and a subtree $B$ of $T$ whose leaves are also labeled by $[i,j]$. This implies that the root of $A$ and $B$ are not on the left or right sides of $S$ and $T$. 

By Lemma \ref{label_subtree}, the vertices of the two subtrees are labelled by $[i,j-1]$. Let $x$ be the label of the root $B$. The most left vertex of $B$ is labelled by $i$. So, in the poset of increasing relations of $T$ we have $i\lhd x$. The vertex labelled by $j$ (there is such a vertex since $j<n+1$) is the vertex visited by the in-order traversal after $j-1$. Since $j-1$ is the most-right vertex of the tree $B$, the vertex $x$ is in the subtree with root $j$. So we have $x\lhd j$ and by transitivity, we have $i\lhd j$.

Similarly if $y$ is the label of the root of $A$, then we have a decreasing relation $j-1\lhd y$. The vertex labelled by $i-1$ (there is such a vertex since $1<i$) is the vertex visited by the in-order algorithm before the vertex labelled by $i$ which is nothing but the left most vertex of the tree $S_1$. In particular, $y$ is in the subtree with root $i-1$. So we have $y\lhd i-1$ and by transitivity $j-1\lhd i-1$. 

In conclusion, the interval-poset corresponding to $[S,T]$ is not new. 
\end{proof}

Conversely, we need to understand how the forbidden configuration of Definition \ref{new_ip} leads to the existence of a grafting decomposition of the corresponding interval. For this we need to carefully follow the bijection of Châtel and Pons. 

Let $P$ be an interval-poset with no increasing relation starting at $1$ and no decreasing relation starting at $n$. If $P$ is not new, then it has a configuration of the form $i+1 \lhd_R j+1$ and $j\lhd_R i$ for $i < j$. Let $x$ be the maximal element in $[i+1,j]$ such that $i+1 \lhd x$. Note that the interval-poset condition implies that there is a decreasing relation $x\lhd i$. Similarly, let $y$ be the minimal element such that $i< y \leqslant j$ and such that $j \lhd y$. 

\begin{lemma}\label{croissante}
Let $T$ be the upper bound of the interval of $\Tam_n$ corresponding to $P$ by the bijection of Theorem \ref{bij}. Then, the subtree of $T$ with root the vertex labelled by $x$ has leaves labelled by the interval $[i+1,j+1]$. 
\end{lemma}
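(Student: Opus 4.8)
```latex
The plan is to understand explicitly what the tree $T$ looks like around the vertex labelled by $x$, using the construction of the increasing tree in the bijection of Theorem \ref{bij}. Recall that $x$ is the maximal element of $[i+1,j]$ with $i+1 \lhd x$, an increasing relation. By the definition of the increasing tree (son becomes left son, right brother becomes right son), the vertices $\lhd$-below a given vertex in the increasing forest correspond exactly to the vertices appearing in its subtree in $T$. So the main task is to show that the set $\{u : u \lhd x\}$, together with $x$ itself, is exactly the interval $[i+1,j]$, and then to invoke Lemma \ref{label_subtree} to convert this into the statement about leaves, namely that the leaves of the subtree are labelled by $[i+1,j+1]$.

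First I would establish the inclusion $\{u : u \lhd x\} \subseteq [i+1,j]$ together with the reverse. Since $P$ is an interval-poset and $i+1 \lhd x$ is increasing with $x \le j$, the interval-poset condition forces every integer in $[i+1,x]$ to satisfy $u \lhd x$; so $[i+1,x]$ lies below $x$. For the elements between $x$ and $j$, I would use the maximality of $x$: by hypothesis $x$ is the \emph{largest} element of $[i+1,j]$ with $i+1 \lhd x$, so I must argue that no vertex strictly above $x$ (up to $j$) can lie $\lhd$-below $x$, and conversely that every such vertex does still belong to the subtree rooted at $x$ in $T$. The natural tool is the relation $j \lhd i$ (decreasing) which, via the interval-poset condition, produces the small decreasing relations $u \lhd i$ for $u \in [i+1, j]$, and in particular the crucial relation $x \lhd i$. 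The interaction of the increasing relation $i+1 \lhd x$ with the decreasing relations $u \lhd i$ is what pins down the shape of the subtree.

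Next I would translate the poset-theoretic conclusion into a statement about the binary search tree $T$. The key is that in $T$ the vertices of the subtree rooted at $x$ are precisely $\{u : u \lhd_T x\}$, which for the increasing tree coincides with the increasing-relation descendants of $x$ in $P$. Having identified this set as the interval $[i+1,j]$ of vertices, Lemma \ref{label_subtree} immediately gives that the leaves of that subtree form the interval $[i+1,j+1]$, which is the desired conclusion. I would also need to confirm that $x$ really is the root of that subtree (i.e.\ that $x$ is not itself $\lhd$-below some other vertex of $[i+1,j]$ in the increasing forest), which again follows from maximality of $x$ and the interval-poset condition.

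The main obstacle I anticipate is the boundary bookkeeping: showing that the subtree rooted at $x$ contains \emph{exactly} the vertices $[i+1,j]$ and not one fewer or one more. The upper end $j$ is delicate because $x$ was chosen maximal in $[i+1,j]$ with $i+1 \lhd x$, yet the subtree may extend past $x$ up to $j$; I must argue that the absence of an increasing relation $i+1 \lhd u$ for $x < u \le j$ does not exclude $u$ from the subtree of $x$, and that the definition of $y$ (the minimal element with $i < y \le j$, $j \lhd y$) together with the forbidden-configuration hypothesis controls precisely this range. Carefully combining the increasing relation $i+1 \lhd x$, the decreasing relations coming from $j \lhd i$, and the maximality/minimality of $x$ and $y$ is where the real work lies; the conversion via Lemma \ref{label_subtree} at the end is then routine.
```
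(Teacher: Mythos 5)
There is a genuine gap, and it sits at the very first step of your plan. It is \emph{not} true that the subtree of $T$ rooted at $x$ consists of the vertices $\lhd$-below $x$ in the increasing forest: under the rule ``son becomes left son, right brother becomes right son,'' the $\lhd$-descendants of $x$ form only the \emph{left} subtree of $x$ in $T$, while the \emph{right} subtree of $x$ is built from the later (larger) siblings of $x$ in the forest and their descendants, which in general are not $\lhd$-below $x$. Consequently the ``main task'' you set yourself, proving $\{u : u \lhd x\}\cup\{x\} = [i+1,j]$, is false whenever $x < j$: by maximality of $x$ no element of $(x,j]$ can satisfy $u \lhd x$ (else $i+1 \lhd x \lhd u$ would contradict maximality), so this set equals $[i+1,x]$, a proper subset of $[i+1,j]$. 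Concretely, take the interval-poset on $\{1,2,3,4\}$ with increasing relations $2\lhd 4$, $3\lhd 4$ and decreasing relations $3\lhd 1$, $2\lhd 1$; here $i=1$, $j=3$, $x=2$, the down-set of $x$ is just $\{2\}$, yet the subtree of $T$ rooted at $2$ has vertex set $\{2,3\}$, because $3$, being the next sibling of $2$ in the increasing forest, becomes the right son of $2$ in $T$. Your last paragraph does sense this tension (``the subtree may extend past $x$ up to $j$''), but that admission contradicts your opening identification, and you offer no mechanism for why $(x,j]$ lands inside the subtree --- which is exactly where the content of the lemma lies. (Also, $y$ is a red herring here: it plays no role in Lemma \ref{croissante}, only in its dual, Lemma \ref{decroissante}.)

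The missing idea, which is the heart of the paper's argument, is to exploit the sibling structure at $j+1$ rather than the down-set of $x$. Maximality of $x$ shows that $x \lhd j+1$ is a \emph{cover} relation in the increasing forest (an intermediate $z$ with $x \lhd z \lhd j+1$ would lie in $(x,j]$ and satisfy $i+1\lhd z$), so $x$ is a forest-child of $j+1$; moreover $j \lhd j+1$ holds by the interval-poset condition applied to $i+1 \lhd j+1$, and $j$ is necessarily the largest child of $j+1$. Under the forest-to-tree rule, the larger siblings of $x$ become a chain of successive right sons starting at $x$, so $j$ is the rightmost vertex of the subtree of $T$ rooted at $x$, and every vertex of that subtree is $\lhd j+1$, hence at most $j$. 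Your treatment of the lower boundary is fine and matches the paper: the decreasing relation $x\lhd i$ (coming from $j\lhd i$) rules out any $h\le i$ with $h \lhd x$, so $i$ is not in the left subtree of $x$; combining the two boundaries with Lemma \ref{label_subtree} (the vertex set of a subtree of a binary search tree is an interval of labels) yields vertex set exactly $[i+1,j]$ and leaves $[i+1,j+1]$. Without the cover-relation/rightmost-child step, however, the upper end cannot be reached, so the proposal as written cannot be completed.
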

\begin{proof}
Let $h \leqslant i$. If there is a relation $h \lhd x$, by the interval-poset condition we have a relation $i \lhd x$. This contradicts the decreasing relation $x\lhd i$. 

Moreover, the maximality of $x$ implies that the relation $x\lhd j+1$ is a cover relation in the increasing forest of $P$. Together with the previous argument, this shows that $x$ is the left most child of $j+1$ in the increasing forest of $P$. 

The relation $i+1 \lhd j+1$ and the interval-poset condition implies the existence of the relation $j \lhd j+1$. Clearly, $j$ is the right most child of $j+1$ in the increasing forest of $P$. 

So, in the tree $T$, the vertex $j$ is the right most descendant of $x$ and $x$ is the left son of $j+1$. In other words, $j$ is the largest vertex of the subtree with root $x$. Since we have $i+1 \lhd x$, there is a vertex labelled by $i+1$ in the subtree with root $x$. The first argument of the proof implies that this is the smallest vertex of this subtree. So it has its vertices labelled by the interval $[i+1,j]$. Finally, by Lemma \ref{label_subtree} its leaves are labelled by $[i+1,j+1]$.

\end{proof}
Dually, we have a similar result for the decreasing relations.
\begin{lemma}\label{decroissante}
Let $S$ be the lower bound of the interval of $\Tam_n$ corresponding to $P$ by the bijection of Theorem \ref{bij}. 

Then, the subtree of $S$ with root the vertex labelled by $y$ has leaves labelled by the interval $[i+1,j+1]$. 
\end{lemma}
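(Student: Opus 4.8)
The plan is to prove Lemma~\ref{decroissante} as the exact dual of Lemma~\ref{croissante}, exploiting the left/right symmetry of interval-posets recorded in Lemma~\ref{symmetry}. Recall that under this symmetry an interval-poset $P$ on $[1,n]$ is sent to an interval-poset $Q$ with $a\lhd_Q b \Leftrightarrow n+1-a\lhd_P n+1-b$, that decreasing relations of $P$ become increasing relations of $Q$ and vice versa, and that at the level of intervals it swaps the lower bound $S$ with the upper bound $T$ (after taking the left/right mirror of each tree). So the strategy is: transport the configuration $i+1\lhd j+1$, $j\lhd i$ through this symmetry, apply Lemma~\ref{croissante} to the mirrored poset $Q$, and then pull the conclusion back to $S$.

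Concretely, I would first set $i'=n-j$ and $j'=n-i$, so that the images of $i,i+1,j,j+1$ under $a\mapsto n+1-a$ are $j'+1,j',i'+1,i'$ respectively. A short check shows that the decreasing relation $j\lhd_P i$ becomes an increasing relation $i'+1\lhd_Q j'+1$ in $Q$, and the increasing relation $i+1\lhd_P j+1$ becomes a decreasing relation $j'\lhd_Q i'$ in $Q$; thus $Q$ carries exactly the configuration of Definition~\ref{new_ip} with the roles of $i',j'$ playing the part of $i,j$. The element $x$ chosen in $P$ (the maximal element of $[i+1,j]$ with $i+1\lhd x$) corresponds under the symmetry to the minimal element of $[i'+1,j']$ that sits below $j'+1$, which is precisely the element $y'$ associated to the increasing side of $Q$; symmetrically, $y$ in $P$ corresponds to $x'$ in $Q$. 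I would verify that these correspondences of distinguished elements match the definitions of $x$ and $y$ attached to $Q$ rather than re-deriving them.

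Next I would apply Lemma~\ref{croissante} to the interval-poset $Q$: its upper bound tree $T_Q$ has, at the vertex labelled by the element $x_Q$, a subtree whose leaves are labelled by $[i'+1,j'+1]$. Since the symmetry of Lemma~\ref{symmetry} sends the upper bound of $P$'s interval to (the mirror of) the lower bound, $T_Q$ is the left/right mirror of $S$, and the relabelling $a\mapsto n+1-a$ turns the leaf-interval $[i'+1,j'+1]=[n-j+1,n-i+1]$ back into $[i+1,j+1]$ on the leaves of $S$. The vertex $x_Q$ of $T_Q$ corresponds to the vertex $y$ of $S$ under the mirror, so the subtree of $S$ rooted at $y$ has its leaves labelled by $[i+1,j+1]$, which is exactly the claim.

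The main obstacle I expect is purely bookkeeping: the symmetry acts on vertex labels by $a\mapsto n+1-a$ but on \emph{leaf} labels the shift is slightly different (leaves are labelled $1,\dots,n+1$), and the mirror reverses left/right, so one must track carefully how a subtree's leaf-interval $[i,j]$ transforms. Getting the index arithmetic consistent — in particular confirming that $[i'+1,j'+1]$ really pulls back to $[i+1,j+1]$ and not to an off-by-one neighbour, and that the distinguished element $x$ of $P$ corresponds to $y$ (not to $x$) of $Q$ — is the only delicate point; once the dictionary between $(i,j,x,y)$ for $P$ and $(i',j',x_Q,y_Q)$ for $Q$ is fixed, the statement follows immediately from Lemma~\ref{croissante} with no further structural argument. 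Alternatively, if one prefers to avoid the symmetry, the entire proof of Lemma~\ref{croissante} can simply be rewritten with increasing and decreasing relations interchanged, using that $y$ is the minimal element of $(i,j]$ with $j\lhd y$ and hence the right-most child of $i$ in the decreasing forest, together with Lemma~\ref{label_subtree}; but the symmetry route is shorter and less error-prone.
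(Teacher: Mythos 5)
Your proposal is correct and follows exactly the paper's route: the paper's own proof is the one-line remark that the lemma is ``a straightforward application of Lemma~\ref{symmetry} to Lemma~\ref{croissante},'' and you have simply carried out the bookkeeping (the dictionary $i'=n-j$, $j'=n-i$, the identification of $y$ with the distinguished element $x_Q$ of the mirrored poset, and the pull-back of the leaf interval $[i'+1,j'+1]$ to $[i+1,j+1]$) that the paper leaves implicit, and your index arithmetic checks out.
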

\begin{proof}
This is a straightforward application of Lemma \ref{symmetry} to Lemma \ref{croissante}. 
\end{proof}
\begin{proof}[Proof of Theorem \ref{not_new_theo}] 
By Lemmas \ref{red1} and \ref{imp1} if an interval is not new, then its corresponding interval poset is not new. Conversely, using Lemma \ref{red2}, we can assume that $P$ does not have an increasing relation starting at $1$ nor a decreasing relation starting at $n$. Let $[S,T]$ be the corresponding interval. Then, by Lemmas \ref{croissante} and \ref{decroissante} and the discussion before them, in $S$ and $T$ there are two subtrees whose leaves are labelled by the same interval. Lemma \ref{not_new} implies that $[S,T]$ is not new. 
\end{proof}
As corollary, we also have a characterization in terms of modern-interval posets. 
\begin{corollary}\label{coro_modern}
Let $n$ be an integer. There is a bijection between the set of new-intervals of $\Tam_{n+1}$ and the set of modern interval-posets of size $n$.
\end{corollary}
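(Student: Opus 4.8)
The plan is to assemble the bijection from pieces already established in the excerpt, chaining them together. The target statement, Corollary \ref{coro_modern}, asserts a bijection between new intervals of $\Tam_{n+1}$ and modern interval-posets of size $n$, so the natural strategy is to factor this correspondence through the notion of \emph{new interval-poset} and the rising/falling operations.

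First I would invoke Theorem \ref{bij} (Châtel--Pons), which gives a bijection between intervals of $\Tam_{n+1}$ and interval-posets of size $n+1$. Restricting this bijection to the subclass of new intervals, Theorem \ref{not_new_theo} tells us precisely that an interval of $\Tam_{n+1}$ is new if and only if its corresponding interval-poset is new. Hence the bijection of Theorem \ref{bij} restricts to a bijection between the set of new intervals of $\Tam_{n+1}$ and the set of new interval-posets of size $n+1$. This is the first link in the chain.

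Next I would apply Lemma \ref{r/f}, which states that the rising and falling operations induce a bijection between modern interval-posets of size $n$ and new interval-posets of size $n+1$. Composing the restricted Châtel--Pons bijection with the inverse of this rising/falling bijection yields exactly the claimed correspondence: a new interval of $\Tam_{n+1}$ maps to its new interval-poset of size $n+1$, whose fall is a modern interval-poset of size $n$, and every modern interval-poset of size $n$ arises this way via its rise. Since each step is a bijection, the composite is a bijection, which completes the proof.

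Because all the substantive work has already been carried out in Theorem \ref{not_new_theo} and Lemma \ref{r/f}, there is no real obstacle remaining here; the only care needed is bookkeeping of the size shift by $1$ (new intervals live in $\Tam_{n+1}$ while modern interval-posets have size $n$), which is built into the statement of Lemma \ref{r/f}. One might optionally remark that Proposition \ref{falling} already makes the composite explicit in terms of trees, identifying the new interval $[Y\circ_1 S_1, Y\circ_2 T_1]$ with the modern interval-poset corresponding to $[S_1,T_1]$, but this is not logically necessary for the corollary.
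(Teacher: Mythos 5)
Your proof is correct and follows exactly the paper's own route: the paper likewise combines Theorem \ref{not_new_theo} (new interval $\Leftrightarrow$ new interval-poset, via the Ch\^atel--Pons bijection of Theorem \ref{bij}) with Lemma \ref{r/f} (rise/fall bijection between new interval-posets of size $n+1$ and modern interval-posets of size $n$). Your write-up merely makes the restriction of the bijection and the size bookkeeping more explicit than the paper's two-sentence proof.
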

\begin{proof}
By Theorem \ref{not_new_theo}, an interval of $\Tam_{n+1}$ is new if and only if its corresponding interval-poset is new. By Lemma \ref{r/f}, these interval-posets are in bijection with the modern interval-posets of size $n$. 
\end{proof}
As explain in Lemma \ref{new_shape}, it is easy to see that if an interval $[S,T]$ is new, then $S = Y\circ_{1} S_1$ and $T=Y \circ_{2} T_1$ where $Y$ is the unique binary tree of size $1$. However, this is not a sufficient condition. Using our characterization of new intervals in terms of interval-posets, we can find a characterization of the new intervals of the Tamari lattice.
\begin{theorem}\label{new_chara}
Let $[S,T]$ be an interval of $\Tam_{n+1}$. Then $[S,T]$ is a new interval if and only if there is an interval $[S_1,T_1]$ in $\Tam_n$ such that $S = Y\circ_{1} S_1$ and $T = T\circ_2 T_1$. 
\end{theorem}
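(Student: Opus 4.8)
The plan is to observe that this theorem is an immediate consequence of the two main results already established in this section, namely Theorem \ref{not_new_theo} and Proposition \ref{falling}, so essentially no new combinatorial work is required. (I read the right-hand condition as $S = Y\circ_1 S_1$ and $T = Y\circ_2 T_1$, matching the statement of Proposition \ref{falling}; the ``$T = T\circ_2 T_1$'' in the displayed statement appears to be a typo for ``$T = Y\circ_2 T_1$''.)

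First I would let $P$ be the interval-poset corresponding to $[S,T]$ under the bijection of Theorem \ref{bij}. By Theorem \ref{not_new_theo}, the interval $[S,T]$ is new if and only if $P$ is a new interval-poset in the sense of Definition \ref{new_ip}. This is the one equivalence that encapsulates all the delicate tree-versus-poset bookkeeping.

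Next I would invoke Proposition \ref{falling}, which asserts precisely that $P$ is new if and only if there is an interval $[S_1,T_1]$ of $\Tam_n$ with $S = Y\circ_1 S_1$ and $T = Y\circ_2 T_1$. Chaining the two equivalences then yields the claimed characterization directly, with no intermediate reference to interval-posets surviving in the final statement.

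There is no real obstacle here: all of the combinatorial content has already been carried out upstream. Translating the ``no pair of subtrees with matching leaf interval'' criterion of Lemma \ref{not_new} into the forbidden-pattern condition on the interval-poset, and tracing the Knuth correspondence of Theorem \ref{bij} through the grafting decomposition, is exactly the work done in Lemmas \ref{label_subtree}, \ref{croissante} and \ref{decroissante}, in Proposition \ref{falling}, and in the proof of Theorem \ref{not_new_theo}. This final theorem merely records the convenient restatement of that work purely in terms of binary trees.
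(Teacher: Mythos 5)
Your proof is correct and is essentially identical to the paper's own proof: the paper likewise deduces Theorem \ref{new_chara} by chaining Theorem \ref{not_new_theo} with Proposition \ref{falling}, and you are right that ``$T = T\circ_2 T_1$'' is a typo for ``$T = Y\circ_2 T_1$''.
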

\begin{proof}
By Theorem \ref{not_new_theo} the new intervals of $\Tam_{n+1}$ are exactly the intervals such that the corresponding interval-poset is new. The result follows from Proposition \ref{falling}. 
\end{proof}

\section{Infinitely modern interval-posets}
For an integer $k$ and an interval-poset $P$ of size $n$, we let $\Ri^{k}(P)$ the $k$-th rise of $P$. That is the set obtained by successively taking $k$-times its rise.
\begin{definition}
An interval-poset is infinitely modern if $\Ri^{k}(P)$ is an interval-poset for every $k\geqslant 1$. 
\end{definition}
\begin{lemma}\label{inf}
An interval-poset $P$ is infinitely modern if and only if it does not contain any configuration of the form $w\lhd x$ and $z\lhd y$ for $w < x < y < z$. 
\end{lemma}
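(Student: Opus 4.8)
The plan is to reduce the statement to the modernity criterion of Lemma~\ref{montee} and then to follow how iterated rises act on the relations of $P$. Since $\Ri^{k+1}(P)=\Ri(\Ri^{k}(P))$ and, by Lemma~\ref{montee}, the rise of an interval-poset $Q$ is again an interval-poset exactly when $Q$ is modern, a straightforward induction gives the reformulation: all the rises $\Ri^{k}(P)$ with $k\geqslant 1$ are interval-posets if and only if $\Ri^{k}(P)$ is a modern interval-poset for every $k\geqslant 0$. In particular, taking $k=1$, an infinitely modern interval-poset is itself modern. Thus it suffices to characterise, in terms of $P$, the property that every $\Ri^{k}(P)$ is modern.

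Next I would use the explicit description of the rise. The rise fixes every decreasing relation and shifts every increasing relation by $1$; hence, after $k$ rises, an increasing relation $a\lhd b$ of $P$ becomes the increasing relation $(a+k)\lhd(b+k)$ of $\Ri^{k}(P)$, while the decreasing relations of $\Ri^{k}(P)$ are precisely those of $P$. By Definition~\ref{def_modern}, a poset fails to be modern exactly when an increasing relation and a decreasing relation have a common root $y$, the increasing one coming from an element below $y$ and the decreasing one from an element above $y$. Reading this off $\Ri^{k}(P)$ through the shift, $\Ri^{k}(P)$ is not modern if and only if $P$ possesses an increasing relation $a\lhd b$ and a decreasing relation $z\lhd y$ whose roots agree after shifting, i.e.\ with $b+k=y$ (the side conditions $a<b$ and $y<z$ coming from the relations being increasing and decreasing, and $a+k<y$ being automatic from $a<b$).

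It then remains to quantify over $k$. An integer $k\geqslant 0$ solving $b+k=y$ exists precisely when $y\geqslant b$. Consequently some iterated rise of $P$ fails to be modern if and only if $P$ contains an increasing relation $w\lhd x$ and a decreasing relation $z\lhd y$ with $w<x\leqslant y<z$; equivalently, $P$ is infinitely modern if and only if it avoids this configuration. Here the boundary case $x=y$ (the value $k=0$) is exactly the failure of modernity of $P$, while the cases $x<y$ (the values $k\geqslant 1$) are the configurations $w\lhd x$, $z\lhd y$ with $w<x<y<z$ appearing in the statement, the relevant rise being $k=y-x$, the first one at which the forbidden common-root pattern materialises.

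The only places where care is needed, and which I would write out in full, are the bookkeeping of the inductive reduction and the degenerate case. For the induction one must ensure that each intermediate rise really is an interval-poset before invoking Lemma~\ref{montee} at the next stage, so that the equivalence of Lemma~\ref{montee} can be applied step by step; this is precisely what forces the quantifier $k\geqslant 0$ and brings in the modernity case $x=y$ rather than only the strict configuration. I expect this interplay between the equality case and modernity to be the main subtlety: the shift computation itself and the endpoint ranges (the elements $1$ and $n+k$) are routine, since the rise neither creates nor destroys relations but only translates the increasing ones.
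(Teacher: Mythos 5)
Your argument follows essentially the same route as the paper's own proof: reduce ``infinitely modern'' to ``every $\Ri^{k}(P)$, $k\geqslant 0$, is a modern interval-poset'' via Lemma \ref{montee}, then track how $k$ rises shift each increasing relation by $k$ while fixing the decreasing ones, and detect when an increasing and a decreasing relation acquire a common target. That bookkeeping, including the inductive reduction, is correct.

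However, the point you set aside as ``the main subtlety'' is a genuine discrepancy, and it cannot be repaired: what your argument proves is that $P$ is infinitely modern if and only if it has no configuration $w\lhd x$, $z\lhd y$ with $w<x\leqslant y<z$, and this non-strict version is \emph{not} equivalent to the strict statement of the lemma. The lemma as printed is in fact false in the boundary case. Concretely, let $P$ be the interval-poset of size $3$ with relations $1\lhd 2$ and $3\lhd 2$. It contains no configuration with $w<x<y<z$ (its only increasing and only decreasing relation share the target $2$), but it is not modern, so $\Ri(P)$ contains both $2\lhd_R 3$ and $3\lhd_R 2$ and is not a poset; hence $P$ is not infinitely modern. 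The paper's own proof has exactly the same gap: in the converse direction it takes the minimal $k+1$ with $\Ri^{k+1}(P)$ not a poset and pulls the modern-violating configuration of $\Ri^{k}(P)$ back to $P$, but when $k=0$ (i.e.\ $P$ itself is not modern) this yields only $x\leqslant y$, never $x<y$. So your non-strict conclusion is the correct form of the lemma, and it is also all the paper needs downstream: the proof of Proposition \ref{stat} goes through verbatim with $w<x\leqslant y<z$, since the relations $x-1\lhd x$ and $y+1\lhd y$ still force $\ir(P)<\id(P)$.
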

\begin{proof}
If we have such a configuration in $P$, then the interval-poset condition implies the existence of relations $x-1\lhd x$ and $y+1\lhd y$. After rising our poset enough times, they will lead to $y\lhd_{R^k} y+1$ and $y+1\lhd_{R^k} y$.

Conversely, let $k+1$ the smallest integer such that $\Ri^{k+1}(P)$ is not a poset. Then, $\Ri^{k}(P)$ is not modern, so by Definition \ref{def_modern} there is a configuration of the form $x\lhd_{R^{k}} y$ and $z\lhd_{R^k} y$ for $x < y < z$ in $\Ri^{k}(P)$. This leads to the result. 
\end{proof}
For an interval-poset $P$ of size $n$ we denote by $\ir(P)$ the smallest in integer $k$ such that there is an \underline{i}ncreasing \underline{r}elation $k \lhd k+1$. If there is no increasing relation, we use the convention that $\ir(P)=n$. Similarly, we denote by $\id(P)$ the largest integer $i$ such that there is a \underline{d}ecreasing \underline{r}elation $i\lhd i-1$. If there is no decreasing relation, we use the convention that $\id(P)=1$. We can associate to any interval poset $P$ of size $n$ the double statistic $\big(\ir(P),\id(P)\big)$ which is a pair of elements of $\{1,\cdots,n\}$. Using this statistic, we have another description of the infinitely modern interval-posets.
\begin{proposition}\label{stat}
Let $P$ be an interval-poset of size $n$. Then $P$ is infinitely modern if and only if $\id(P) \leqslant \ir(P)$.
\end{proposition}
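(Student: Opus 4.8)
The plan is to reduce the statement to a single numerical comparison by tracking how the two statistics behave under the rise. First I would record the elementary effect of one rise. Since the rise keeps every decreasing relation and shifts every increasing relation one step to the right, a unit increasing relation $k\lhd k+1$ becomes $k+1\lhd k+2$ while the unit decreasing relations are untouched; hence for any interval-poset $Q$ one has $\ir(\Ri(Q))=\ir(Q)+1$ and $\id(\Ri(Q))=\id(Q)$, and the conventions for the empty cases are compatible with these formulas. Combining this with Lemma \ref{montee} applied inductively, I would first observe that $P$ is infinitely modern if and only if $\Ri^{k}(P)$ is a modern interval-poset for every $k\geqslant 0$: indeed $\Ri^{k+1}(P)$ is an interval-poset precisely when $\Ri^{k}(P)$ is modern, so starting from the interval-poset $P$ one rises forever exactly when modernity is preserved at every stage.

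The core of the argument is a local dictionary between modernity and the statistic $(\ir,\id)$, resting on the small-relation Remark and the interval-poset condition. If $Q$ is not modern it contains $x\lhd y$ and $z\lhd y$ with $x<y<z$; the interval-poset condition forces the unit relations $y-1\lhd y$ and $y+1\lhd y$, so $\ir(Q)\leqslant y-1$ and $\id(Q)\geqslant y+1$, whence $\id(Q)\geqslant \ir(Q)+2$. Conversely, if $\id(Q)=\ir(Q)+2$, then writing $a=\ir(Q)$ the unit relations $a\lhd a+1$ and $a+2\lhd a+1$ form exactly a forbidden modern configuration, so $Q$ is not modern. Finally $\id(Q)=\ir(Q)+1$ is impossible, since it would mean that both $a\lhd a+1$ and $a+1\lhd a$ hold, contradicting antisymmetry. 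In particular $\id(Q)\leqslant \ir(Q)$ forces $Q$ to be modern, while $\id(Q)>\ir(Q)$ is the same as $\id(Q)\geqslant \ir(Q)+2$.

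With this dictionary both implications are short. If $\id(P)\leqslant \ir(P)$, I would prove by induction on $k$ that every $\Ri^{k}(P)$ is an interval-poset with $\id(\Ri^{k}(P))=\id(P)\leqslant \ir(P)\leqslant \ir(P)+k=\ir(\Ri^{k}(P))$; the dictionary then shows each $\Ri^{k}(P)$ is modern, so the next rise is again an interval-poset and $P$ is infinitely modern. Conversely, suppose $\id(P)>\ir(P)$, so $\id(P)\geqslant \ir(P)+2$. If some $\Ri^{k}(P)$ with $k\geqslant 1$ already fails to be an interval-poset we are done; otherwise all rises are interval-posets and the statistic computation gives $\id(\Ri^{k}(P))-\ir(\Ri^{k}(P))=\id(P)-\ir(P)-k$, which equals $2$ when $k=\id(P)-\ir(P)-2\geqslant 0$. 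At that stage $\Ri^{k}(P)$ is not modern, so $\Ri^{k+1}(P)$ is not an interval-poset by Lemma \ref{montee}, and $P$ is not infinitely modern.

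The main obstacle is precisely the local dictionary, and within it the two adjacent values $\id(Q)=\ir(Q)+1$ and $\id(Q)=\ir(Q)+2$. These are exactly the situations that the spread-out configuration $w\lhd x$, $z\lhd y$ with $w<x<y<z$ of Lemma \ref{inf} does not detect directly: the value $+1$ must be excluded by antisymmetry, and the value $+2$ must be recognized as a failure of ordinary modernity — that is, a breakdown at the very first rise — rather than as one of the four-point obstructions of Lemma \ref{inf}. Once these adjacent cases are pinned down by the small-relation Remark, the monotone behaviour of $\id-\ir$ under the rise makes the equivalence with the single inequality $\id(P)\leqslant \ir(P)$ essentially automatic.
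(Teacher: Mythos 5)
Your proof is correct, but it is organized genuinely differently from the paper's. The paper proves ``infinitely modern $\Rightarrow \id(P)\leqslant\ir(P)$'' by an informal collision argument (the unit increasing relation at $\ir(P)$ shifts rightward under successive rises until it contradicts the fixed unit decreasing relation at $\id(P)$), and it proves the converse by invoking Lemma \ref{inf}: if $P$ is not infinitely modern there are $w<x<y<z$ with $w\lhd x$ and $z\lhd y$, whence unit relations $x-1\lhd x$ and $y+1\lhd y$, so $\ir(P)<\id(P)$. You never use Lemma \ref{inf}; instead you make the recursion $\ir(\Ri(Q))=\ir(Q)+1$, $\id(\Ri(Q))=\id(Q)$ the engine of both directions, coupled with your local dictionary ($Q$ not modern $\Rightarrow \id(Q)\geqslant\ir(Q)+2$; $\id(Q)=\ir(Q)+2\Rightarrow Q$ not modern; $\id(Q)=\ir(Q)+1$ impossible by antisymmetry) and Lemma \ref{montee}. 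What your route buys: it is self-contained (only Lemma \ref{montee}, the small-relation Remark and the definitions), it turns the paper's informal ``after some risings the relations will contradict'' into a precise statement --- if all earlier rises are interval-posets, the failure occurs exactly at stage $\id(P)-\ir(P)-1$ --- and it explicitly handles the boundary case $\id=\ir+1$, which the paper never addresses. What the paper's route buys: brevity, since Lemma \ref{inf} (itself proved by the same rise-tracking idea) already packages the four-point obstruction; your dictionary-plus-induction in effect re-proves the content of Lemma \ref{inf} without naming it.
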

\begin{proof}
If $\ir(P) < \id(P)$, then the poset is not infinitely-modern because after some risings, the relation $k\lhd k+1$ will contradict the relation $i\lhd i-1$. Conversely, if the poset is not infinitely modern, by Lemma \ref{inf}, there are integers $w < x < y < z$ such that $w\lhd x$ and $z\lhd y$. By the interval-poset condition, we have relations $x-1\lhd x$ and $y+1\lhd y$. In particular, we see that $\ir(P) < \id(P)$. 
\end{proof}

We denote by $E(n,i,k)$ the set of infinitely modern interval-posets $P$ of size $n$ such that $\ir(P) = k$ and $\id(P)=i$. 

Let $1 \leqslant i \leqslant k \leqslant n+1$ and $P$ be an interval-poset of size $n$. Then, we define a relation $f_{i,k}(P)$ on the set with $n+1$ elements by adding a new point to the set of $P$. For the increasing relations we can think that the new point is inserted at $k$ and we add a new increasing relation from $k$ to $k+1$. The increasing relations of $P$ are shifted by $1$ accordingly to the new point. For the decreasing relations the new point is inserted at the position $i$. A new relation $i\lhd i-1$ is added and the decreasing relations of $P$ are shifted by $-1$ accordingly to the new point. More formally, $f_{i,k}(P)$ is defined as the set $\{1,2,\cdots, n+1\}$ with the relation $\lhd'$: 
\begin{itemize}
\item We have $k \lhd' k+1$ and $i\lhd' i+1$ with the convention that there are no increasing relations when $k=n+1$ and no decreasing relations when $i=1$.
\item Let us assume that we have an increasing relation $x\lhd y$ in $P$. If $x<y<k$, then we have the relation $x\lhd'y$ in $f_{i,k}(P)$. If $x < k \leqslant y$, then we have the relation $x\lhd' y+1$ in $f_{i,k}(P)$.  If $k \leqslant x < y$, then we have the relation $x+1 \lhd' y+1$.
\item Let us assume that we have a decreasing relation $y \lhd x$ in $P$. If $i \leqslant x < y$, then we have the relation $y+1 \lhd'x+1$. If $x< i\leqslant y$, then we have the relation $y+1 \lhd x$. If $x<y <i$, then we have the relation $y\lhd x$. 
\item Take the transitive closure of the relation $\lhd'$. 
\end{itemize}
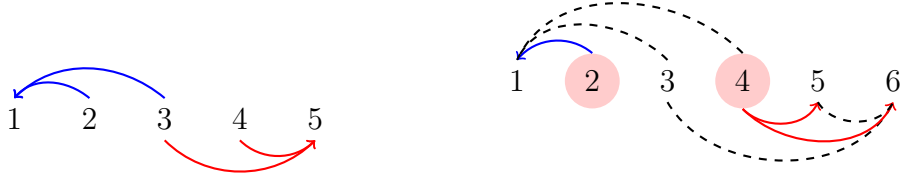
\begin{figure}
\centering
\begin{tikzpicture}
\node (1) at (0,0) {1}; \node (2) at (1,0) {2}; \node (3) at (2,0) {3}; \node (4) at (3,0) {4}; \node (5) at (4,0) {5};
\draw[red,->,below,thick] (4.south) to [out=-45,in=225] (5.south);
\draw[blue,thick,->] (2.north) to  [out=140,in=50] (1.north);
\draw[red,->,below, thick] (3.south) to [out=-45,in=225] (5.south);
\draw[blue,thick,->] (3.north) to [out =140 ,in =45] (1.north);
\end{tikzpicture}
\ \ \ \ \ \ \ \ \ \ \ \ \ \ \ 
\begin{tikzpicture}
\node (1) at (0,0) {1}; \node[fill=red!20, shape=circle] (2) at (1,0) {2}; \node (3) at (2,0) {3}; \node[fill=red!20, shape=circle] (4) at (3,0) {4}; \node (5) at (4,0) {5}; \node (6) at (5,0) {6};
\draw[thick,blue,->] (2.north) to  [out=140,in=50] (1.north);
\draw[dashed,below, thick] (3.south) to [out=-60,in=-115] (6.south);
\draw[red,->,below,thick] (4.south) to [out=-45,in=-115](5.south);
\draw[red,->,below,thick] (4.south) to [out=-45,in=-115](6.south);
\draw[dashed,below, thick] (5.south) to [out=-60,in=-115] (6.south);
\draw[dashed,above,thick] (4.north) to [out=135,in=65](1.north);
\draw[dashed,above,thick] (3.north) to [out=135,in=65](1.north);
\end{tikzpicture}
\caption{On the left, an interval-poset $P$ of size $5$. On the right, the construction $f_{2,4}(P)$. The vertices in red represent the positions of the new arrows which are displayed in thick red and blue. The black dashed corresponds to the relations of $P$. The long red arrow is obtained by transitivity.}
\end{figure}
\begin{lemma}\label{double}
Let $1\leqslant i \leqslant k \leqslant n+1$. Let $i'\leqslant i$ and $k-1\leqslant k'$. Let $P \in E(n,i',k')$. Then, $f_{i,k}(P)$ is an interval-poset of size $n+1$ in $E(n+1,i,k)$. 
\end{lemma}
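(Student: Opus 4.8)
The statement bundles three claims: that $f_{i,k}(P)$ is an interval-poset of size $n+1$, that $\id\!\big(f_{i,k}(P)\big)=i$ and $\ir\!\big(f_{i,k}(P)\big)=k$, and that it is infinitely modern. The plan is to prove the first two by hand and get the third for free: once $f_{i,k}(P)$ is known to be an interval-poset with $\id\leqslant\ir$ (which here reads $i\leqslant k$, true by hypothesis), Proposition \ref{stat} immediately yields that it is infinitely modern. So everything reduces to the interval-poset property and the two statistics.

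The engine of the argument is a pair of structural facts about $P$ extracted from the hypotheses. Since $\ir(P)=k'\geqslant k-1$, the interval-poset condition $(1)$ applied to any increasing relation $x\lhd y$ of $P$ produces the consecutive relation $y-1\lhd y$, whence $y-1\geqslant k'\geqslant k-1$ and thus $y\geqslant k$: every increasing relation of $P$ has its larger endpoint $\geqslant k$. Dually, since $\id(P)=i'\leqslant i$, condition $(2)$ forces the smaller endpoint of every decreasing relation of $P$ to be $\leqslant i'-1<i$. Consequently, under the increasing relabeling (identity below $k$, shift by $1$ from $k$ on) the larger endpoint of each increasing relation is always shifted, while under the decreasing relabeling (identity below $i$, shift by $1$ from $i$ on) the smaller endpoint of each decreasing relation is always fixed; this is exactly what keeps the two relabelings from interfering.

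With these facts I would first check that $\lhd'$ is a partial order. Transitivity holds by construction, so the point is antisymmetry. A putative $2$-cycle $a\lhd' b$, $b\lhd' a$ with $a<b$ forces an increasing and a decreasing relation of $P$ to be identified after relabeling; tracing the increasing leg through the increasing relabeling and the decreasing leg through the decreasing relabeling and invoking the two facts shows this would require either an increasing relation of $P$ with larger endpoint $<k$ or a decreasing relation with smaller endpoint $\geqslant i$, both impossible. The only elements genuinely ``split'' by the construction, namely an element of $[i,k)$ that is at once the smaller endpoint of an increasing relation and the larger endpoint of a decreasing relation, are sent to the two distinct points $p$ and $p+1$, which do not close up into a cycle. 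The interval-poset conditions are then transported from $P$: for the image of an increasing relation $x\lhd y$ of $P$, condition $(1)$ follows from condition $(1)$ in $P$, the only new integer possibly lying strictly between the relabeled endpoints being $k$, and the inserted relation $k\lhd' k+1$ together with condition $(1)$ in $P$ fills that gap; condition $(2)$ is the mirror image, handled by the inserted decreasing relation at $i$.

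Finally the statistics. The inserted relation $k\lhd' k+1$ gives $\ir\!\big(f_{i,k}(P)\big)\leqslant k$, and the first structural fact shows that no consecutive increasing relation inherited from $P$ can begin before $k$, so equality holds; symmetrically the inserted relation at $i$ and the second structural fact give $\id\!\big(f_{i,k}(P)\big)=i$. I expect the main obstacle to be precisely the antisymmetry and interval-poset bookkeeping in the region $[i,k)$, where the two relabelings disagree and elements may split: this is exactly where the hypotheses $i'\leqslant i$ and $k-1\leqslant k'$ are indispensable (they force increasing endpoints to the right of $k$ and decreasing endpoints to the left of $i$, so that the inserted consecutive relations stay clear of the relations already present), while everything outside this region is a routine shift along the two relabelings.
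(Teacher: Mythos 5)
Your proposal is correct and takes essentially the same route as the paper: the same two localization facts (every increasing relation of $P$ has larger endpoint $\geqslant k$, every decreasing relation has smaller endpoint $<i$, both extracted from $\ir(P)=k'$, $\id(P)=i'$ via the interval-poset condition) drive the antisymmetry check, the case-by-case transport of the interval-poset condition past the inserted points, and the computation $\ir\big(f_{i,k}(P)\big)=k$, $\id\big(f_{i,k}(P)\big)=i$. The only difference is cosmetic: you invoke Proposition \ref{stat} explicitly to conclude that the result is infinitely modern, a step the paper leaves implicit.
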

\begin{proof}
If we have a decreasing relation $ y \lhd x$ in $P$, by the interval poset condition, we have also have a relation $x+1 \lhd x$. This implies that, in $P$ all the decreasing relations are of the form $y\lhd x$ where $x < y$ and $x < i'$. Since $i'\leqslant i$, in $f_{i,k}(P)$ all the decreasing relations are of the form $y'\lhd x'$ where $x' < i$. Moreover, we have a decreasing relation $i \lhd i-1$ in $f_{i,k}(P)$. In other terms, we have $\id(f_{i,k}(P)) = i$.

Similarly, in $P$ all the increasing relations are of the form $x \lhd y$ with $x < y$ and $k'+1 \leqslant y$. Since $k\leqslant k'+1$, all increasing relations in $f_{i,k}(P)$ are of the form $x' \lhd y'$ where $k < y'$. By construction in $f_{i,k}(P)$, we have the relation $k\lhd k+1$. So, $\ir(f_{i,k}(P)) = k$.

It remains to check that under the hypothesis $f_{i,k}(P)$ is an interval-poset. Let $x< y$ such that $x\lhd y$ and $y \lhd x$ in $f_{i,k}(P)$. Since the increasing relations land after $k$ and the decreasing before $i$, the only possibility is to have $x < i < k < y$. This means that in $P$, we have a relation $x \lhd y-1$ and $y\lhd y-1$. This is not possible since $P$ is a poset. Since the relation $\lhd'$ is transitive by construction, this shows that $f_{i,k}(P)$ is a poset. 

We need to check the interval-poset condition. It is an easy case by case checking: let $x<y < z$ and $x\lhd z$ in $f_{i,k}(P)$. If $x < k < y$, then in $P$ we have the relation $x \lhd y-1$. If $k\neq z$, since $P$ is an interval-poset, we have the relation $z' \lhd y-1$ for $z'=z$ if $z'<k$ and $z' = z-1$ otherwise. So in $f_{i,k}(P)$, we have $z\lhd y$. If $z=k$, then in $f_{i,k}(P)$ we have the relation $k\lhd k+1$. By the interval-poset condition of $P$ we have $k\lhd y-1$. It becomes $k+1 \lhd y$ in $f_{i,k}(P)$. By transitivity we have $k\lhd y$. Similarly, we can check the case where $k \leqslant x \leqslant y$. The case of decreasing relations is also similar. 
\end{proof}
On the other hand, if $P$ is an interval-poset in $E(n+1,i,k)$ let us construct $\rho(P)$ an interval-poset of size $n$. Informally, for the increasing relations, we remove the vertex $k$ and the relation $k\lhd k+1$. We shift the other relations accordingly to their position. For the decreasing relations, we remove the vertex $i$ and the relation $i\lhd i-1$. And we shift the relations accordingly to their position. More formally, $\rho(P)$ is the relation on the set $\{1,2,\cdots, n\}$ defined by:
\begin{itemize}
\item Let $x < y$. Then we have a relation $x\lhd y$ in the following two cases: if $x < k < y+1$ and there is a relation $x\lhd y+1$ in $P$, or if $k < x+1 < y+1$ and there is a relation $x+1 \lhd y+1$ in $P$.
\item Let $x < y$. Then we have a relation $y\lhd x$ in the following two cases: if $x  < y  < i$ and there is a relation $y\lhd x$ in $P$ or if $x < i < y+1 $ and there is a relation $ y+1 \lhd x$ in $P$.
\end{itemize}
\begin{lemma}\label{coupe}
Let $P \in E(n+1,i,k)$. Then $\rho(P)$ is an infinitely modern interval-poset such that $\id(P) \leqslant i$ and $k-1 \leqslant \ir(P)$. 
\end{lemma}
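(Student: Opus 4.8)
The plan is to show that $\rho$ is essentially the inverse of the construction $f_{i,k}$ of Lemma \ref{double}, and then read off the stated properties from that relationship together with Proposition \ref{stat}. More precisely, I would first verify directly that $\rho(P)$ is a genuine poset on $\{1,\dots,n\}$: transitivity and antisymmetry must be checked, but the key point is that deleting the vertex $k$ from the increasing relations and the vertex $i$ from the decreasing relations cannot create a contradictory pair. Since $P \in E(n+1,i,k)$, Proposition \ref{stat} gives $\id(P) = i \leqslant k = \ir(P)$, so in $P$ all decreasing relations $y \lhd x$ already satisfy $x < i$ and all increasing relations $x \lhd y$ already satisfy $k < y$; removing the single vertex at position $i$ (for decreasing) and position $k$ (for increasing) keeps these two families separated by the gap $[i,k]$, so no new relation $x \lhd y$ together with $y \lhd x$ can appear in $\rho(P)$.

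Next I would check the interval-poset condition for $\rho(P)$. This is a case analysis on the position of the intermediate integer $b$ relative to the deleted positions $i$ and $k$: given $a \lhd_{\rho} c$ one lifts this to the corresponding relation in $P$ (via the shift described in the definition of $\rho$), applies the interval-poset condition of $P$ to the lifted intermediate value, and then pushes the resulting relation back down through $\rho$. The only subtlety is bookkeeping the index shifts correctly depending on whether $b$, once lifted, falls before or after the removed vertex; but since the removed vertices $i$ and $k$ lie strictly outside the ranges where decreasing resp.\ increasing relations of $P$ can terminate, the lifted intermediate value never coincides with a removed vertex, so the condition transfers cleanly.

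To obtain the conclusion $\id(\rho(P)) \leqslant i$ and $k-1 \leqslant \ir(\rho(P))$, I would argue as in the proof of Lemma \ref{double}: every decreasing relation of $\rho(P)$ comes from a decreasing relation of $P$ (which lands strictly before $i$), and after deleting the vertex at position $i$ the largest index from which a decreasing relation starts can only decrease, giving $\id(\rho(P)) \leqslant i$; dually, every increasing relation of $\rho(P)$ comes from an increasing relation of $P$ landing after $k$, and after deleting the vertex at position $k$ the smallest index $j$ with $j \lhd j+1$ satisfies $k-1 \leqslant \ir(\rho(P))$. Finally, infinite modernity of $\rho(P)$ follows immediately from Proposition \ref{stat}, since $\id(\rho(P)) \leqslant i \leqslant k-1+1 = k$ combined with $k-1 \leqslant \ir(\rho(P))$ needs one more comparison; the cleanest route is to note that $\rho$ and $f_{i,k}$ are mutually inverse on the relevant sets, so $\rho(P)$ inherits infinite modernity from $P$ directly.

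The main obstacle I anticipate is the index-shift bookkeeping in the interval-poset verification: because increasing and decreasing relations are shifted by $+1$ and $-1$ respectively around \emph{different} deleted positions $i$ and $k$, one must be careful that the intermediate vertex $b$ guaranteed by the interval-poset condition of $P$ translates to the correct vertex in $\rho(P)$ in each of the several sub-cases. Everything else is a routine consequence of $P$ being a poset and of the separation $\id(P) \leqslant \ir(P)$ forced by $P \in E(n+1,i,k)$.
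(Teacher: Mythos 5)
Most of your proof tracks the paper's own argument: the antisymmetry check by lifting a contradictory pair of $\rho(P)$ back to $P$ (using that decreasing relations of $\rho(P)$ end at some $x < i$ while increasing ones end at some $y \geqslant k$), the case analysis for the interval-poset condition, and the two bounds $\id(\rho(P)) \leqslant i$ and $k-1 \leqslant \ir(\rho(P))$ are all correct and done the same way. The genuine gap is in the last step, which is exactly the delicate point of this lemma: infinite modernity of $\rho(P)$. You correctly notice that your two inequalities do not suffice --- they leave open the case $\id(\rho(P)) = i = k$ and $\ir(\rho(P)) = k-1$, which would violate Proposition \ref{stat} --- but your proposed fix, that ``$\rho$ and $f_{i,k}$ are mutually inverse on the relevant sets, so $\rho(P)$ inherits infinite modernity from $P$ directly,'' is circular. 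The mutual inverseness between $E(n+1,i,k)$ and $\bigcup E(n,i',k')$ is Proposition \ref{prop_calc}, which is proved \emph{from} the present lemma together with Lemma \ref{double}. Even the bare identity $f_{i,k}(\rho(P)) = P$, which one could verify by hand, does not help: Lemma \ref{double} only tells you where $f_{i,k}$ sends posets \emph{already known} to be infinitely modern, so you cannot pull the property back through $f_{i,k}$ without assuming the very thing to be proved; and ``inherits directly'' is not an argument, since infinite modernity is a statement about all iterated rises of $\rho(P)$.

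The missing step is short and is really an antisymmetry argument; any of the following closes it. The paper's route: show that $\ir(\rho(P)) = k-1$ forces the relation $k-1 \lhd k+1$ in $P$, and that $\id(\rho(P)) = i$ forces $i+1 \lhd i-1$ in $P$; if $i = k$ these two relations read $k-1 \lhd k+1$ and $k+1 \lhd k-1$, contradicting antisymmetry of $P$, so the bad case cannot occur. Alternatively, stay inside $\rho(P)$: if $\id(\rho(P)) > \ir(\rho(P))$, then your inequalities together with $i \leqslant k$ force $\id(\rho(P)) = \ir(\rho(P)) + 1 = i = k$, i.e.\ $\rho(P)$ contains both $k \lhd k-1$ and $k-1 \lhd k$, contradicting the antisymmetry you already established. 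A third option is Lemma \ref{inf}: every increasing relation of $\rho(P)$ ends at some $y \geqslant k$ and every decreasing one at some $x < i \leqslant k$, so no configuration $w \lhd x$, $z \lhd y$ with $w < x < y < z$ can exist. Without one of these observations the case $i = k$ is simply not covered, and that case is the only reason this lemma requires care at all.
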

\begin{proof}

In $P$ the increasing relations are of the form $y \lhd x$ where $ k < x$. If we have the relation $k-1 \lhd k+1$ in $P$, then we have the relation $k-1 \lhd k$ in $\rho(P)$. Otherwise the second increasing relation $x\lhd x+1$ of length $1$ in $P$ (the one after $k\lhd k+1$) appears for $k+1 \leqslant k$. Here we use one more time the convention that there is an increasing relation starting at $n+1$ if there is no such relation. So in $\rho(P)$ the first increasing relation $x-1\lhd x$. So we have $k-1 \leqslant \ir(P)$, and $\ir(P) = k-1$ if and only if we have the relation $k-1\lhd k+1$ in $P$.

Similarly, we have $\id(P) \leqslant i$ and $\id(P) = i$ if and only if we have the relation $i-1\lhd i+1$ in $P$. 

Now, we check that $\rho(P)$ is an interval-poset. By the description of $\ir(\rho(P))$ and $\id(\rho(P))$, we deduce that if $x\lhd y$ is an increasing relation in $\rho(P)$, we have $k\leqslant y$. Similarly, if $y\lhd x$ is a decreasing relation we have $x < i $. 

Let $x < y$ such that $x\lhd y$ and $y \lhd x$ in $\rho(P)$. Then, we must have $x < i$ and $k\leqslant y$. So, the relation $x\lhd y$ comes from the relation $x \lhd y+1$ in $P$ and the relation $y\lhd x$ comes from the relation $y+1 \lhd x$ in $P$. Since $P$ is an interval-poset, this is not possible. Since in $P$ there are no increasing relations of the form $x\lhd k$ and no decreasing relations of the form $y\lhd i$, removing the relations $k\lhd k+1$ and $i\lhd i-1$ will not break the transitivity of the relation. Checking the interval-poset condition is straightforward and similar to the case of Lemma \ref{double}.

If $i < k$, as direct consequence of Proposition \ref{stat}, the interval-poset $\rho(P)$ is infinitely-modern. If $i = k$, we just have to check that it is not possible to have $\ir(\rho(P)) = k-1$ and $\id(\rho(P)) = i$. But this is a direct consequence of the above description of these two particular cases.

\end{proof}

\begin{proposition}\label{prop_calc}
Let $n \in \mathbb{N}$. Let $1\leqslant i \leqslant k \leqslant n+1$. Then, we have a bijection 
\[ f_{i,k} : \bigcup_{\substack{1\leqslant i' \leqslant i \\ k-1\leqslant k' \leqslant n}} E(n,i',k') \to E(n+1,i,k) .\]
\end{proposition}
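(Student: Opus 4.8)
The plan is to show that the map $\rho$ introduced just before the statement is a two-sided inverse of $f_{i,k}$, so that the two preceding lemmas supply well-definedness of both directions and only the two composition identities remain. For the forward direction, if $P$ lies in the union, say $P\in E(n,i',k')$ with $1\leqslant i'\leqslant i$ and $k-1\leqslant k'\leqslant n$, then Lemma \ref{double} asserts precisely that $f_{i,k}(P)\in E(n+1,i,k)$, so $f_{i,k}$ maps the source into the target. Conversely, for $Q\in E(n+1,i,k)$, Lemma \ref{coupe} gives that $\rho(Q)$ is an infinitely modern interval-poset of size $n$ with $\id(\rho(Q))\leqslant i$ and $k-1\leqslant \ir(\rho(Q))$; since $\ir\leqslant n$ by convention, putting $i'=\id(\rho(Q))$ and $k'=\ir(\rho(Q))$ exhibits $\rho(Q)$ as an element of $E(n,i',k')$ with $i'\leqslant i$ and $k-1\leqslant k'\leqslant n$, i.e.\ in the source union. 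Hence it suffices to verify $\rho\circ f_{i,k}=\mathrm{id}$ on the union and $f_{i,k}\circ\rho=\mathrm{id}$ on $E(n+1,i,k)$.

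I would check both identities by treating the increasing and the decreasing relations separately, which is legitimate since an interval-poset is recovered from the disjoint data of its increasing and decreasing relations (the Châtel–Pons description used in Theorem \ref{bij}). The decisive structural fact is the spatial separation forced by infinite modernity: by Proposition \ref{stat} together with the interval-poset condition, every decreasing relation $y\lhd x$ of a poset in the union satisfies $x<i'\leqslant i$, while every increasing relation $x\lhd y$ satisfies $y\geqslant k'+1\geqslant k$. Consequently $f_{i,k}$ acts on the increasing part as the insertion of a fresh point at position $k$ (shifting every index $\geqslant k$ up by one and adjoining the cover $k\lhd k+1$) and on the decreasing part as the insertion of a fresh point at position $i$ (shifting every index $\geqslant i$ up by one and adjoining $i\lhd i-1$); because no increasing relation descends to $k$ and no decreasing relation rises to $i$, these two insertions are independent. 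The map $\rho$ is, again on each part separately, the corresponding deletion of the point at $k$ (resp.\ at $i$) together with removal of the cover $k\lhd k+1$ (resp.\ $i\lhd i-1$). Insertion and deletion of a point at a fixed position are inverse operations on the index set, and the two adjoined covers are exactly the relations pinning down $\ir=k$ and $\id=i$ in the target, so they are created by $f_{i,k}$ and destroyed by $\rho$ and conversely. Matching the shifts against the three position regimes in the two definitions then yields both composites equal to the identity.

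The main obstacle is the interaction with transitive closure, which is built into the definition of $f_{i,k}$ but not visibly into $\rho$. Applying $f_{i,k}$ can manufacture new relations by chaining the adjoined cover $k\lhd k+1$ with pre-existing increasing relations (compare the long arrow in the example computing $f_{2,4}$), and dually on the decreasing side, and I must check that these transitively created relations neither survive as spurious relations after applying $\rho$ nor obstruct the recovery of a genuine relation of $P$. The key point to verify carefully is that a relation obtained by transitivity through the new point $k$ has its lower endpoint equal to $k$, and the defining inequalities of $\rho$ (which demand a lower endpoint strictly below $k$, or strictly above $k$ after shifting) exclude exactly such relations; symmetrically at $i$. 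Thus $\rho$ silently discards precisely the transitively added relations and restores $P$ intact, because $P$ was already transitively closed; dually, in $f_{i,k}\circ\rho$ the relations of $Q$ that passed through $k$ or $i$ are regenerated by the transitive-closure step of $f_{i,k}$, so nothing is lost. Carrying out this case analysis on the three regimes in the definitions of $f_{i,k}$ and $\rho$ establishes that the two maps are mutually inverse, and hence that $f_{i,k}$ is a bijection.
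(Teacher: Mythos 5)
Your overall route is the same as the paper's: Lemma \ref{double} and Lemma \ref{coupe} provide well-definedness of $f_{i,k}$ and $\rho$ between the two sides, and everything reduces to checking that the two maps are mutually inverse (a step the paper dismisses as clear). Your reduction to the two composite identities, and your structural observations (decreasing relations of a poset in the union have lower endpoint $<i'\leqslant i$, increasing relations have upper endpoint $\geqslant k'+1\geqslant k$), are correct.

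The gap is in the step you yourself single out as the key verification. The claim that every relation manufactured by the transitive closure ``has its lower endpoint equal to $k$'' (symmetrically at $i$), and hence is exactly what the defining inequalities of $\rho$ discard, is false: you only consider chains of the new cover $k\lhd' k+1$ with \emph{increasing} relations, but mixed chains also occur. Take $P\in E(4,2,3)$ with increasing relation $3\lhd 4$ and decreasing relations $4\lhd 1$, $3\lhd 1$, $2\lhd 1$, and $(i,k)=(2,4)$: the shift of the decreasing relation $4\lhd 1$ is $5\lhd' 1$, and chaining it with the new cover $4\lhd' 5$ creates the decreasing relation $4\lhd' 1$, whose lower endpoint is $1$, not $k$. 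Moreover $\rho$ does \emph{not} discard such a relation when $i<k$: its second decreasing clause (the case $x<i<y+1$ with $y+1=k$) converts $k\lhd' x$ into $k-1\lhd x$. What rescues the identity $\rho\circ f_{i,k}=\mathrm{id}$ is not your discarding dichotomy but the interval-poset condition on $P$: since $k\lhd x$ holds in $P$, so does $k-1\lhd x$, so the created relation coincides with the shift of a genuine relation of $P$ and nothing spurious survives. The same ingredient is needed for chains of a shifted increasing relation $a\lhd' b+1$ with a shifted decreasing relation $b+1\lhd' c$ (these shifts use \emph{different} index maps, bumping indices $\geqslant k$ versus indices $\geqslant i$), which create $a\lhd' c$ while the shift of the $P$-relation $a\lhd c$ is $a+1\lhd' c$ whenever $i\leqslant a<k$; again one must invoke the interval-poset condition (yielding $a-1\lhd c$ in $P$, or the genuinely discarded case $a=i$). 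Your proof never appeals to the interval-poset condition here --- ``because $P$ was already transitively closed'' is not the right reason --- so the case analysis as you describe it would not close; with that repair it does, and it fills in exactly what the paper leaves implicit.
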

\begin{proof}
By Lemma \ref{double} $f_{i,k}$ maps the left hand side to the right hand side, and by Lemma \ref{coupe}, the map $\rho$ goes from the right hand side to the left hand side. It is clear that $\rho$ and $f_{i,k}$ are two bijection inverse from each other. 
\end{proof}
\begin{theorem}\label{theo_inf}
Let $n\in \mathbb{N}$. Then, the number of infinitely modern interval-posets of size $n$ is $\frac{1}{2n+1} {{3n}\choose{n}}$.
\end{theorem}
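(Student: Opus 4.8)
The plan is to turn Proposition~\ref{prop_calc} into a numerical recurrence for the refined counts and then to identify the resulting triangular array with the one introduced by Aval. Write $e(n,i,k)=|E(n,i,k)|$, with the convention that $e(n,i,k)=0$ unless $1\leqslant i\leqslant k\leqslant n$. By Proposition~\ref{stat} the number of infinitely modern interval-posets of size $n$ is $\sum_{1\leqslant i\leqslant k\leqslant n}e(n,i,k)$, and Proposition~\ref{prop_calc}, together with the fact that $f_{i,k}$ is a bijection, gives
\[
e(n+1,i,k)=\sum_{i'=1}^{i}\ \sum_{k'=k-1}^{n} e(n,i',k'),\qquad 1\leqslant i\leqslant k\leqslant n+1,
\]
with initial condition $e(1,1,1)=1$. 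Changing variables to $a=i-1$ and $c=n-k$ and writing $e_n(a,c):=e(n,a+1,n-c)$, which is supported exactly on $a+c\leqslant n-1$ (reflecting the condition $i\leqslant k$ of Proposition~\ref{stat}), this recurrence becomes the clean two-dimensional prefix-sum relation
\[
e_{n+1}(a,c)=\sum_{a'=0}^{a}\ \sum_{c'=0}^{c} e_n(a',c')\quad (a+c\leqslant n),\qquad e_{n+1}(a,c)=0\ \ (a+c>n),
\]
starting from $e_1(0,0)=1$.

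I would then observe that these numbers, indexed by the pair $(a,c)$ inside the triangle $a+c\leqslant n-1$, are exactly the entries of the triangular decomposition of the Fuss--Catalan number $\frac{1}{2n+1}\binom{3n}{n}$ studied by Aval in \cite{aval}: the displayed recurrence together with the initial value $e_1(0,0)=1$ determines the whole array, so a direct induction identifies $e_n(a,c)$ with Aval's numbers term by term. The symmetry $e_n(a,c)=e_n(c,a)$, which one reads off from the left/right symmetry of Lemma~\ref{symmetry} via the interchange of $\ir$ and $\id$, is a useful consistency check, as is verifying the first total sums $\sum_{a,c}e_n(a,c)=1,1,3,12,55,\dots$ for $n=0,1,2,3,4$ by hand. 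Aval's main identity then states that the sum of the entries of this triangle is precisely $\frac{1}{2n+1}\binom{3n}{n}$, the number of ternary trees with $n$ inner vertices, and summing $e_n(a,c)$ over the whole triangle yields Theorem~\ref{theo_inf}.

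The subtle point, and the main obstacle, is the triangular truncation $e_{n+1}(a,c)=0$ for $a+c>n$: without it the recurrence would be mere iterated two-dimensional partial summation, producing the (incorrect) product $\binom{a+n-2}{a}\binom{c+n-2}{c}$ and total sums that are not Fuss--Catalan. It is exactly the diagonal cut-off along $a+c=n-1$ that forces the ternary numbers, and it is the nontrivial input that Aval's analysis supplies. If one prefers a self-contained argument, I would encode the array in the three-variable series $\sum_{n}\bigl(\sum_{a,c}e_n(a,c)\,x^{a}y^{c}\bigr)t^{n}$, translate the prefix-sum step into multiplication by $1/((1-x)(1-y))$, and impose the diagonal truncation by a kernel or reflection method, extracting the specialization $x=y=1$ by Lagrange inversion; carrying out this diagonal bookkeeping is precisely the delicate part, which is why routing through Aval's established identity is the cleaner path.
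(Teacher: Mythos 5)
Your proposal is correct and takes essentially the same route as the paper: it turns Proposition~\ref{prop_calc} into the prefix-sum recurrence for the counts refined by the double statistic $(\ir,\id)$, reindexes to the triangle $a+c\leqslant n-1$ exactly as the paper does with $B(n,k,l)=|E(n,k+1,n-l)|$, and concludes by identifying the array with Aval's triangular decomposition of $\frac{1}{2n+1}\binom{3n}{n}$ via his Proposition~2.1. Incidentally, your displayed recurrence is the corrected form of the one in the paper, whose inner summation bound reads $0\leqslant j\leqslant k$ where it should read $0\leqslant j\leqslant l$.
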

\begin{proof}
Let $k,l \in \{0,1,\cdots,n-1\}$. We set $B(n,k,l) = |E(n,k+1,n-l)|$. With the change of variables $x -1 = k$ and $ n-y = l$, this is the number of infinitely modern interval-posets of size $n$ with $\ir = y$ and $\id = x$. It is easy to check that we have $B(1,0,0) = 1$. By Lemma \ref{stat}, if $P$ is an interval-poset such that $\ir(P) < \id(P)$, then $P$ is not infinitely-modern. So, if $k+l \geqslant n$, we have $B(n,k,l) = 0$. Finally, if $k+ l < n$, then $1\leqslant x \leqslant y \leqslant n$ and Proposition \ref{prop_calc} implies 
\[B(n,k,l) = \sum_{0\leqslant i \leqslant k, 0\leqslant j \leqslant k}B(n-1,i,j). \]
We recognize the induction formula of Definition $2.1$ of \cite{aval}. The result follows from Proposition $2.1$ \cite{aval}. 
\end{proof}
\bibliographystyle{alpha}

\end{document}